\documentclass[11pt,a4paper]{article}

\usepackage[T1]{fontenc}
\usepackage[utf8]{inputenc}
\usepackage[english]{babel}

\usepackage{amsmath,amssymb,amsfonts,amsthm}
\usepackage{mathtools}
\usepackage{bbm}
\usepackage{listings}
\usepackage[font=footnotesize,labelfont=bf]{caption}
\usepackage{authblk}

\usepackage[a4paper,margin=3cm]{geometry}

\usepackage[colorlinks=true,linkcolor=blue,citecolor=blue,urlcolor=blue]{hyperref}
\newcommand{\field}[1]{\mathbb{#1}}
\newcommand{\N}{\field{N}}
\newcommand{\R}{\field{R}}

\DeclareMathOperator*{\argmax}{arg\,max}
\newtheorem{theorem}{Theorem}[section]
\newtheorem{lemma}[theorem]{Lemma}

\newtheorem{definition}[theorem]{Definition}
\newtheorem{remark}[theorem]{Remark}
\newtheorem{assumption}[theorem]{Assumption}
\newtheorem{algorithm}[theorem]{Algorithm}

\title{Data-Driven Subsampling Rates for Diffusion Parameter Estimation of SDEs}

\author[1]{Felix Lindner}
\author[2]{Andre Schmei\ss{}er}
\author[1]{Felipe Trolldenier}
\author[3]{Raimund Wegener}

\affil[1]{Universit\"at Kassel, Institut f\"ur Mathematik, Heinrich-Plett-Str. 40, D-34132 Kassel, Germany}
\affil[2]{Fraunhofer ITWM, Fraunhofer Platz 1, D-67663 Kaiserslautern, Germany}
\affil[3]{Petrusstr. 1, D-54292 Trier, Germany}

\date{}

\begin{document}

\maketitle

\begin{abstract}
We study the problem of diffusion parameter estimation for stochastic differential equation (SDE) models in scenarios where data and model are compatible only on specific scales that have yet to be determined. We introduce a simple and efficient method for selecting suitable rates at which given time series data should be subsampled in order to ensure that the statistical structure of the subsampled data is consistent with the behavior of the SDE model on an infinitesimal scale. Our approach is based on analyzing the statistics of the lengths of monotonically increasing or decreasing segments in the subsampled data sequence, which we refer to as monotone runs. As an analytical foundation, we prove for a large class of SDEs with additive noise that the lengths of monotone runs at an infinitesimal scale are approximately geometrically distributed with success probability $1/2$. This universal characterization is employed to derive an automated method for selecting appropriate subsampling rates for given time series data that is directly applicable in real-world scenarios and does not rely on an asymptotic framework of multiscale diffusions. The approach is demonstrated using an application from industrial mathematics concerning surrogate models for fiber lay-down curves in production processes of nonwoven textiles.
\end{abstract}

\vspace{0.5em}

\noindent\textbf{Keywords:} parameter estimation, stochastic differential equation, subsampling, multiscale data, multiscale diffusion, fiber lay-down

\noindent\textbf{AMS-Classification:} 60H10, 62M05, 62F99, 34E13

\section{Introduction}

Stochastic differential equation (SDE) models are often applied in contexts where data and model are compatible only on specific scales. Examples include molecular dynamics, ocean-atmosphere sciences and high frequency data in finance, see, e.g., \cite{Pavliotis2007,Monahan2011,Mykland2012}. 
In this article we consider SDEs with additive noise of the form
\begin{equation}
\label{eqGenSDEIntro}
\text{d}\mathbf{X}_t = \mathbf{F}(\mathbf{X}_t) \, \text{d}t + \mathbf{A} \, \text{d}\mathbf{W}_t, 
\end{equation}
$t \in \R_0^+ \coloneq [0,\infty)$, with  initial condition $\mathbf{X}_0 \in \R^d$, drift function $\mathbf{F} = (F_1,\ldots,F_d) \colon \R^d \rightarrow \R^d$, diffusion matrix $\mathbf{A} = (A_{ij})_{i,j} \in \R^{d \times m}$, and $m$-dimensional standard Brownian motion $\mathbf{W}_t = (W_t^{(1)}, \ldots, W_t^{(m)})$. Let $X_t$ denote a fixed component of the solution process $\mathbf{X}_t = (X_t^{(1)}, \ldots, X_t^{(d)})$, say $X_t = X_t^{(d)}$ for definiteness. By introducing $A \coloneq (\sum_{j = 1}^m A_{dj}^2)^{1/2}$, where we assume $A > 0$, we can represent $X_t$ in terms of a one-dimensional standard Brownian motion $W_t \coloneq A^{-1} \sum_{j = 1}^m A_{dj} W_t^{(j)}$, i.e.,
\begin{equation}
\label{eqRelevantSDEIntro}
\text{d}X_t = F(\mathbf{X}_t) \, \text{d}t + A \, \text{d} W_t,
\end{equation}
where $F \coloneq F_d$. Our goal is to estimate the diffusion constant $A$ based on available data. While the issue of data-model compatibility being restricted to specific scales is typically formalized in a framework of multiscale diffusions, the estimation method presented in this article does not assume such a setting and can be applied directly to real-world data. Nevertheless, we make use of a multiscale framework where appropriate to illustrate the method.

Multiscale diffusions consist of a fast and a slow variable representing small- and large-scale structures, respectively, with the separation of their characteristic time scales described by a parameter $\varepsilon$. In this framework, we consider the multiscale diffusion as a generator of synthetic data, while the coarse-grained limit for $\varepsilon \rightarrow 0$ of the slow variable is the model of interest given by \eqref{eqGenSDEIntro}. 
Typically, \textit{averaging} and \textit{homogenization} of SDEs are discussed in literature. For the homogenization setting, \cite{Pavliotis2007,Papavasiliou2009} have shown that classical estimators fail due to being asymptotically biased. Furthermore, the authors of the aforementioned studies have proven that, asymptotically as $\varepsilon \rightarrow 0$, the bias can by reduced by subsampling the data at an appropriate rate that depends on $\varepsilon$, the specific model, and the parameters to be estimated. The asymptotic results for Gaussian processes in \cite{Azencott2010,Azencott2011} also show the dependence of the asymptotic optimal subsampling rate on $\varepsilon$. More general results involving non-Gaussian processes can be found in \cite{Azencott2015}. However, even in those cases where the available data can be assumed to originate from a suitable multiscale diffusion, the scale separation parameter is typically unknown and therefore it is not possible to subsample appropriately without additional information. Because of this, different approaches that avoid subsampling can be found in literature. In \cite{Crommelin2011} an approach using eigenpairs of the diffusion operator and those of the conditional expectation operator is presented. An estimation procedure based on the Dynkin formula has been proposed in \cite{Krumscheid2013,Kalliadasis2015,Krumscheid2018}. The extrema quadratic variation estimator in \cite{Manikas2018,Manikas2019} is based on the quadratic variation induced by an extrema partition and is asymptotically unbiased for decreasing step sizes and $\varepsilon \rightarrow 0$ in the homogenization setting. 

In contrast, we suggest a data-driven method to identify an appropriate scale on which the statistical structure of given time series data is compatible with the behavior of the SDE model on an infinitesimal scale. For this, our approach draws on the original idea of subsampling and supplements it with an indicator that can be used to determine the range of suitable subsampling rates based on the data set at hand. Let $(X_{n \delta})_{n \in \N_0}$ be a discrete-time sample of the process defined by \eqref{eqRelevantSDEIntro} with step size $\delta > 0$. By \textit{monotone runs} we denote segments of consecutive increments $X_{n \delta} - X_{(n-1) \delta}$, $X_{(n+1) \delta} - X_{n \delta}, \ldots$ with identical signs. We show that the length of the first monotone run at infinitesimal scale is geometrically distributed with parameter $1/2$, i.e.,
\begin{equation*}
\lim_{\delta \rightarrow 0} P \bigl( \min \left\lbrace n \in \N: (X_{n \delta} - X_{(n-1) \delta}) (X_{(n+1) \delta} - X_{n \delta}) < 0 \right\rbrace = k \bigr) = \frac{1}{2^k},
\end{equation*}
$k \in \N$. Furthermore, under an ergodicity assumption, we prove that the mean length of monotone runs converges almost surely to $2$, which is the value we expect due to the mentioned convergence of the distribution. In order to find the subsampling rates for which given time series data is most likely to originate from the model \eqref{eqRelevantSDEIntro} at infinitesimal scale, our idea is to use the results on the asymptotic distribution by comparing them with corresponding characteristics for the lengths of monotone runs in various subsamples of the available data. As a suitable indicator, our main focus is on the mean length of monotone runs.

Let $x_{0}, x_{\Delta}, \ldots, x_{N \Delta}$, $N \in \N$, be time series data with constant step size $\Delta > 0$. We are looking for a subsampling factor $k\ll N$ such that the statistical properties of $(x_{n k \Delta})_{n \in \lbrace 0,\ldots,\lfloor N/k \rfloor \rbrace}$ match $(X_{n \delta})_{n \in \N_0}$ for small $\delta > 0$. For each fixed subsampling factor $k \ll N$, the given data set can be partitioned into $k$ disjoint subsets of subsampled data points with step size $k \Delta$, which together contain a total of $N+1-k$ increments. Let $M(k)$ be the total number of monotone runs in all $k$ subsets. Then the mean length of the monotone runs as a function of the subsampling factor $k$ is given by
\begin{equation*}
L(k) \coloneq \frac{N+1-k}{M(k)}.
\end{equation*}
For each subsampling factor $k$, the estimate
\begin{equation*}
\hat{A}(k) \coloneq \sqrt{\frac{\sum_{n=0}^{N-k} (x_{(n+k) \Delta}-x_{n \Delta})^2}{k \Delta (N+1-k)}}
\end{equation*}
for the diffusion constant $A$ can be calculated based on the increments of the $k$ subsets. Both quantities are plotted in dependence on the effective step size $k \Delta$ for some examples in Figure \ref{figIntroAAMon}. The curves of the estimated values for the diffusion constant $A$ can vary significantly, but typically drop for decreasing effective step sizes due to the multiscale effect described above. In any case, the curve of the mean length of monotone runs serves as an indicator for suitable step sizes $k \Delta$, as these are characterized by a mean length of $2$. In all three cases in Figure \ref{figIntroAAMon}, the combination of the two curves shows that $A$ would be underestimated without subsampling, while depending on the scenario, a different effective step size is appropriate. Since our method applies directly to data with fixed step size $\Delta$ and requires no information about $\varepsilon$, it is entirely data-driven and can be applied without relying on scale-separation limits.

\begin{figure}[t]
\begin{minipage}[h]{0.32\linewidth}
\begin{center}
\includegraphics[width=1\linewidth]{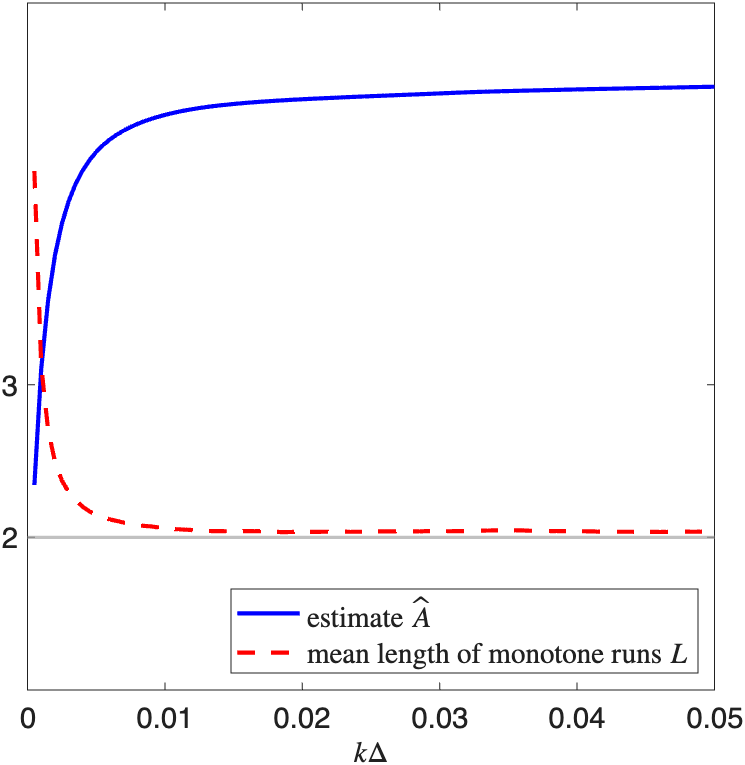} 
\end{center}
\end{minipage}
\hfill
\begin{minipage}[h]{0.32\linewidth}
\begin{center}
\includegraphics[width=1\linewidth]{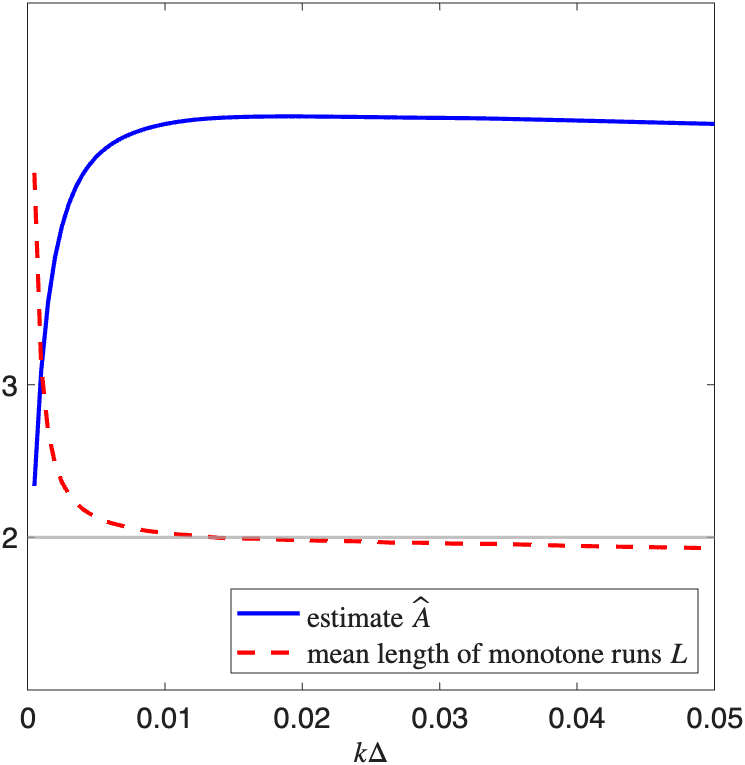} 
\end{center}
\end{minipage}
\hfill
\begin{minipage}[h]{0.32\linewidth}
\begin{center}
\includegraphics[width=1\linewidth]{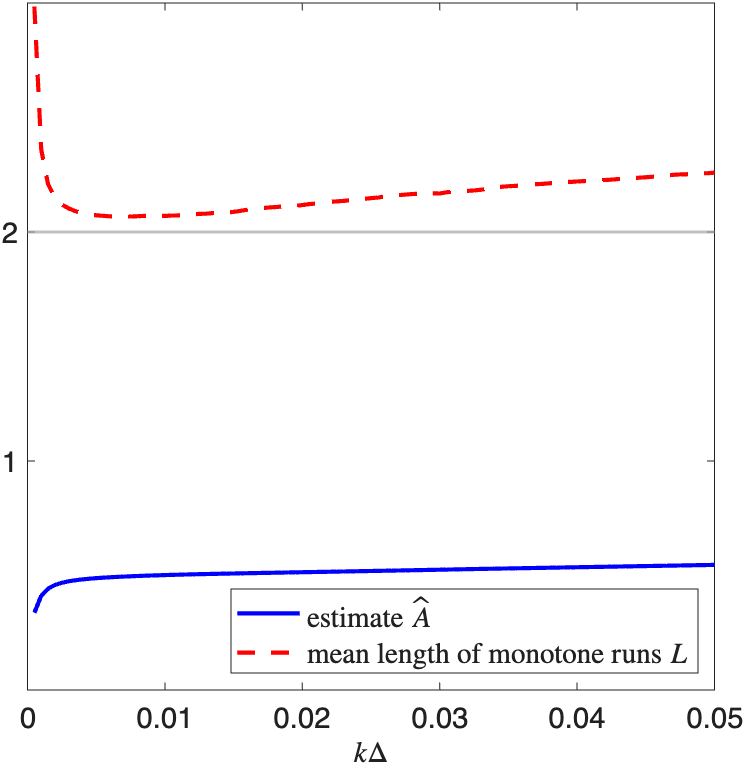} 
\end{center}
\end{minipage}
\caption{Three examples for the mean length of monotone runs and the estimate for the diffusion constant based on data with step size $\Delta$ in dependence on the effective step size $k \Delta$ when subsampling with factor $k$.}
\label{figIntroAAMon}
\end{figure}

For the demonstration of our method we use an example from industrial mathematics. In order to optimize the design of the production process of nonwoven webs of fibers, mathematical models of varying complexity are used for the simulation of fiber lay-down curves, for details see, e.g., \cite{Klar2009}. For the efficient simulation of a large number of such lay-down curves, an SDE model was introduced in \cite{Goetz2007}, for which an example of a simulated fiber can be found in Figure \ref{figIntroFiber}. To apply this model, its parameters must be estimated based on real-world data or simulated data coming from a computationally costly PDE model, both of which have been found to be compatible with the SDE only on specific scales. In addition, a suitable multiscale framework for this SDE model can be defined, similar to applications in molecular dynamics or ocean-atmosphere, for example.

\begin{figure}[ht]
\begin{minipage}[h]{0.48\linewidth}
\begin{center}
\includegraphics[width=1\linewidth]{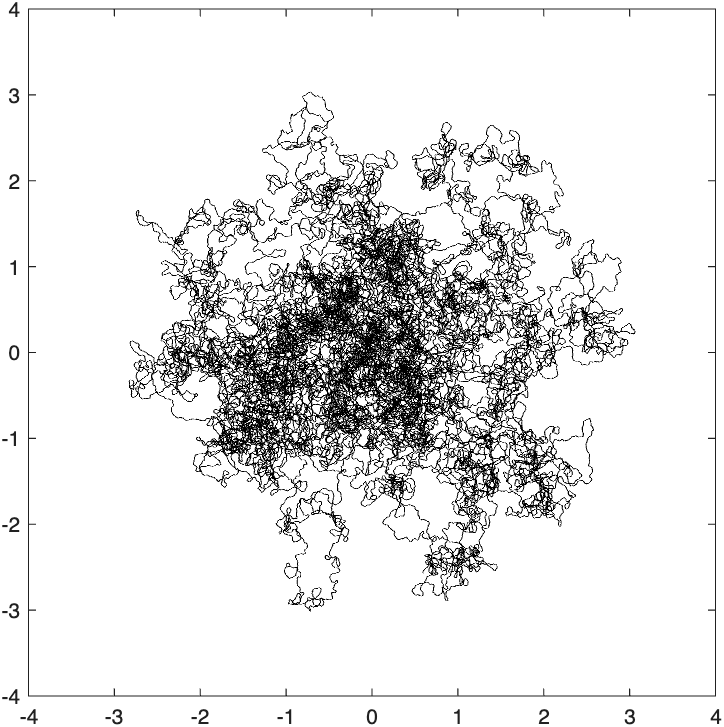} 
\end{center} 
\end{minipage}
\hfill
\begin{minipage}[h]{0.48\linewidth}
\begin{center}
\includegraphics[width=1\linewidth]{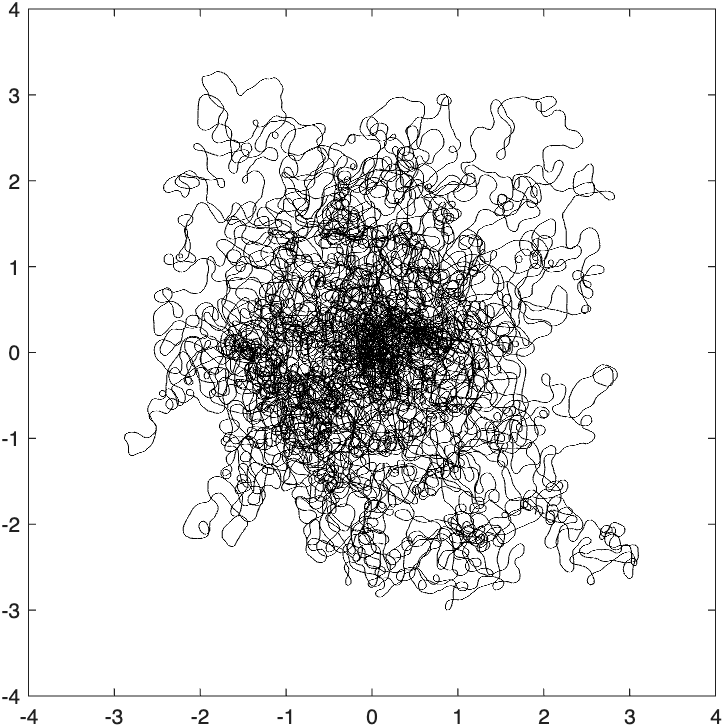} 
\end{center}
\end{minipage}
\caption{Fiber trajectory of \eqref{eqIntroBasicModel} (left) and \eqref{eqIntroSmoothModel} (right) for $V(x_1, x_2) = (x_1^2 + x_2^2)/2$, $A = 5$, $R = K = \sqrt{5}$, $v = 0.1$ and $\varepsilon = 0.2$.}
\label{figIntroFiber}
\end{figure}
The typical SDE model for the lay-down curves of filaments has the form 
\begin{align}
\text{d} \boldsymbol \xi_t &= \boldsymbol \tau (\alpha_t) \: \text{d}t + v \mathbf{e}_1 \: \text{d}t \nonumber \\
\text{d} \alpha_t &= - \nabla V(\boldsymbol \xi_t) \cdot \boldsymbol \tau^\bot (\alpha_t) \: \text{d}t + A \: \text{d} W_t,
\label{eqIntroBasicModel}
\end{align}
with initial conditions $\boldsymbol \xi_0 \in \R^2$ and $\alpha_0 \in [0, 2\pi]$. By $\boldsymbol\xi \colon \R_0^+ \times \Omega \rightarrow \R^2$ the fiber is modeled with respect to a reference curve specifying the movement of the conveyer belt. The normalized tangent on the fiber is described by $\boldsymbol\tau(\alpha) \coloneqq (\cos \alpha, \sin \alpha)$, so that $\boldsymbol\tau^{\bot}(\alpha) \coloneqq (-\sin \alpha, \cos \alpha)$ is the corresponding orthogonal polar unit vector. The potential $V \colon \R^2 \rightarrow \R$ sets the spread of the fiber. A typical choice is 
\begin{equation*}
V(\boldsymbol\xi) = \frac{1}{2} \boldsymbol\xi^\top \mathbf{C}^{-1} \boldsymbol\xi,
\end{equation*}
with positive-definite matrix $\mathbf{C} \in \R^{2 \times 2}$, giving a standard model for the buckling behavior. Turbulence effects in the production process are represented by a one-dimensional standard Brownian motion $W$ with diffusion constant $A \in (0, \infty) \eqcolon \R^+$. In this model, the conveyer belt moves with constant speed, which is set in relation to the production speed resulting in $v \in [0,1]$. Using the notation of \eqref{eqGenSDEIntro} and $\boldsymbol\xi_t = (\xi_t^{(1)}, \xi_t^{(2)})$, we have $\mathbf{X}_t = (\xi_t^{(1)}, \xi_t^{(2)}, \alpha_t)$,
\begin{equation*}
\mathbf{F}(\mathbf{x}) = \begin{pmatrix}
\cos(x_3) + v \\
\sin(x_3) \\
-\nabla V(x_1, x_2) \cdot (-\sin(x_3), \cos(x_3))^\top
\end{pmatrix},
\end{equation*}
$\mathbf{x} = (x_1, x_2, x_3)$, and $\mathbf{A} = \text{diag}(0,0,A)$.
In practical application, this model is adapted to given data by estimating the diffusion constant $A$ and the parameters of the potential $V$ while the speed ratio $v$ is known. 
In addition to the investigation of this real-world situation, we can also supplement the target model with a suitable multiscale diffusion, making it a typical example of homogenization. As shown in \cite{Herty2009}, the model \eqref{eqIntroBasicModel} can be considered as limit for $\varepsilon \rightarrow 0$ of
\begin{align}
\text{d} \boldsymbol\xi_t^{(\varepsilon)} &= \boldsymbol\tau(\alpha_t^{(\varepsilon)}) \: \text{d}t +v \mathbf{e}_1 \: \text{d}t \nonumber \\ 
\text{d} \alpha_t^{(\varepsilon)} &= -\nabla V(\boldsymbol\xi_t^{(\varepsilon)}) \cdot \boldsymbol\tau^\perp(\alpha_t^{(\varepsilon)}) \: \text{d}t + \frac{\kappa_t^{(\varepsilon)}}{\varepsilon} \: \text{d}t \nonumber \\
\text{d} \kappa_t^{(\varepsilon)} &= -\frac{1}{\varepsilon^2 R} \kappa_t^{(\varepsilon)} \: \text{d}t + \frac{K}{\varepsilon} \: \text{d} W_t, \label{eqIntroSmoothModel}
\end{align}
$t \in \R_0^+$, equipped with appropriate initial conditions $\boldsymbol\xi_0 \in \R^2$, $\alpha_0 \in [0, 2\pi]$ and $\kappa_0 \in \R$. Here, $R ,K\in \R^+$ are some constants. In addition to the process $\boldsymbol\xi$ describing the fiber with respect to a reference curve for the conveyer belt movement and the angles $\alpha$, this model is formulated in terms of the curvature $\kappa$ of the fiber. For the investigation of our method we can use an Euler approximation of \eqref{eqIntroSmoothModel} to generate data for which we know the true value of $A$, at least as a limit for $\varepsilon \rightarrow 0$, and which also has the incompatibility with \eqref{eqIntroBasicModel} on small scales typical for real-world data. An example of a fiber trajectory derived from an Euler approximation of \eqref{eqIntroSmoothModel} can be found in Figure \ref{figIntroFiber}.

The article is structured as follows. In Section \ref{SectionAnalyticalResults}, we deal with the analytical background of monotone runs of discrete-time samples in the SDE model \eqref{eqRelevantSDEIntro}. We show that the distribution of the length of the first monotone run for infinitesimal time steps is geometric with success probability $1/2$ and that, under an ergodicity assumption, the mean value of monotone runs along a path is a suitable approximation of the value 2 that we expect due to the asymptotic distribution. These main results are formulated in Subsection \ref{SubsectionMainresults} and the corresponding proofs can be found in Subsection \ref{SubsectionProofs}. We build on this theoretical foundation in Section \ref{SectionTheEstimationMethod}, where we describe our method in detail and show how we select a suitable scale based on available data and calculate an estimate for the diffusion constant of the model. Section \ref{SectionApplication} provides the application to the simulation of fiber lay-down in industrial nonwoven production. In Subsection \ref{SubsectionIntroApplication} we give an introduction to the models that are used in this context and their embedding into the previously established framework. For the numerical tests in \ref{SubsectionNumericalTests}, we generate synthetic data based on the homogenization setting for the stochastic fiber lay-down models. Thus, we know the limit of the true diffusion constant and can demonstrate how our method identifies suitable scales and provides good estimates for the diffusion constant. The complete procedure for the estimation of all parameters in the stochastic fiber lay-down model based on simulated fiber data is investigated in Subsection \ref{SubsectionCompleteProcedure}.

\section{Analytical results}
\label{SectionAnalyticalResults}

Consider the $d$-dimensional SDE model \eqref{eqGenSDEIntro}, i.e.,
\begin{equation*}
\text{d}\mathbf{X}_t = \mathbf{F}(\mathbf{X}_t) \, \text{d}t + \mathbf{A} \, \text{d}\mathbf{W}_t,
\end{equation*}
$t \in \R_0^+$, and its $d$-th component \eqref{eqRelevantSDEIntro}, i.e.,
\begin{equation*}
\text{d} X_t = F(\mathbf{X}_t) \, \text{d}t + A \, \text{d} W_t.
\end{equation*}

Throughout this paper, all considered random variables and stochastic processes are assumed to be defined on a common underlying probability space $(\Omega, \mathcal{A}, P)$. We specify and summarize our assumptions as follows.

\begin{assumption}
\label{Assumptions}
The drift function $\mathbf{F} = (F_1,\ldots,F_d) \colon \R^d \rightarrow \R^d$ in \eqref{eqGenSDEIntro} is locally Lipschitz continuous and such that its $d$-th component $F = F_d$ is of polynomial growth, i.e., there exist constants $C,p \in \R^+$ such that for all $\mathbf{x} \in \R^d$ it holds that
\begin{equation}
\label{eqPolynGrowthCond}
\vert F (\mathbf{x}) \vert \leq C (1 + \Vert \mathbf{x} \Vert^p).
\end{equation}
The diffusion matrix $\mathbf{A} = (A_{ij})_{ij} \in \R^{d \times m}$ in  \eqref{eqGenSDEIntro} is such that the constant $A = (\sum_{j = 1}^m A_{dj}^2)^{1/2}$ in \eqref{eqRelevantSDEIntro} is strictly positive. For every initial random variable $\mathbf{X}_0 \colon \Omega \rightarrow \R^d$ that is independent of the driving Brownian motion $\mathbf{W} = (\mathbf{W}_t)_{t \geq 0}$, there exists a stochastic process $\mathbf{X} = (\mathbf{X}_t)_{t \geq 0}$ with continuous sample paths that solves \eqref{eqGenSDEIntro} in the sense that for every $t \in \R_0^+$ it holds $P$-a.s.\ that
\begin{equation*}
\mathbf{X}_t  = \mathbf{X}_0 + \int_0^t \mathbf{F}(\mathbf{X}_s) \, \text{d}s + \mathbf{A} \, \mathbf{W}_t.
\end{equation*}
\end{assumption}

Recall that $X_t$ and $W_t$ in \eqref{eqRelevantSDEIntro} denote the $d$-th component of $\mathbf{X}_t$ and the one-dimensional Brownian motion given by $W_t = A^{-1} \sum_{j = 1}^m A_{dj} W_t^{(j)}$, respectively. In the special case of a deterministic initial condition $\mathbf{X}_0 = \mathbf{x}$ $P$-a.s., $\mathbf{x} \in \R^d$, we denote the associated solution process of \eqref{eqGenSDEIntro} by $\mathbf{X}^{\mathbf{x}} = (\mathbf{X}^{\mathbf{x}}_t)_{t \geq 0}$ and its $d$-th component by $X^{\mathbf{x}} = (X^{\mathbf{x}}_t)_{t \geq 0}$.

As described in the introduction, our approach is based on the investigation of monotone runs of discrete-time samples of the process $X$.
For step size $\delta >0$ we define the increments
\begin{equation*}
\Delta X_{j,\delta} \coloneq X_{j\delta} - X_{(j-1)\delta},
\end{equation*}
$j\in \N$. Let 
\begin{equation*}
R_{1,\delta} (X) \coloneq \min\lbrace n \in \N: \Delta X_{n+1,\delta} \Delta X_{n,\delta} < 0 \rbrace,
\end{equation*}
where we set $\min \emptyset \coloneq \infty$, be the first occurrence time of a sign change in the increments. The subsequent sign changes are formalized analogously by defining the $j$-th occurrence time
\begin{equation*}
R_{j,\delta} (X) \coloneq \min\lbrace n \in \N, n > R_{j-1,\delta}(X): \Delta X_{n+1,\delta} \Delta X_{n,\delta} < 0 \rbrace,
\end{equation*}
$j \in \N_{\geq 2}$. We set the first recurrence time $T_{1,\delta} (X)$ to be equal to the first occurrence time. In other words, $T_{1,\delta} (X) = R_{1,\delta} (X)$ denotes the index of the first sign change in the present increments and therefore also the length of the first monotone run by counting the increments involved. The length of the $j$-th monotone run is then given by the $j$-th recurrence time
\begin{equation*}
T_{j, \delta}(X) \coloneq R_{j, \delta}(X) - R_{j-1, \delta}(X),
\end{equation*}
for $j \in \N_{\geq 2}$.
If it is clear from the context, we also write $\Delta X_j$, $R_j$ and $T_j$ instead of $\Delta X_{j,\delta}$, $R_{j,\delta}(X)$ and $T_{j,\delta}(X)$, respectively. 

\subsection{Probability and frequency distribution of the lengths of monotone runs}
\label{SubsectionMainresults}

Our first main result concerns the asymptotic distribution of the length of the first monotone run. 

\begin{theorem}[Probability distribution of the length of the first monotone run]
\label{Thm1stResult}
Suppose that Assumption \ref{Assumptions} holds, let $\mathbf{X}_0$ be an initial random variable independent of $\mathbf{W}$, and let $\mathbf{X} = (\mathbf{X}_t)_{t \geq 0}$ be the corresponding solution to \eqref{eqGenSDEIntro} with $d$-th component $X = (X_t)_{t \geq 0}$. Then the probability distribution of the length of the first monotone run of the sampled process $(X_{n\delta})_{n \in \N_0}$ converges to the geometric distribution with parameter $1/2$ as $\delta \rightarrow 0$ , i.e., for all $\ell \in \N \cup \lbrace \infty \rbrace$ it holds that
\begin{equation*}
\lim_{\delta \rightarrow 0} P( T_{1,\delta}(X) = \ell) = \frac{1}{2^\ell},
\end{equation*}
where we set $1/2^\infty \coloneq 0$.
\end{theorem}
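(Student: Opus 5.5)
The plan is to compare the sampled increments of $X$ with those of the pure Brownian part $AW$. On the event $\{T_{1,\delta}(X)=\ell\}$ for finite $\ell$, only the first $\ell+1$ increments enter. So it suffices to show that, for each fixed $n$, the sign vector $(\operatorname{sign}\Delta X_{1,\delta},\ldots,\operatorname{sign}\Delta X_{n,\delta})$ converges in distribution to a vector of i.i.d.\ symmetric signs. Given this, $P(T_{1,\delta}(X)=\ell)$ equals the probability that the first $\ell$ signs agree and the $(\ell+1)$-st differs. That probability converges to $2\cdot 2^{-(\ell+1)}=2^{-\ell}$. The case $\ell=\infty$ follows by complementation: $P(T_1=\infty)\le P(T_1>\ell)\to 2^{-\ell}$ for every $\ell$, so the limit is $0$. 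Zero increments, where the sign vanishes, occur with probability tending to $0$; this is shown below.

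Step 1 is to rescale. Set $\xi_{j,\delta}:=\Delta X_{j,\delta}/(A\sqrt\delta)$. Then
\begin{equation*}
\xi_{j,\delta}=Z_{j,\delta}+\frac{1}{A\sqrt\delta}\int_{(j-1)\delta}^{j\delta}F(\mathbf{X}_s)\,\text{d}s,
\qquad Z_{j,\delta}:=\frac{W_{j\delta}-W_{(j-1)\delta}}{\sqrt\delta}.
\end{equation*}
For every $\delta$, the variables $Z_{1,\delta},\ldots,Z_{n,\delta}$ are i.i.d.\ standard normal. The drift term is bounded by $A^{-1}\sqrt\delta\,\sup_{s\le n\delta}|F(\mathbf{X}_s)|$.

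Step 2 is to show that this drift correction tends to $0$ in probability, uniformly over $j\le n$. This is the step where care is needed, because Assumption~\ref{Assumptions} gives only local Lipschitz continuity and no moment bounds for $\mathbf{X}_0$, so I will not use $L^p$ estimates. Instead, I use path continuity. For $\delta\le 1$ we have $\sup_{s\le n\delta}|F(\mathbf{X}_s)|\le C(1+\sup_{s\le n}\|\mathbf{X}_s\|^p)$. The right-hand side is an a.s.\ finite random variable that does not depend on $\delta$, because sample paths are continuous on $[0,n]$. Multiplying by $\sqrt\delta$ therefore gives a.s.\ convergence to $0$. Consequently $\xi_{j,\delta}-Z_{j,\delta}\to0$ in probability, jointly in $j\le n$.

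Step 3 converts this into convergence of the sign pattern. Fix a target sign pattern $s\in\{\pm1\}^{n}$ and $\eta>0$. Let $B_\eta$ be the event that some $|Z_{j,\delta}|<\eta$. Its probability is at most $n\,P(|N(0,1)|<\eta)$, which does not depend on $\delta$ and tends to $0$ as $\eta\to0$. Outside $B_\eta$, and outside the event $\{\max_j|\xi_{j,\delta}-Z_{j,\delta}|\ge\eta\}$, the signs of $\xi_{j,\delta}$ and $Z_{j,\delta}$ agree. In particular, no increment is zero there. Hence
\begin{equation*}
\bigl|P(\operatorname{sign}\xi_{\cdot,\delta}=s)-2^{-n}\bigr|\le n\,P(|N(0,1)|<\eta)+P\Bigl(\max_j|\xi_{j,\delta}-Z_{j,\delta}|\ge\eta\Bigr).
\end{equation*}
I let $\delta\to0$ first and then $\eta\to0$. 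For finite $\ell$, the event $\{T_{1,\delta}=\ell\}$ is, up to a null-limit set, the union of the two sign patterns of length $\ell+1$ that are constant on the first $\ell$ entries and flip at entry $\ell+1$. This yields the limit $2^{-\ell}$, and the case $\ell=\infty$ follows as explained above.

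The only point where I expect friction is Step 2, specifically the lack of integrability of $\mathbf{X}_0$ and of the solution. The pathwise-supremum argument avoids this, since it needs only continuity of paths and the polynomial growth bound \eqref{eqPolynGrowthCond}. Independence of $\mathbf{X}_0$ from $\mathbf{W}$ is not even required for this theorem.
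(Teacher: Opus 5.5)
Your proof is correct, and it takes a genuinely different route from the paper.

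The paper goes through Lemma~\ref{Lemma1stResult}. It conditions successively on $\mathcal{F}_{j\delta}$ and uses the Markov property of the restarted solutions $\mathbf{X}^{\mathbf{X}_{\ell\delta},\ell\delta}$. This sandwiches $P(T_{1,\delta}(X)=\ell,\ \sup_{t\le(\ell+1)\delta}\Vert\mathbf{X}_t\Vert\le R)$ between $2\Phi(\mp\tfrac{\sqrt\delta}{A}\sup_{\Vert\mathbf{x}\Vert\le R}|F(\mathbf{x})|)^{\ell+1}$, up to localization errors. Then $R=\delta^{-\alpha}$ with $\alpha<(2p)^{-1}$, together with the growth condition, pushes the arguments of $\Phi$ to $0$. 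The case $\ell=\infty$ is handled by the same bounds with growing $\ell=\lfloor\delta^{-1}\rfloor$.

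You instead compare the rescaled increments pathwise with the i.i.d.\ Gaussians $Z_{j,\delta}$, needing only $\sqrt\delta\,\sup_{s\le n}|F(\mathbf{X}_s)|\to0$ a.s. You then turn this into convergence of the sign vector by the small-ball argument in $\eta$, and obtain $\ell=\infty$ by complementation from the finite case.

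Your route is shorter and uses less. It needs no conditional expectations and no Markov or flow representation. Your argument never uses the independence of $\mathbf{X}_0$ and $\mathbf{W}$, which the paper needs in its conditioning step. It does not even need \eqref{eqPolynGrowthCond}: $s\mapsto F(\mathbf{X}_s)$ is continuous on $[0,n]$ for merely continuous $F$, hence a.s.\ bounded, so your Step~2 goes through without the growth bound. It also identifies the limit law of the whole finite sign vector, not only of $T_{1,\delta}$.

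What the paper's product-form bounds buy in return is quantitative control that is geometric in $\ell$. Your error term $n\,P(|N(0,1)|<\eta)$ deteriorates as the number of increments grows, so it yields no tail estimate for $T_{1,\delta}$. For bounded $F$, the paper's method gives $P(T_{1,\delta}(X)=\ell)\le 2\Phi(\sqrt\delta\,\Vert F\Vert_\infty/A)^{\ell+1}$ uniformly in $\ell$. This is what allows dominated convergence for $\text{E}(T_{1,\delta}(X))\to2$, as remarked after the theorem.
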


Under additional assumptions on the SDE \eqref{eqGenSDEIntro}, it is possible to show that the expectation of the length of the first monotone run converges to $2$ as $\delta \rightarrow0$, i.e.,
\begin{equation*}
\lim_{\delta \rightarrow 0} \text{E}(T_{1,\delta}(X)) = 2.
\end{equation*}
For example, if $F$ is globally bounded, bounds of the form 
\begin{equation*}
2 \Phi\biggl( -\sqrt{\delta} \tfrac{\Vert F \Vert_\infty}{A} \biggr)^{\ell+1} \leq P(T_{1,\delta}(X) = \ell) \leq 2 \Phi\biggl( \sqrt{\delta} \tfrac{\Vert F \Vert_\infty}{A} \biggr)^{\ell+1},
\end{equation*} 
$\ell \in \N$, $\delta > 0$, can be derived directly using arguments similar to those in the proof of Lemma \ref{Lemma1stResult}. For the expected value of the length of the first monotone run, one can then apply Lebesgue's dominated convergence theorem. However, in the present article we aim to avoid restrictive assumptions on $\mathbf{F}$ and instead focus on an ergodic version of the result.

Under Assumption \ref{Assumptions}, we say that the SDE \eqref{eqGenSDEIntro} admits an invariant probability measure $\mu$ on $\R^d$, if the initial condition $\mathbf{X}_0 \sim \mu$ implies that the corresponding solution process $\mathbf{X} = (\mathbf{X}_t)_{t \geq 0}$ is stationary. In addition to assuming that SDE \eqref{eqGenSDEIntro} admits an invariant probability measure $\mu$, our second main result requires that $\mathbf{X}_0 \sim \mu$, and that each skeleton Markov chain $(\mathbf{X}_{n \delta})_{n \in \N_0}$, $\delta > 0$, is ergodic. The ergodicity assumption is equivalent to the assumption that
\begin{equation}
\label{eqErgoCond}
\lim_{N \rightarrow \infty} \frac{1}{N} \sum_{n=0}^{N-1} \mathbbm{1}_B(\mathbf{X}_{n\delta}) = \mu(B) \quad \text{in probability} \quad \text{for all } B \in \mathcal{B}(\R^d), \, \delta > 0,
\end{equation} 
see Lemma~\ref{LemmaErgodicity} in Appendix~\ref{SecAppendixErgodicity} for details. Furthermore, a sufficient condition for the ergodicity of the skeleton Markov chain $(\mathbf{X}_{n \delta})_{n \in \N_0}$ is its irreducibility, compare Lemma~\ref{LemmaIrreducibility}.

\begin{theorem}[Mean length of monotone runs in a single path]
\label{Thm2ndResult}
Suppose that Assumption~\ref{Assumptions} holds and that \eqref{eqGenSDEIntro} admits an invariant measure $\mu$. Let $\mathbf{X}_0 \sim \mu$ be independent of $\mathbf{W}$, let $\mathbf{X} = (\mathbf{X}_t)_{t \geq 0}$ be the corresponding solution process to \eqref{eqGenSDEIntro} with $d$-th component $X = (X_t)_{t \geq 0}$, and assume that the ergodicity condition \eqref{eqErgoCond} is fulfilled. Then it holds that
\begin{equation}
\label{eqStatementMainResult2}
\lim_{\delta \rightarrow 0} \lim_{J \rightarrow \infty} \frac{1}{J} \sum_{j=1}^J T_{j,\delta}(X) = 2 \quad P\text{-almost surely}.
\end{equation}
Furthermore, for $\mu$-almost all $\mathbf{x} \in \R^d$, \eqref{eqStatementMainResult2} also holds for the $d$-th component $X^{\mathbf{x}} = (X_t^{\mathbf{x}})_{t \geq 0}$ of the solution process $\mathbf{X}^{\mathbf{x}} = (\mathbf{X}_t^{\mathbf{x}})_{t \geq 0}$ with initial condition $\mathbf{X}_0^{\mathbf{x}} = \mathbf{x}$ $P$-a.s., in place of $X$.
\end{theorem}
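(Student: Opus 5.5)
For fixed $\delta>0$ I will turn the run lengths into an ergodic average over the skeleton chain. The reduction is the identity
\begin{equation*}
\frac{1}{J}\sum_{j=1}^J T_{j,\delta}(X) = \frac{R_{J,\delta}(X)}{J}.
\end{equation*}
So it suffices to show that $J/R_{J,\delta}$ converges. Equivalently, I will show that the frequency of sign changes among the first $N$ increments converges to a limit $p(\delta)$, and then prove $p(\delta)\to 1/2$ as $\delta\to 0$.

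First, for fixed $\delta$, consider the sequence $Y_n \coloneq (\mathbf{X}_{(n-1)\delta}, (\mathbf{W}_{(n-1)\delta+s}-\mathbf{W}_{(n-1)\delta})_{s\in[0,2\delta]})$, $n\in\N$. The indicator $\mathbbm{1}\{\Delta X_{n+1}\Delta X_n<0\}$ is a measurable functional $g(Y_n)$. To see this, solve the SDE on $[(n-1)\delta,(n+1)\delta]$ from $\mathbf{X}_{(n-1)\delta}$ with the given noise increment, which is legitimate by uniqueness from local Lipschitz continuity. Under $\mathbf{X}_0\sim\mu$ the sequence $(Y_n)$ is stationary. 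I will argue that ergodicity of the skeleton chain $(\mathbf{X}_{n\delta})$ in the sense of \eqref{eqErgoCond} transfers to ergodicity of the stationary process $(Y_n)$, and also of the shifted blocks of the Brownian increments. The reason is that $(\mathbf{X}_{n\delta},\text{noise on }[n\delta,(n+1)\delta])$ is a Markov chain whose invariant sets are determined by the first marginal. For this I will use Lemma~\ref{LemmaErgodicity}, whose content I take to be that \eqref{eqErgoCond} means the skeleton chain is ergodic. Birkhoff's theorem then gives, $P$-a.s.,
\begin{equation*}
\frac{1}{N}\sum_{n=1}^N \mathbbm{1}\{\Delta X_{n+1}\Delta X_n<0\} \to p(\delta)\coloneq P(\Delta X_2\Delta X_1<0).
\end{equation*}
If $p(\delta)>0$, then $R_J\to\infty$ a.s., and inverting along the subsequence $N=R_J$ yields $R_J/J\to 1/p(\delta)$ a.s.

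Second, I need $p(\delta)\to 1/2$. Note that $p(\delta) = P(T_{1,\delta}(X)=1)$. By Theorem~\ref{Thm1stResult} with $\ell=1$ this tends to $1/2$, and in particular $p(\delta)>0$ for small $\delta$. (Alternatively, apply the Gaussian increment comparison directly, since the Brownian part dominates at order $\sqrt\delta$ while the drift contributes $O(\delta)$ on events of high probability, by polynomial growth and continuity of paths.) Hence $\lim_{\delta\to0}1/p(\delta)=2$. Because the inner limit is a.s.\ the deterministic number $1/p(\delta)$, the outer limit in \eqref{eqStatementMainResult2} is deterministic, and the claim holds $P$-a.s. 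One caveat: the inner limit must hold simultaneously for all $\delta$ along which the outer limit is taken. I will handle this by taking the inner limit for each fixed $\delta$ and letting $\delta\to0$ through countably many values. If a full limit over all $\delta$ is needed, I will interpret the outer limit through the deterministic function $\delta\mapsto 1/p(\delta)$, which is legitimate since the inner limit equals it outside a $\delta$-dependent null set.

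For the statement with deterministic initial conditions, let $G$ be the event on which the inner limits exist and equal $1/p(\delta)$ for all $\delta$ in the chosen countable set. It is a measurable event in the path. Write $P(G)=\int_{\R^d} P(\mathbf{X}^{\mathbf{x}}\in G)\,\mu(d\mathbf{x})=1$, using independence of $\mathbf{X}_0$ and $\mathbf{W}$ together with the fact that $\mathbf{X}$ is a measurable function of $(\mathbf{X}_0,\mathbf{W})$. This gives $P(\mathbf{X}^{\mathbf{x}}\in G)=1$ for $\mu$-a.e.\ $\mathbf{x}$.

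I expect the main obstacle to be the first step, namely upgrading ergodicity of the skeleton chain to ergodicity of the enlarged stationary sequence $(Y_n)$, which involves the path between grid points and the two-step increments. I will first try to show that any shift-invariant event for $(Y_n)$ has conditional probability given $Y_1$ that is a bounded harmonic function of $\mathbf{X}_0$ for the skeleton chain. Ergodicity then forces this function to be constant, so the event has probability zero or one.
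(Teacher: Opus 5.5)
Your proof is correct and follows the paper's argument step for step: the identity $\frac{1}{J}\sum_{j=1}^J T_{j,\delta}(X)=R_{J,\delta}(X)/J$, Birkhoff's theorem for the sign-change frequency, inversion along the subsequence $N=R_{J,\delta}$, the limit $p(\delta)=P(T_{1,\delta}(X)=1)\to 1/2$ from Theorem~\ref{Thm1stResult}, and disintegration over $\mu$ for deterministic initial values (the paper's Lemma~\ref{LemmaAlmostSurelyPathspace}). Your caveat about $\delta$-dependent null sets, handled via countably many $\delta$, is in fact more careful than the paper, which passes over this point.

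The one thing to drop is the enlargement to $(Y_n)$ with Brownian increments. Since $\Delta X_{n,\delta}$ involves only grid values, the indicator equals $f(\mathbf{X}_{(n-1)\delta},\mathbf{X}_{n\delta},\mathbf{X}_{(n+1)\delta})$ for a measurable $f$. So the indicator sequence is a factor of the stationary ergodic skeleton chain and hence is itself stationary and ergodic, which is exactly what the paper uses. The step you expected to be the main obstacle therefore never arises.
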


\begin{remark}[Remaining monotone run length]
\label{RemarkRemainingMonLength}
An alternative approximation of the expected value of the length of the first monotone run is given by the mean of the \textnormal{remaining} monotone run lengths. That is, instead of counting the numbers of consecutive increments having the same sign, one counts the number of remaining increments until the next change of signs appears starting from every single data point (e.g., counting a monotone run length of 3 is equivalent to adding up remaining run lengths of 3, 2 and 1). More formally, we define the $n$-th remaining run length by
\begin{equation*}
\varphi(\mathbf{X}_{(n-1)\delta}, \mathbf{X}_{n \delta}, \ldots) \coloneq \min \lbrace \ell \in \N_{\geq n}: \Delta X_{\ell+1,\delta} \Delta X_{\ell,\delta} < 0 \rbrace,
\end{equation*}
which in the setting of ergodic Markov chains $(\mathbf{X}_{n \delta})_{n \in \N_0}$ almost surely takes values in $\N$, see Lemma \ref{LemmaFiniteOccurrenceTimes}.
Then the mapping $\varphi: (\R^d)^\N \to \N_0$ is measurable and a direct consequence of the ergodicity is
\begin{equation*}
\lim_{N \rightarrow \infty} \frac{1}{N} \sum_{n=1}^N \varphi(\mathbf{X}_{(n-1)\delta}, \mathbf{X}_{n \delta}, \ldots) = \text{E}(\varphi(\mathbf{X}_{0}, \mathbf{X}_{\delta}, \ldots)) = \text{E}(T_{1,\delta}(X))
\end{equation*}
for all $\delta > 0$, see, e.g., \cite[Proposition 4.3]{Krengel1985}. Since the length of monotone runs is the more intuitive quantity and the approach of remaining monotone run lengths has not shown any advantages in numerical tests, we will continue to use the former perspective.
\end{remark}

\subsection{Derivation of the analytical results}
\label{SubsectionProofs}

To prove our main results, we begin with a lemma that gives lower and upper bounds for  probabilities of the form $P(T_{1,\delta}(X) = \ell, \sup_{t \in [0, (\ell+1) \delta]} \Vert \mathbf{X}_t \Vert \geq R)$ and $P(T_{1,\delta}(X) \geq \ell, \sup_{t \in [0, \ell \delta]} \Vert \mathbf{X}_t \Vert \geq R)$, where $\delta, R > 0$, for solution processes $\mathbf{X}$ in the sense of Assumption~\ref{Assumptions}. These bounds do not require $F$ to satisfy a polynomial growth condition. However, this assumption is used in combination with the bounds in the proof of Theorem \ref{Thm1stResult}.

For the proof of our second main result, we first show that, in the case of ergodicity, almost surely there are infinitely many occurrence times $R_{j, \delta}(X)$, or equivalently, $(X_{n \delta})_{n \in \N_0}$ has infinitely many monotone runs, for $\delta > 0$ sufficiently small. Building on this, we can finally prove Theorem \ref{Thm2ndResult}.

\begin{lemma}
\label{Lemma1stResult}
Suppose that Assumption \ref{Assumptions} holds, let $\mathbf{X}_0$ be an initial random variable independent of $\mathbf{W}$, and let $\mathbf{X} = (\mathbf{X}_t)_{t \geq 0}$ be the corresponding solution to \eqref{eqGenSDEIntro} with $d$-th component $X = (X_t)_{t \geq 0}$. Then for all $\ell \in \N$, all $R > 0$ and all $\delta > 0$ it holds that
\begin{equation}
\label{eqLemma1stResult01}
\begin{aligned}
&2  \Phi\Bigl(- \tfrac{\sqrt{\delta}}{A} \sup_{\Vert \mathbf{x} \Vert \leq R} \vert F(\mathbf{x}) \vert \Bigr)^{\ell+1} - 2 \sum_{j = 1}^\ell \Phi\Bigl(-\tfrac{\sqrt{\delta}}{A} \sup_{\Vert \mathbf{x} \Vert \leq R} \vert F(\mathbf{x}) \vert\Bigr)^j P \Bigl(\sup_{t \in I_{\ell-j}} \Vert \mathbf{X}_t \Vert > R \Bigr) \\
&\quad \leq P \Bigl(T_{1,\delta}(X) = \ell, \sup_{t \in [0, (\ell+1) \delta]} \Vert \mathbf{X}_t \Vert \leq R \Bigr) \\
&\quad \leq 2 \Phi\Bigl(\tfrac{\sqrt{\delta}}{A} \sup_{\Vert \mathbf{x} \Vert \leq R} \vert F(\mathbf{x}) \vert\Bigr)^{\ell+1}
\end{aligned}
\end{equation}
and
\begin{equation}
\label{eqLemma1stResult02}
\begin{aligned}
&2 \Phi\Bigl(-\tfrac{\sqrt{\delta}}{A} \sup_{\Vert \mathbf{x} \Vert \leq R} \vert F(\mathbf{x}) \vert\Bigr)^\ell - 2 \sum_{j = 1}^{\ell-1} \Phi\Bigl(-\tfrac{\sqrt{\delta}}{A} \sup_{\Vert \mathbf{x} \Vert \leq R} \vert F(\mathbf{x}) \vert\Bigr)^j P \Bigl(\sup_{t \in I_{\ell-j}} \Vert \mathbf{X}_t \Vert > R \Bigr) \\
&\quad \leq P \Bigl(T_{1,\delta}(X) \geq \ell, \sup_{t \in [0, \ell \delta]} \Vert \mathbf{X}_t \Vert \leq R \Bigr) \\
&\quad \leq 2 \Phi\Bigl(\tfrac{\sqrt{\delta}}{A} \sup_{\Vert \mathbf{x} \Vert \leq R} \vert F(\mathbf{x}) \vert\Bigr)^\ell,
\end{aligned}
\end{equation}
where $\Phi$ denotes the cumulative distribution function of the standard normal distribution and $I_1 \coloneq [0, \delta]$, $I_j \coloneq ((j-1) \delta, j \delta]$, $j = 2, \ldots, \ell+1$.
\end{lemma}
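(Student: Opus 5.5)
The plan is to write each increment as a Gaussian part plus a drift part that is controlled on the event where the path stays in the ball of radius $R$, and then to peel off one increment at a time by conditioning on the past. Fix $\ell,R,\delta$ and set $c \coloneq \frac{\sqrt{\delta}}{A}\sup_{\Vert\mathbf{x}\Vert\le R}\vert F(\mathbf{x})\vert$. For each $j$ write
\begin{equation*}
\Delta X_j = \int_{(j-1)\delta}^{j\delta} F(\mathbf{X}_s)\,\text{d}s + A\,\Delta W_j , \qquad \Delta W_j \coloneq W_{j\delta}-W_{(j-1)\delta}.
\end{equation*}
The normalized increments $Z_j \coloneq \Delta W_j/\sqrt{\delta}$ are i.i.d.\ standard normal. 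Each $Z_j$ is independent of $\mathcal{F}_{(j-1)\delta}$, where $\mathcal{F}_t$ is generated by $\mathbf{X}_0$ and $\mathbf{W}$ up to time $t$.

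On the event $\{\sup_{t\in I_j}\Vert\mathbf{X}_t\Vert\le R\}$, the drift part has absolute value at most $\delta\sup_{\Vert\mathbf{x}\Vert\le R}\vert F\vert$. This gives two sign implications. First, $\Delta X_j>0$ implies $Z_j>-c$. Second, $Z_j>c$ implies $\Delta X_j>0$. The same holds with all signs reversed. Both constants $\Phi(c)$ and $\Phi(-c)$ come from these implications.

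\textbf{Upper bound.} The event $\{T_1=\ell\}$ says the first $\ell$ increments have a common sign and the $(\ell+1)$-st has the opposite sign. By symmetry, split into the case "$+\cdots+-$" and its mirror image; this accounts for the factor $2$. Intersecting with the event that the path stays in the ball, the "$+$" case is contained in $\{Z_1>-c,\ldots,Z_\ell>-c,\,Z_{\ell+1}<c\}$. This event has probability $\Phi(c)^{\ell+1}$ by independence, and the mirror case gives the same, which yields the upper bound in \eqref{eqLemma1stResult01}. For \eqref{eqLemma1stResult02} only the first $\ell$ constraints are present, giving $2\Phi(c)^\ell$.

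\textbf{Lower bound.} The aim is to show that the event $\{T_1=\ell,\ \text{path in ball}\}$ contains
\begin{equation*}
\{Z_1>c,\ldots,Z_\ell>c,\ Z_{\ell+1}<-c\}\cap\{\sup_{[0,(\ell+1)\delta]}\Vert\mathbf{X}_t\Vert\le R\}.
\end{equation*}
The Gaussian event has probability $\Phi(-c)^{\ell+1}$. It therefore remains to subtract the probability of leaving the ball while the $Z$-constraints hold. I would decompose this according to the first interval $I_k$ on which the path exits the ball. If the exit occurs in $I_k$, then $Z_1,\ldots,Z_{k-1}$ satisfy their constraints and the exit event is $\mathcal{F}_{k\delta}$-measurable. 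Conditioning on $\mathcal{F}_{k\delta}$, the remaining constraints on $Z_{k+1},\ldots,Z_{\ell+1}$ are independent of that $\sigma$-algebra and contribute a factor $\Phi(-c)^{\ell+1-k}$.

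So each term is bounded by $\Phi(-c)^{j}P(\sup_{I_{\ell-j}}\Vert\mathbf{X}_t\Vert>R)$ after reindexing $j=\ell+1-k$. Here I drop the constraints on the $Z$'s before the exit interval, and use that the constraint on the exit interval's own $Z$ is not exploited. The case $k=1$ uses $I_1=[0,\delta]$ so that $\mathbf{X}_0$ is included. Matching the index range $j=1,\ldots,\ell$ with the intervals $I_{\ell-j}$ exactly as stated requires care in bookkeeping. I expect the stated indexing to come out of a reverse counting, possibly with the last exit interval absorbed differently. If necessary I would bound the $k=\ell+1$ term by the $j$-range by merging intervals, since $P(\sup_{I_a\cup I_b}>R)\le$ the sum of the two.

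The mirror sign pattern gives the same bound, producing the factor $2$ on both terms. Analogous reasoning with $\ell$ constraints gives \eqref{eqLemma1stResult02}.

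\textbf{Main obstacle.} The main obstacle is this conditioning step. It requires that exit events before time $k\delta$ and the increments $Z_{k+1},\ldots$ be independent, which holds because $\mathbf{X}_0$ is independent of $\mathbf{W}$ and $\mathbf{X}$ is adapted. It also requires matching the resulting sum to the precise indices $I_{\ell-j}$ claimed in the statement.
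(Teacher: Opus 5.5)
Your argument is correct, and it organizes the estimate differently from the paper. The paper peels off one increment at a time from the end via the tower property. It evaluates each one-step conditional probability with the Markov property and a freezing lemma, obtaining a value between $\Phi(-c)-P(\sup_{t\in I_k}\Vert\mathbf{X}_t\Vert>R\mid\mathcal{F}_{(k-1)\delta})$ and $\Phi(c)$, with your $c=\tfrac{\sqrt{\delta}}{A}\sup_{\Vert\mathbf{x}\Vert\le R}\vert F(\mathbf{x})\vert$, and then iterates.

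You instead use the pathwise sandwich $A\sqrt{\delta}(Z_j-c)\le\Delta X_j\le A\sqrt{\delta}(Z_j+c)$ on the ball event. The upper bound then needs no conditioning at all: it is the probability of a product event of i.i.d.\ normals. The lower bound needs only one conditioning per first-exit interval, using that $(Z_{k+1},\dots,Z_{\ell+1})$ is independent of $\sigma(\mathbf{X}_0,\mathbf{W}_s: s\le k\delta)$; $\mathbf{X}$ is adapted to this $\sigma$-algebra by pathwise uniqueness of the random ODE with locally Lipschitz drift. This avoids the Markov property, makes the origin of every factor visible, and even yields first-exit probabilities in place of $P(\sup_{t\in I_k}\Vert\mathbf{X}_t\Vert>R)$. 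Both routes produce the same weights. The exit term for $I_k$ carries $\Phi(-c)^{\ell+1-k}$, $k=1,\dots,\ell+1$, in \eqref{eqLemma1stResult01}, and $\Phi(-c)^{\ell-k}$, $k=1,\dots,\ell$, in \eqref{eqLemma1stResult02}.

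The index mismatch you flagged is not a bookkeeping problem on your side: your indexing is right and the printed one is not. With $j=\ell+1-k$ your terms read $\Phi(-c)^jP(\sup_{t\in I_{\ell+1-j}}\Vert\mathbf{X}_t\Vert>R)$, $j=0,\dots,\ell$, not $I_{\ell-j}$. In the paper's own first peeling step, $P(\sup_{t\in I_{\ell+1}}\Vert\mathbf{X}_t\Vert>R)$ is subtracted with coefficient $1$, and iterating gives the same sum as yours. The printed sum in \eqref{eqLemma1stResult01} involves the undefined $I_0$ and has no terms for $I_\ell$ and $I_{\ell+1}$; the one in \eqref{eqLemma1stResult02} has no term for $I_\ell$.

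As printed, the lemma is false. Take $d=m=1$, $F\equiv 0$, $A=1$, $\mathbf{X}_0=0$, so $c=0$ and $X=W$.
\begin{itemize}
\item For $\ell=1$ the sum in \eqref{eqLemma1stResult02} is empty, and the claim is $1=2\Phi(0)\le P(\sup_{t\in[0,\delta]}\vert W_t\vert\le R)$, which fails for every $R>0$.
\item Reading $I_0$ as empty, \eqref{eqLemma1stResult01} claims $\tfrac12\le P(T_{1,\delta}(X)=1,\ \sup_{t\in[0,2\delta]}\vert W_t\vert\le R)$. The right-hand side is strictly below $P(T_{1,\delta}(X)=1)=\tfrac12$.
\end{itemize}
So your fallback of merging intervals cannot succeed. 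Exits during $I_\ell$ and $I_{\ell+1}$ are not controlled by exits during earlier intervals, and a union bound never removes terms.

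Instead, state and prove the lemma with your sums. That is, subtract $2\sum_{k=1}^{\ell+1}\Phi(-c)^{\ell+1-k}P(\sup_{t\in I_k}\Vert\mathbf{X}_t\Vert>R)$ in \eqref{eqLemma1stResult01} and $2\sum_{k=1}^{\ell}\Phi(-c)^{\ell-k}P(\sup_{t\in I_k}\Vert\mathbf{X}_t\Vert>R)$ in \eqref{eqLemma1stResult02}. This is all Theorem~\ref{Thm1stResult} uses. Once $(\ell+1)\delta\le 1$, each of the finitely many subtracted terms is at most $P(\sup_{t\in[0,1]}\Vert\mathbf{X}_t\Vert>\delta^{-\alpha})$, which tends to $0$.

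One point you share with the paper: your upper bound splits $\{T_{1,\delta}(X)=\ell\}$ into two strict sign patterns, which tacitly uses $P(\Delta X_j=0)=0$. This holds, e.g.\ via a Girsanov change of measure localized at the exit time from a ball, under which $X-X_0$ is $A$ times a Brownian motion up to that time. It deserves a line.
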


\begin{proof}
Let $\ell \in \N$, $R > 0$ and $\delta > 0$ be fixed.\\
We begin by proving \eqref{eqLemma1stResult01}. First note that
\begin{align*}
&\Bigl\lbrace T_{1}(X) = \ell, \sup_{t \in [0, (\ell+1) \delta]} \Vert \mathbf{X}_t \Vert \leq R \Bigr\rbrace\\
&\quad = \Bigl\lbrace \Delta X_{1} \leq 0, \Delta X_{2} \leq 0, \ldots, \Delta X_\ell \leq 0, \Delta X_{\ell+1} > 0, \sup_{t \in [0, (\ell+1) \delta]} \Vert \mathbf{X}_t \Vert \leq R \Bigr\rbrace \\
&\quad \quad \; \cup \Bigl\lbrace \Delta X_{1} \geq 0, \Delta X_{2} \geq 0, \ldots, \Delta X_\ell \geq 0, \Delta X_{\ell+1} < 0, \sup_{t \in [0, (\ell+1) \delta]} \Vert \mathbf{X}_t \Vert \leq R \Bigr\rbrace.
\end{align*}
Define $\mathcal{F}_t \coloneq \sigma((\mathbf{X}_s)_{s\leq t})$, $t \geq 0$. Then
\begin{align*}
&P\Bigl( \Delta X_{1} \leq 0, \Delta X_{2} \leq 0, \ldots, \Delta X_\ell \leq 0, \Delta X_{\ell+1} > 0, \sup_{t \in [0, (\ell+1) \delta]} \Vert \mathbf{X}_t \Vert \leq R \Bigr) \\
&= \text{E} \Bigl[ \mathbbm{1}_{\lbrace \Delta X_{1} \leq 0, \sup_{t \in I_1} \Vert \mathbf{X}_t \Vert \leq R \rbrace} \mathbbm{1}_{\lbrace \Delta X_{2} \leq 0, \sup_{t \in I_2} \Vert \mathbf{X}_t \Vert \leq R}\rbrace \ldots \\
&\hspace{2.5em}\mathbbm{1}_{\lbrace \Delta X_\ell \leq 0, \sup_{t \in I_\ell} \Vert \mathbf{X}_t \Vert \leq R \rbrace} \mathbbm{1}_{\lbrace \Delta X_{\ell+1} > 0, \sup_{t \in I_{\ell+1}} \Vert \mathbf{X}_t \Vert \leq R \rbrace} \Bigr] \\
&= \text{E} \Bigl[ \text{E} \bigl[ \ldots \text{E} \bigl[ \mathbbm{1}_{\lbrace \Delta X_{1} \leq 0, \sup_{t \in I_1} \Vert \mathbf{X}_t \Vert \leq R \rbrace} \mathbbm{1}_{\lbrace \Delta X_{2} \leq 0, \sup_{t \in I_2} \Vert \mathbf{X}_t \Vert \leq R \rbrace} \ldots\\
&\hspace{6em} \mathbbm{1}_{\lbrace \Delta X_\ell \leq 0, \sup_{t \in I_\ell} \Vert \mathbf{X}_t \Vert \leq R \rbrace} \mathbbm{1}_{\lbrace \Delta X_{\ell+1} > 0, \sup_{t \in I_{\ell+1}} \Vert \mathbf{X}_t \Vert \leq R \rbrace} \, \vert \, \mathcal{F}_{\ell \delta} \bigr] \ldots \, \vert \, \mathcal{F}_{\delta} \bigr] \Bigr]
\end{align*}
by the tower property of conditional expectations. By definition of $(\mathcal{F}_t)_{t \geq 0}$, the indicator functions $\mathbbm{1}_{\lbrace \Delta X_i \leq 0, \sup_{t \in I_i} \Vert \mathbf{X}_t \Vert \leq R \rbrace}$ (or $\mathbbm{1}_{\lbrace \Delta X_i > 0, \sup_{t \in I_i} \Vert \mathbf{X}_t \Vert \leq R \rbrace}$ respectively), $1 \leq i \leq j \leq \ell+1$, are $\mathcal{F}_{j \delta}$-measurable, so by the properties of conditional expectations,
\begin{align*}
&P\Bigl( \Delta X_{1} \leq 0, \Delta X_{2} \leq 0, \ldots, \Delta X_\ell \leq 0, \Delta X_{\ell+1} > 0, \sup_{t \in [0, (\ell+1) \delta]} \Vert \mathbf{X}_t \Vert \leq R \Bigr) \\
&= \text{E} \Bigl[\mathbbm{1}_{\lbrace \Delta X_{1} \leq 0, \sup_{t \in I_1} \Vert \mathbf{X}_t \Vert \leq R \rbrace} \text{E} \bigl[ \mathbbm{1}_{\lbrace \Delta X_{2} \leq 0, \sup_{t \in I_2} \Vert \mathbf{X}_t \Vert \leq R \rbrace} \ldots\\
&\hspace{2.5em}\text{E} \bigl[ \mathbbm{1}_{\lbrace \Delta X_\ell \leq 0, \sup_{t \in I_\ell} \Vert \mathbf{X}_t \Vert \leq R \rbrace} \text{E} \bigl[ \mathbbm{1}_{\lbrace \Delta X_{\ell+1} > 0, \sup_{t \in I_{\ell+1}} \Vert \mathbf{X}_t \Vert \leq R \rbrace} \, \vert \, \mathcal{F}_{\ell \delta} \bigr] \, \vert \, \mathcal{F}_{(\ell-1) \delta} \bigr] \ldots \, \vert \, \mathcal{F}_{\delta} \bigr] \Bigr].
\end{align*}
The uniqueness formulated for $\mathbf{X}$ in Assumption \ref{Assumptions} implies that $\mathbf{X}_s = \mathbf{X}_s^{\mathbf{X}_{\ell\delta}, \ell \delta}$ a.s.\ for all $s \geq \ell \delta$, so for the conditional expectation at the very inside we get
\begin{align*}
&\text{E} \bigl[ \mathbbm{1}_{\lbrace \Delta X_{\ell+1} > 0, \sup_{t \in I_{\ell+1}} \Vert \mathbf{X}_t \Vert \leq R \rbrace} \, \vert \, \mathcal{F}_{\ell \delta} \bigr]\\
&\quad = \,\text{E} \biggl[ \mathbbm{1}_{\lbrace X_{(\ell+1)\delta}^{\mathbf{X}_{\ell\delta}, \ell \delta}-X_{\ell \delta}^{\mathbf{X}_{\ell\delta}, \ell \delta} > 0, \sup_{t \in I_{\ell+1}} \Vert \mathbf{X}^{\mathbf{X}_{\ell\delta}, \ell \delta}_t \Vert \leq R \rbrace} \, \vert \, \mathcal{F}_{\ell \delta} \biggr]\\
&\quad = \,\text{E} \biggl[ \mathbbm{1}_{\lbrace X_{(\ell+1)\delta}^{\mathbf{X}_{\ell\delta}, \ell \delta}-X_{\ell \delta}^{\mathbf{X}_{\ell\delta}, \ell \delta} > 0, \sup_{t \in I_{\ell+1}} \Vert \mathbf{X}^{\mathbf{X}_{\ell\delta}, \ell \delta}_t \Vert \leq R \rbrace} \, \vert \, \mathbf{X}_{\ell \delta} \biggr],
\end{align*}
where the last line results from the Markov property since $\mathbf{X}$ is a time-homogeneous Itô diffusion. Using, e.g., \cite[Lemma 2.3.4]{Shreve2004} it follows that
\begin{align*}
&\text{E} \biggl[ \mathbbm{1}_{\lbrace X_{(\ell+1)\delta}^{\mathbf{X}_{\ell\delta}, \ell \delta}-X_{\ell \delta}^{\mathbf{X}_{\ell\delta}, \ell \delta} > 0, \sup_{t \in I_{\ell+1}} \Vert \mathbf{X}^{\mathbf{X}_{\ell\delta}, \ell \delta}_t \Vert \leq R \rbrace} \, \vert \, \mathbf{X}_{\ell \delta} \biggr]\\
&\quad = \, \text{E} \biggl[ \mathbbm{1}_{\lbrace X_{(\ell+1)\delta}^{\mathbf{x}, \ell \delta}-X_{\ell \delta}^{\mathbf{x}, \ell \delta} > 0, \sup_{t \in I_{\ell+1}} \Vert \mathbf{X}^{\mathbf{x}, \ell \delta}_t \Vert \leq R \rbrace} \biggr]\biggl\vert_{\mathbf{x} = \mathbf{X}_{\ell \delta}}\\
&\quad =\, P \Bigl( X_{(\ell+1)\delta}^{\mathbf{x}, \ell \delta}-X_{\ell \delta}^{\mathbf{x}, \ell \delta} > 0, \sup_{t \in I_{\ell+1}} \Vert \mathbf{X}^{\mathbf{x}, \ell \delta}_t \Vert \leq R  \Bigr)\biggl\vert_{\mathbf{x} = \mathbf{X}_{\ell \delta}}.
\end{align*}
Let $\Delta W_{\ell+1} \coloneq W_{(\ell+1)\delta} - W_{\ell\delta}$. Then, by definition of $X$,
\begin{align*}
&P \Bigl( X_{(\ell+1)\delta}^{\mathbf{x}, \ell \delta}-X_{\ell \delta}^{\mathbf{x}, \ell \delta} > 0, \sup_{t \in I_{\ell+1}} \Vert \mathbf{X}^{\mathbf{x}, \ell \delta}_t \Vert \leq R  \Bigr)\biggl\vert_{\mathbf{x} = \mathbf{X}_{\ell \delta}}\\
&\quad = \, P\Bigl( \int_{\ell \delta}^{(\ell+1)\delta} F(\mathbf{X}_s^{\mathbf{x}, \ell\delta}) \, \text{d}s + A \Delta W_{\ell+1} > 0, \, \sup_{t \in I_{\ell+1}} \Vert \mathbf{X}_t^{\mathbf{x}, \ell\delta} \Vert \leq R \Bigr)\biggl\vert_{\mathbf{x} = \mathbf{X}_{\ell \delta}}\\
&\quad= \,P\Bigl( A \tfrac{\Delta W_{\ell+1}}{\sqrt{\delta}} > -\tfrac{\int_{\ell\delta}^{(\ell+1)\delta} F(\mathbf{X}_s^{\mathbf{x},\ell\delta}) \, \text{d}s}{\sqrt{\delta}}, \, \sup_{t \in I_{\ell+1}} \Vert \mathbf{X}_t^{\mathbf{x}, \ell\delta} \Vert \leq R \Bigr) \biggl\vert_{\mathbf{x} = \mathbf{X}_{\ell \delta}}.
\end{align*}
Furthermore,
\begin{align*}
&P\Bigl( A \tfrac{\Delta W_{\ell+1}}{\sqrt{\delta}} > -\tfrac{\int_{\ell\delta}^{(\ell+1)\delta} F(\mathbf{X}_s^{\mathbf{x},\ell\delta}) \, \text{d}s}{\sqrt{\delta}}, \, \sup_{t \in I_{\ell+1}} \Vert \mathbf{X}_t^{\mathbf{x}, \ell\delta} \Vert \leq R \Bigr) \biggl\vert_{\mathbf{x} = \mathbf{X}_{\ell \delta}} \\
&\quad \geq \, P\Bigl( A \tfrac{\Delta W_{\ell+1}}{\sqrt{\delta}} >\sqrt{\delta} \sup_{\Vert \mathbf{x} \Vert \leq R} \vert F(\mathbf{x}) \vert , \, \sup_{t \in I_{\ell+1}} \Vert \mathbf{X}_t^{\mathbf{x}, \ell\delta} \Vert \leq R \Bigr)\biggl\vert_{\mathbf{x} = \mathbf{X}_{\ell\delta}}\\
&\quad \geq \, P\Bigl( A \tfrac{\Delta W_{\ell+1}}{\sqrt{\delta}} >\sqrt{\delta} \sup_{\Vert \mathbf{x} \Vert \leq R} \vert F(\mathbf{x}) \vert \Bigr) - P\Bigl(\sup_{t \in I_{\ell+1}} \Vert \mathbf{X}_t^{\mathbf{x}, \ell\delta} \Vert > R \Bigr)\biggl\vert_{\mathbf{x} = \mathbf{X}_{\ell \delta}} \\
&\quad = \, \Phi \biggl(-\tfrac{\sqrt{\delta}}{A} \sup_{\Vert \mathbf{x} \Vert \leq R} \vert F(\mathbf{x}) \vert \biggr) - \text{E}\Bigl[\mathbbm{1}_{ \lbrace \sup_{t \in I_{\ell+1}} \Vert \mathbf{X}_t \Vert > R \rbrace} \, \vert \, \mathcal{F}_{\ell \delta} \Bigr]
\end{align*}
and
\begin{align*}
&P\Bigl( A \tfrac{\Delta W_{\ell+1}}{\sqrt{\delta}} > -\tfrac{\int_{\ell\delta}^{(\ell+1)\delta} F(\mathbf{X}_s^{\mathbf{x},\ell\delta}) \, \text{d}s}{\sqrt{\delta}}, \sup_{t \in I_{\ell+1}} \Vert \mathbf{X}_t^{\mathbf{x}, \ell\delta} \Vert \leq R \Bigr) \biggl\vert_{\mathbf{x} = \mathbf{X}_{\ell \delta}} \\
&\quad \leq \, P\Bigl( A \tfrac{\Delta W_{\ell+1}}{\sqrt{\delta}} > -\sqrt{\delta}\sup_{\Vert \mathbf{x} \Vert \leq R} \vert F(\mathbf{x}) \vert \Bigr)\\ 
&\quad = \, \Phi \biggl(\tfrac{\sqrt{\delta}}{A} \sup_{\Vert \mathbf{x} \Vert \leq R} \vert F(\mathbf{x}) \vert \biggr),
\end{align*}
where we have used that $\delta^{-1/2}\Delta W_{\ell+1}$ is standard normally distributed and that the corresponding cumulative distribution function $\Phi$ has the property of $\Phi(-x) = 1-\Phi(x)$ for $x \in \R$. Therefore, a lower bound for the full iterated conditional expectations is
\begin{align*}
& \Phi\biggl(-\tfrac{\sqrt{\delta}}{A} \sup_{\Vert \mathbf{x} \Vert \leq R} \vert F(\mathbf{x}) \vert \biggr) \text{E} \Bigl[\mathbbm{1}_{\lbrace \Delta X_{1} \leq 0, \sup_{t \in I_1} \Vert \mathbf{X}_t \Vert \leq R \rbrace} \text{E} \bigl[ \mathbbm{1}_{\lbrace \Delta X_{2} \leq 0, \sup_{t \in I_2} \Vert \mathbf{X}_t \Vert \leq R \rbrace}\\
&\hspace{12em} \ldots \text{E} \bigl[ \mathbbm{1}_{\lbrace \Delta X_\ell \leq 0, \sup_{t \in I_\ell} \Vert \mathbf{X}_t \Vert \leq R \rbrace} \, \vert \, \mathcal{F}_{(\ell-1) \delta} \bigr] \ldots \, \vert \, \mathcal{F}_{\delta} \bigr] \Bigr] \\
&-\text{E} \Bigl[\mathbbm{1}_{\lbrace \Delta X_{1} \leq 0, \sup_{t \in I_1} \Vert \mathbf{X}_t \Vert \leq R \rbrace} \text{E} \bigl[ \mathbbm{1}_{\lbrace \Delta X_{2} \leq 0, \sup_{t \in I_2} \Vert \mathbf{X}_t \Vert \leq R \rbrace}\\
&\hspace{1.5em} \ldots \text{E} \bigl[ \mathbbm{1}_{\lbrace \Delta X_\ell \leq 0, \sup_{t \in I_\ell} \Vert \mathbf{X}_t \Vert \leq R \rbrace} \text{E}\Bigl[\mathbbm{1}_{ \lbrace \sup_{t \in I_{\ell+1}} \Vert \mathbf{X}_t \Vert > R \rbrace} \, \vert \, \mathcal{F}_{\ell \delta} \Bigr] \, \vert \, \mathcal{F}_{(\ell-1) \delta} \bigr] \ldots \, \vert \, \mathcal{F}_{\delta} \bigr] \Bigr].
\end{align*}
Since, due to the monotonicity and the tower property of conditional expectations,
\begin{align*}
&\text{E} \biggl[ \mathbbm{1}_{\lbrace \Delta X_{j} \leq 0, \sup_{t \in I_j} \Vert \mathbf{X}_t \Vert \leq R \rbrace} \text{E}\Bigl[\mathbbm{1}_{ \lbrace \sup_{t \in I_{\ell+1}} \Vert \mathbf{X}_t \Vert > R \rbrace} \, \vert \, \mathcal{F}_{j \delta} \Bigr] \, \vert \, \mathcal{F}_{(j-1) \delta} \biggr] \\ &\quad \leq \, \text{E} \biggl[ \text{E}\Bigl[\mathbbm{1}_{ \lbrace \sup_{t \in I_{\ell+1}} \Vert \mathbf{X}_t \Vert > R \rbrace} \, \vert \, \mathcal{F}_{j \delta} \Bigr] \, \vert \, \mathcal{F}_{(j-1) \delta} \biggr] \\
&\quad = \, \text{E} \Bigl[ \mathbbm{1}_{ \lbrace \sup_{t \in I_{\ell+1}} \Vert \mathbf{X}_t \Vert > R \rbrace}  \, \vert \, \mathcal{F}_{(j-1) \delta} \Bigr] 
\end{align*}
for $j = 2, \ldots, \ell$, in particular
\begin{align*}
&\Phi\biggl(-\tfrac{\sqrt{\delta}}{A} \sup_{\Vert \mathbf{x} \Vert \leq R} \vert F(\mathbf{x}) \vert \biggr) \text{E} \Bigl[\mathbbm{1}_{\lbrace \Delta X_{1} \leq 0, \sup_{t \in I_1} \Vert \mathbf{X}_t \Vert \leq R \rbrace} \text{E} \bigl[ \mathbbm{1}_{\lbrace \Delta X_{2} \leq 0, \sup_{t \in I_2} \Vert \mathbf{X}_t \Vert \leq R \rbrace}\\
&\hspace{12em} \ldots \text{E} \bigl[ \mathbbm{1}_{\lbrace \Delta X_\ell \leq 0, \sup_{t \in I_\ell} \Vert \mathbf{X}_t \Vert \leq R \rbrace} \, \vert \, \mathcal{F}_{(\ell-1) \delta} \bigr] \ldots \, \vert \, \mathcal{F}_{\delta} \bigr] \Bigr] \\
& -\text{E} \Bigl[ \text{E}\Bigl[\mathbbm{1}_{ \lbrace \sup_{t \in I_{\ell+1}} \Vert \mathbf{X}_t \Vert > R \rbrace} \, \vert \, \mathcal{F}_{\delta} \Bigr] \Bigr] \\
= \, &\Phi\biggl(-\tfrac{\sqrt{\delta}}{A} \sup_{\Vert \mathbf{x} \Vert \leq R} \vert F(\mathbf{x}) \vert \biggr) \text{E} \Bigl[\mathbbm{1}_{\lbrace \Delta X_{1} \leq 0, \sup_{t \in I_1} \Vert \mathbf{X}_t \Vert \leq R \rbrace} \text{E} \bigl[ \mathbbm{1}_{\lbrace \Delta X_{2} \leq 0, \sup_{t \in I_2} \Vert \mathbf{X}_t \Vert \leq R \rbrace}\\
&\hspace{12em} \ldots \text{E} \bigl[ \mathbbm{1}_{\lbrace \Delta X_\ell \leq 0, \sup_{t \in I_\ell} \Vert \mathbf{X}_t \Vert \leq R \rbrace} \, \vert \, \mathcal{F}_{(\ell-1) \delta} \bigr] \ldots \, \vert \, \mathcal{F}_{\delta} \bigr] \Bigr] \\
&- P \Bigl(\sup_{t \in I_{\ell+1}} \Vert \mathbf{X}_t \Vert > R \Bigr)
\end{align*}
is a lower bound for the full iterated conditional expectations.
Altogether we thus have
\begin{align*}
& \Phi\biggl(-\tfrac{\sqrt{\delta}}{A} \sup_{\Vert \mathbf{x} \Vert \leq R} \vert F(\mathbf{x}) \vert \biggr) \text{E} \Bigl[\mathbbm{1}_{\lbrace \Delta X_{1} \leq 0, \sup_{t \in I_1} \Vert \mathbf{X}_t \Vert \leq R \rbrace} \text{E} \bigl[ \mathbbm{1}_{\lbrace \Delta X_{2} \leq 0, \sup_{t \in I_2} \Vert \mathbf{X}_t \Vert \leq R \rbrace}\\
&\hspace{12em} \ldots \text{E} \bigl[ \mathbbm{1}_{\lbrace \Delta X_\ell \leq 0, \sup_{t \in I_\ell} \Vert \mathbf{X}_t \Vert \leq R \rbrace} \, \vert \, \mathcal{F}_{(\ell-1) \delta} \bigr] \ldots \, \vert \, \mathcal{F}_{\delta} \bigr] \Bigr] \\
&- P \Bigl(\sup_{t \in I_{\ell+1}} \Vert \mathbf{X}_t \Vert > R \Bigr) \\
\leq \, &\text{E} \Bigl[\mathbbm{1}_{\lbrace \Delta X_{1} \leq 0, \sup_{t \in I_1} \Vert \mathbf{X}_t \Vert \leq R \rbrace} \text{E} \bigl[ \mathbbm{1}_{\lbrace \Delta X_{2} \leq 0, \sup_{t \in I_2} \Vert \mathbf{X}_t \Vert \leq R \rbrace}\\
&\hspace{1em} \ldots \text{E} \bigl[ \mathbbm{1}_{\lbrace \Delta X_\ell \leq 0, \sup_{t \in I_\ell} \Vert \mathbf{X}_t \Vert \leq R \rbrace} \text{E} \bigl[ \mathbbm{1}_{\lbrace \Delta X_{\ell+1} > 0, \sup_{t \in I_{\ell+1}} \Vert \mathbf{X}_t \Vert \leq R \rbrace} \, \vert \, \mathcal{F}_{\ell \delta} \bigr] \, \vert \, \mathcal{F}_{\ell-1 \delta} \bigr] \ldots \, \vert \, \mathcal{F}_{\delta} \bigr] \Bigr] \\
\leq \, & \Phi \biggl(\tfrac{\sqrt{\delta}}{A} \sup_{\Vert \mathbf{x} \Vert \leq R} \vert F(\mathbf{x}) \vert \biggr) \text{E} \Bigl[\mathbbm{1}_{\lbrace \Delta X_{1} \leq 0, \sup_{t \in I_1} \Vert \mathbf{X}_t \Vert \leq R \rbrace} \text{E} \bigl[ \mathbbm{1}_{\lbrace \Delta X_{2} \leq 0, \sup_{t \in I_2} \Vert \mathbf{X}_t \Vert \leq R \rbrace}\\
&\hspace{11em} \ldots \text{E} \bigl[ \mathbbm{1}_{\lbrace \Delta X_\ell \leq 0, \sup_{t \in I_\ell} \Vert \mathbf{X}_t \Vert \leq R \rbrace} \, \vert \, \mathcal{F}_{(\ell-1) \delta} \bigr] \ldots \, \vert \, \mathcal{F}_{\delta} \bigr] \Bigr].
\end{align*}
We can proceed analogously for the next conditional expectation in the very inside of the lower and upper bounds (note that replacing $>$ by $\leq$ in the indicator function leads to the same factors $\Phi(\pm \sqrt{\delta} A^{-1} \sup_{\Vert \mathbf{x} \Vert \leq R} \vert F(\mathbf{x}) \vert)$). Finally, we have
\begin{align*}
&\Phi\biggl(-\tfrac{\sqrt{\delta}}{A} \sup_{\Vert \mathbf{x} \Vert \leq R} \vert F(\mathbf{x}) \vert\biggr)^{\ell+1} - \sum_{j = 1}^\ell \Phi\biggl(-\tfrac{\sqrt{\delta}}{A} \sup_{\Vert \mathbf{x} \Vert \leq R} \vert F(\mathbf{x}) \vert\biggr)^j P \Bigl(\sup_{t \in I_{\ell-j}} \Vert \mathbf{X}_t \Vert > R \Bigr) \\
&\quad \leq \, P \Bigl( \Delta X_{1} \leq 0, \Delta X_{2} \leq 0, \ldots, \Delta X_\ell \leq 0, \Delta X_{\ell+1} > 0, \sup_{t \in [0, (\ell+1) \delta]} \Vert \mathbf{X}_t \Vert \leq R \Bigr)\\
&\quad \leq \, \Phi\biggl(\tfrac{\sqrt{\delta}}{A} \sup_{\Vert \mathbf{x} \Vert \leq R} \vert F(\mathbf{x}) \vert\biggr)^{\ell+1}.
\end{align*}
By similar arguments,
\begin{align*}
&\Phi\biggl(-\tfrac{\sqrt{\delta}}{A} \sup_{\Vert \mathbf{x} \Vert \leq R} \vert F(\mathbf{x}) \vert\biggr)^{\ell+1} - \sum_{j = 1}^\ell \Phi\biggl(-\tfrac{\sqrt{\delta}}{A} \sup_{\Vert \mathbf{x} \Vert \leq R} \vert F(\mathbf{x}) \vert\biggr)^j P \Bigl(\sup_{t \in I_{\ell-j}} \Vert \mathbf{X}_t \Vert > R \Bigr) \\
&\quad \leq \, P \Bigl( \Delta X_{1} \geq 0, \Delta X_{2} \geq 0, \ldots, \Delta X_\ell \geq 0, \Delta X_{\ell+1} < 0, \sup_{t \in [0, (\ell+1) \delta]} \Vert \mathbf{X}_t \Vert \leq R \Bigr)\\
&\quad \leq \, \Phi\biggl(\tfrac{\sqrt{\delta}}{A} \sup_{\Vert \mathbf{x} \Vert \leq R} \vert F(\mathbf{x}) \vert\biggr)^{\ell+1}.
\end{align*}
Altogether,
\begin{align*}
&2 \Phi\biggl(-\tfrac{\sqrt{\delta}}{A} \sup_{\Vert \mathbf{x} \Vert \leq R} \vert F(\mathbf{x}) \vert\biggr)^{\ell+1} - 2 \sum_{j = 1}^\ell \Phi\biggl(-\tfrac{\sqrt{\delta}}{A} \sup_{\Vert \mathbf{x} \Vert \leq R} \vert F(\mathbf{x}) \vert\biggr)^j P \Bigl(\sup_{t \in I_{\ell-j}} \Vert \mathbf{X}_t \Vert > R \Bigr) \\
&\quad \leq \, P \Bigl(T_1 = \ell, \sup_{t \in [0, (\ell+1) \delta]} \Vert \mathbf{X}_t \Vert \leq R \Bigr) \\
&\quad \leq \, 2 \Phi\biggl(\tfrac{\sqrt{\delta}}{A} \sup_{\Vert \mathbf{x} \Vert \leq R} \vert F(\mathbf{x}) \vert\biggr)^{\ell+1}.
\end{align*}
We now turn to the proof of \eqref{eqLemma1stResult02}. Since
\begin{align*}
&P \Bigl(T_{1} \geq \ell, \sup_{t \in [0, \ell \delta]} \Vert \mathbf{X}_t \Vert \leq R \Bigr) \\
&\quad = \, P \Bigl( \Delta X_{1} \leq 0, \Delta X_{2} \leq 0, \ldots, \Delta X_\ell \leq 0, \sup_{t \in [0, \ell \delta]} \Vert \mathbf{X}_t \Vert \leq R \Bigr) \\
&\quad \quad \; + P \Bigl( \Delta X_{1} \geq 0, \Delta X_{2} \geq 0, \ldots, \Delta X_\ell > 0, \sup_{t \in [0, \ell \delta]} \Vert \mathbf{X}_t \Vert \leq R \Bigr),
\end{align*}
similar arguments as used for the previous claim show directly that
\begin{align*}
&\Phi\Bigl(-\sqrt{\delta}\tfrac{\sup_{\Vert \mathbf{x} \Vert \leq R} \vert F(\mathbf{x}) \vert}{A}\Bigr)^\ell - 2 \sum_{j = 1}^{\ell-1} \Phi\Bigl(-\sqrt{\delta}\tfrac{\sup_{\Vert \mathbf{x} \Vert \leq R} \vert F(\mathbf{x}) \vert}{A}\Bigr)^j P \Bigl(\sup_{t \in I_{\ell-j}} \Vert \mathbf{X}_t \Vert > R \Bigr)\\
&\quad \leq \, P \Bigl(T_1 \geq \ell, \sup_{t \in [0, \ell \delta]} \Vert \mathbf{X}_t \Vert \leq R \Bigr) \\
&\quad \leq \, 2 \Phi\Bigl(\sqrt{\delta}\tfrac{\sup_{\Vert \mathbf{x} \Vert \leq R} \vert F(\mathbf{x}) \vert}{A}\Bigr)^\ell.
\end{align*}
\end{proof}

Using the polynomial growth condition for $F$, we can now prove Theorem \ref{Thm1stResult}.

\begin{proof}[Proof of Theorem \ref{Thm1stResult}]
We start the proof by showing the claim for finite lengths of monotone runs, i.e., $\ell \in \N$. For fixed $\delta >0$, $\ell \in \N$, it holds that
\begin{align*}
P(T_{1, \delta}(X) = \ell) = \, &P \Bigl(T_{1, \delta}(X) = \ell, \sup_{t \in [0, (\ell+1) \delta]} \Vert \mathbf{X}_t \Vert > R \Bigr) \\
&+ P \Bigl(T_{1, \delta}(X) = \ell, \sup_{t \in [0, (\ell+1) \delta]} \Vert \mathbf{X}_t \Vert \leq R \Bigr).
\end{align*}
For the limit of the first summand note that
\begin{align*}
0 \leq \lim_{\delta \rightarrow 0} P \Bigl(T_{1, \delta}(X) = \ell, \sup_{t \in [0, (\ell+1) \delta]} \Vert \mathbf{X}_t \Vert > R \Bigr) &\leq \lim_{\delta \rightarrow 0} P \Bigl(\sup_{t \in [0, (\ell+1) \delta]} \Vert \mathbf{X}_t \Vert > R \Bigr) \\
&\leq \lim_{\delta \rightarrow 0} P \Bigl(\sup_{t \in [0, 1]} \Vert \mathbf{X}_t \Vert > \delta^{-\alpha} \Bigr) = 0,
\end{align*}
due to the continuous paths of $\mathbf{X}$ being bounded on the compact set $[0,1]$.
For the limit of the second summand let $\alpha \in (0, (2p)^{-1})$ and set $R = R_{\delta} = \delta^{-\alpha}$. Then \eqref{eqLemma1stResult01} and \eqref{eqPolynGrowthCond} imply that
\begin{align*}
&\lim_{\delta \rightarrow 0} P \Bigl(T_{1, \delta}(X) = \ell, \sup_{t \in [0, (\ell+1) \delta]} \Vert \mathbf{X}_t \Vert \leq R \Bigr)\\
&\quad \geq \, \lim_{\delta \rightarrow 0} 2 \Phi\Bigl(-\tfrac{\sqrt{\delta}}{A} \sup_{\Vert \mathbf{x} \Vert \leq R} \vert F(\mathbf{x}) \vert\Bigr)^{\ell+1} - 2 \sum_{j = 1}^\ell \Phi\Bigl(-\tfrac{\sqrt{\delta}}{A} \sup_{\Vert \mathbf{x} \Vert \leq R} \vert F(\mathbf{x}) \vert\Bigr)^j P \Bigl(\sup_{t \in I_{\ell-j, \delta}} \Vert \mathbf{X}_t \Vert > R \Bigr)\\
&\quad \geq \, \lim_{\delta \rightarrow 0} 2 \Phi\Bigl(-\tfrac{\sqrt{\delta}}{A} C (1+\delta^{-\alpha p})\Bigr)^{\ell+1} - 2 \sum_{j = 1}^\ell \Phi\Bigl(\tfrac{\sqrt{\delta}}{A}C (1+\delta^{-\alpha p})\Bigr)^j P \Bigl(\sup_{t \in [0,1]} \Vert \mathbf{X}_t \Vert > \delta^{-\alpha} \Bigr) \\
&\quad = \, \frac{1}{2^\ell}
\end{align*}
as well as 
\begin{align*}
&\lim_{\delta \rightarrow 0} P \Bigl(T_{1, \delta}(X) = \ell, \sup_{t \in [0, (\ell+1) \delta]} \Vert \mathbf{X}_t \Vert \leq R \Bigr)\\
&\quad \leq \, \lim_{\delta \rightarrow 0} 2 \Phi\Bigl(\tfrac{\sqrt{\delta}}{A} \sup_{\Vert \mathbf{x} \Vert \leq R} \vert F(\mathbf{x}) \vert\Bigr)^{\ell+1}\\
&\quad \leq \, \lim_{\delta \rightarrow 0} 2 \Phi\Bigl(\tfrac{\sqrt{\delta}}{A} C (1+\delta^{-\alpha p})\Bigr)^{\ell+1} = \frac{1}{2^\ell},
\end{align*}
and thus,
\begin{equation*}
\lim_{\delta \rightarrow 0} P \Bigl(T_{1, \delta}(X) = \ell, \sup_{t \in [0, (\ell+1) \delta]} \Vert \mathbf{X}_t \Vert \leq R \Bigr) = \frac{1}{2^\ell}.
\end{equation*}

For the claim in the case of $\ell = \infty$ let $\alpha \in (0, (2p)^{-1})$ again. For every $\delta > 0$ it holds that
\begin{equation*}
\lim_{\ell \rightarrow \infty} P(T_{1,\delta}(X) \geq \ell) \leq P(T_{1,\delta}(X) \geq \lfloor \delta^{-1} \rfloor)
\end{equation*}
and, by the continuity of $P$ from below,
\begin{equation*}
\lim_{\ell \rightarrow \infty} P(T_{1,\delta}(X) \geq \ell) = P(T_{1,\delta}(X) = \infty).
\end{equation*}
Thus, 
\begin{align*}
\lim_{\delta \rightarrow 0} P(T_{1,\delta}(X) = \infty) &\leq \lim_{\delta \rightarrow 0} P(T_{1,\delta}(X) \geq \lfloor \delta^{-1} \rfloor) \\
&= \lim_{\delta \rightarrow 0} \biggl[ P \Bigl(T_{1,\delta}(X) \geq \lfloor \delta^{-1} \rfloor, \sup_{t \in [0,1]} \Vert \mathbf{X}_t \Vert > \delta^{-\alpha} \Bigr) \\
&\hspace{3.5em} + P \Bigl(T_{1,\delta}(X) \geq \lfloor \delta^{-1} \rfloor, \sup_{t \in [0,1]} \Vert \mathbf{X}_t \Vert \leq \delta^{-\alpha} \Bigr) \biggr].
\end{align*}
Due to the continuous paths of $\mathbf{X}$ being bounded on the compact set $[0,1]$, it holds that
\begin{equation*}
P \Bigl(T_{1,\delta}(X) \geq \lfloor \delta^{-1} \rfloor, \sup_{t \in [0,1]} \Vert \mathbf{X}_t \Vert > \delta^{-\alpha} \Bigr) \leq P \Bigl( \sup_{t \in [0,1]} \Vert \mathbf{X}_t \Vert > \delta^{-\alpha} \Bigr) \xrightarrow{\delta \rightarrow 0} 0.
\end{equation*}
Furthermore, \eqref{eqLemma1stResult02} with $\ell = \lfloor \delta^{-1} \rfloor$ and $R = R_{\delta} = \delta^{-\alpha}$ and \eqref{eqPolynGrowthCond} give
\begin{align*}
P \Bigl(T_{1,\delta}(X) \geq \lfloor \delta^{-1} \rfloor, \sup_{t \in [0,1]} \Vert \mathbf{X}_t \Vert \leq \delta^{-\alpha} \Bigr) &\leq 2 \Phi \Bigl( -\tfrac{\sqrt{\delta}}{A} \sup_{\Vert \mathbf{x} \Vert \leq R} \vert F(\mathbf{x}) \vert \Bigr)^{\lfloor \delta^{-1} \rfloor} \\
&\leq 2 \Phi \Bigl( \sqrt{\delta}{A} C(1+\delta^{-\alpha p}) \Bigr)^{\lfloor \delta^{-1} \rfloor} \xrightarrow{\delta \rightarrow 0} 0.
\end{align*}
Altogether,
\begin{equation*}
0 \leq \lim_{\delta \rightarrow 0} P(T_{1,\delta}(X) = \infty) \leq 0.
\end{equation*}
\end{proof}

With the ergodicity of the given Markov chain $(X_{n \delta})_{n \in \N_0}$, we can prove that all occurrence times $R_{j,\delta}(X)$ are almost surely finite.

\begin{lemma}
\label{LemmaFiniteOccurrenceTimes}
Suppose that Assumption~\ref{Assumptions} holds and that \eqref{eqGenSDEIntro} admits an invariant measure $\mu$. Let $\mathbf{X}_0 \sim \mu$ be independent of $\mathbf{W}$, let $\mathbf{X} = (\mathbf{X}_t)_{t \geq 0}$ be the corresponding solution process to \eqref{eqGenSDEIntro} with $d$-th component $X = (X_t)_{t \geq 0}$, and assume that the ergodicity condition \eqref{eqErgoCond} is fulfilled. Then there exists a $\tilde{\delta} > 0$ such that 
\begin{equation*}
P(R_{j,\delta}(X) < \infty) = 1
\end{equation*}
for all $\delta \in (0, \tilde{\delta}]$ and all $j \in \N$.
\end{lemma}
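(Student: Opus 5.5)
We will show that for small $\delta$ a sign change occurs almost surely after any fixed time. The key tool is stationarity combined with the ergodicity condition \eqref{eqErgoCond}. Since $R_{j} < \infty$ for all $j$ is the same as saying that sign changes occur infinitely often, it suffices to show
\[
P\bigl(\Delta X_{n+1,\delta}\Delta X_{n,\delta}<0 \text{ for infinitely many } n\bigr)=1 .
\]
Then every $R_{j,\delta}(X)$ is finite almost surely; by induction, $R_j<\infty$ forces $R_{j+1}<\infty$.

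\emph{Step 1: choice of $\tilde\delta$.} Apply Theorem~\ref{Thm1stResult} with the stationary initial law $\mathbf{X}_0\sim\mu$, which is allowed since $\mathbf{X}_0$ is independent of $\mathbf{W}$. For $\ell=1$ this gives $\lim_{\delta\to0}P(T_{1,\delta}(X)=1)=1/2$, where $\{T_{1,\delta}=1\}=\{\Delta X_{1}\Delta X_{2}<0\}$. Hence there is $\tilde\delta>0$ with
\[
p_\delta:=P(\Delta X_{1,\delta}\Delta X_{2,\delta}<0)\ge 1/4>0 \quad\text{for all } \delta\in(0,\tilde\delta].
\]

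\emph{Step 2: express $p_\delta$ through the skeleton chain.} Define $g_\delta(\mathbf{x}):=P(\Delta X^{\mathbf{x}}_{1,\delta}\Delta X^{\mathbf{x}}_{2,\delta}<0)$, which is measurable in $\mathbf{x}$. By the Markov property used in the proof of Lemma~\ref{Lemma1stResult}, $p_\delta=\int g_\delta\,\mathrm d\mu>0$. Therefore the set $B:=\{g_\delta>\eta\}$ satisfies $\mu(B)>0$ for some $\eta>0$.

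\emph{Step 3: infinitely many visits and conditional Borel--Cantelli.} By \eqref{eqErgoCond}, $\frac1N\sum_{n<N}\mathbbm 1_B(\mathbf{X}_{n\delta})\to\mu(B)>0$ in probability. Along a subsequence the convergence is almost sure, so $\mathbf{X}_{n\delta}\in B$ for infinitely many $n$ almost surely; since $B$ has positive measure, these visits in fact occur with positive density.

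Restrict to even indices, or to times spaced by $2$, so that the events
\[
E_n=\{\Delta X_{n+1}\Delta X_{n+2}<0\}
\]
are measurable with respect to $\mathcal F_{(n+2)\delta}$. Their conditional probabilities satisfy $P(E_n\mid\mathcal F_{n\delta})=g_\delta(\mathbf{X}_{n\delta})$, again by the Markov property. The conditional (Lévy) Borel--Cantelli lemma then gives
\[
\{E_n \text{ i.o.}\}=\Bigl\{\textstyle\sum_n g_\delta(\mathbf{X}_{2n\delta})=\infty\Bigr\} \quad\text{a.s.}
\]
The right-hand side holds almost surely as soon as $\mathbf{X}_{2n\delta}\in B$ infinitely often. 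For this, apply \eqref{eqErgoCond} with step size $2\delta$, which is permitted since it is assumed for every step size.

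\emph{Main obstacle.} The delicate point is justifying the conditional Borel--Cantelli step cleanly, namely the identity $P(E_n\mid\mathcal F_{n\delta})=g_\delta(\mathbf{X}_{n\delta})$ via the Markov property and uniqueness. Once that is in place, the only other input is turning convergence in probability into infinitely many visits, which the subsequence argument handles. An alternative route would be the ergodic theorem for the stationary ergodic sequence $(\mathbf{X}_{n\delta})$, using Lemma~\ref{LemmaErgodicity}. It gives directly that $\frac1N\sum\mathbbm 1_{E_n}\to p_\delta>0$ almost surely, and I would use that if the appendix provides ergodicity of the shift.
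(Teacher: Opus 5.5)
Your proof is correct, but your main route differs from the paper's. The ``alternative route'' you sketch at the end is exactly what the paper does. It fixes $\delta$ and uses that the stationary skeleton $(\mathbf{X}_{n\delta})_{n}$ is ergodic (Lemma~\ref{LemmaErgodicity} gives the equivalence with \eqref{eqErgoCond}). It then notes, via \cite{Krengel1985}, that the factor process $\bigl(\mathbbm{1}_{(-\infty,0)}(\Delta X_{n+1,\delta}\Delta X_{n,\delta})\bigr)_n$ is again stationary and ergodic. Birkhoff's theorem then gives $\frac1N\sum_{n=1}^N \mathbbm{1}_{(-\infty,0)}(\Delta X_{n+1,\delta}\Delta X_{n,\delta}) \to P(T_{1,\delta}(X)=1)>0$ almost surely, with no conditioning on the past inside the proof.

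Your main route uses \eqref{eqErgoCond} only in its raw in-probability form. You extract infinitely many visits of the $2\delta$-skeleton to $B=\{g_\delta>\eta\}$ from an a.s.\ convergent subsequence. You then turn these visits into infinitely many sign changes using the Markov property and L\'evy's conditional Borel--Cantelli lemma. The steps check out:
\begin{itemize}
\item $E_{2k}$ is $\mathcal{F}_{(2k+2)\delta}$-measurable and $P(E_{2k}\mid\mathcal{F}_{2k\delta})=g_\delta(\mathbf{X}_{2k\delta})$.
\item Applying \eqref{eqErgoCond} at step size $2\delta$ to a set defined through $g_\delta$ is legitimate, since the condition holds for every Borel set and every step size.
\item Infinitely many visits to $B$ force $\sum_k g_\delta(\mathbf{X}_{2k\delta})=\infty$.
\end{itemize}

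What your route buys: it bypasses both the passage from \eqref{eqErgoCond} to shift-ergodicity and the factor-process argument. It only uses that the skeleton returns infinitely often to one set of positive $\mu$-measure, so it would survive under a recurrence hypothesis weaker than ergodicity.

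What it costs: you need measurability of $g_\delta$ and the Markov identity for the conditional probabilities. Both are at the level of rigor the paper already uses in Lemma~\ref{Lemma1stResult}. More importantly, your route yields only ``infinitely often'', not the almost sure limiting frequency $P(\Delta X_{2,\delta}\Delta X_{1,\delta}<0)$ of sign changes. The paper's Birkhoff argument gives that frequency directly, and it is exactly the statement reused in the proof of Theorem~\ref{Thm2ndResult}. So the ergodic-theorem step is needed there regardless, which is why the paper uses it already here.
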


\begin{proof}
Let $\delta >0$ be arbitrary but fixed. By assumption, the discrete-time process $(\mathbf{X}_{n \delta})_{n \in \N}$ is stationary and by \eqref{eqErgoCond} and ergodic. By \cite[Corollary 4.2, Proposition 4.3]{Krengel1985}, the process $(\mathbbm{1}_{(-\infty, 0)}(\Delta X_{n+1, \delta} \Delta X_{n, \delta}))_{n \in \N}$ then also is stationary and ergodic.
Thus, \cite[Theorem~4.4]{Krengel1985} gives
\begin{equation*}
\frac{1}{N} \sum_{n=1}^N \mathbbm{1}_{(-\infty, 0)}(\Delta X_{n+1, \delta} \Delta X_{n, \delta}) \xrightarrow{N \rightarrow \infty} P(\Delta X_{2, \delta} \Delta X_{1, \delta} < 0) \quad P\text{-a.s.}
\end{equation*}
By definition of the first occurrence time, $P(\Delta X_{2, \delta} \Delta X_{1, \delta} < 0) = P(T_{1, \delta}(X) = 1)$, and by Theorem \ref{Thm1stResult} there exits a $\tilde{\delta} > 0$ such that $P(T_{1, \delta}(X) = 1) > 0$ for all $\delta \leq \tilde{\delta}$. Thus, for $\delta \leq \tilde{\delta}$, $\mathbbm{1}_{(-\infty, 0)}(\Delta X_{n+1, \delta} \Delta X_{n, \delta})$ is infinitely often positive a.s., or equivalently, $\Delta X_{n+1, \delta} \Delta X_{n, \delta}$ is infinitely often negative a.s. Hence, for $\delta \leq \tilde{\delta}$, with probability $1$ there is an infinite number of changes in the signs of consecutive increments $\Delta X_{n+1, \delta}, \Delta X_{n, \delta}$, which, by definition, are the occurrence times $R_{j,\delta}(X)$.
\end{proof}

Now we proceed with the idea in \cite[Theorem 1]{Moy1959} to prove our second main result.

\begin{proof}[Proof of Theorem \ref{Thm2ndResult}]
By Lemma \ref{LemmaFiniteOccurrenceTimes} there exists a $\tilde{\delta} > 0$ such that $P(T_{1,\delta}(X) = 1) > 0$ and $P(R_{j,\delta}(X) < \infty) = 1$ for all $\delta \in (0, \tilde{\delta}]$ and all $j \in \N$. Now let $\delta \in (0, \tilde{\delta}]$ be arbitrary but fixed. By the same arguments as in the proof of Lemma \ref{LemmaFiniteOccurrenceTimes}, it holds that
\begin{equation*}
M_N \coloneq \frac{1}{N} \sum_{n=1}^N \mathbbm{1}_{(-\infty, 0)}(\Delta X_{n+1} \Delta X_{n}) \xrightarrow{N \rightarrow \infty} P(\Delta X_{2} \Delta X_{1} < 0) \quad P\text{-a.s.}
\end{equation*}
Since $\delta \leq \tilde{\delta}$, the sequence of occurrence times $(R_j)_{j \in \N} \subseteq \N$ is a strictly monotonically increasing sequence with $R_j \xrightarrow{j \rightarrow \infty} \infty$, and therefore $(M_{R_j})_{j\in \N}$ is a subsequence of $(M_N)_{N \in \N}$. Hence, it holds that
\begin{equation*}
M_{R_j} = \frac{1}{R_j} \sum_{n=1}^{R_j} \mathbbm{1}_{(-\infty, 0)}(\Delta X_{n+1} \Delta X_n) \xrightarrow{j \rightarrow \infty} P(\Delta X_{2} \Delta X_{1} < 0) \quad P\text{-a.s.}
\end{equation*}
The sum $\sum_{n=1}^{R_j} \mathbbm{1}_{(-\infty, 0)}(\Delta X_{n+1} \Delta X_n)$ counts how often the product of consecutive increments up to index $R_j$ is negative. By definition, $R_j$ is also the index for which the product of increments is negative for the $j$-th time. Thus, $\sum_{n=1}^{R_j} \mathbbm{1}_{(-\infty, 0)}(\Delta X_{n+1} \Delta X_n) = j$ and therefore
\begin{equation*}
\frac{j}{R_j}\xrightarrow{j \rightarrow \infty} P(\Delta X_{2} \Delta X_{1} < 0) \quad P\text{-a.s.}
\end{equation*}
Note that $P(\Delta X_2 \Delta X_1 < 0) = P(T_1 = 1) > 0$ since $\delta \leq \tilde{\delta}$, and therefore,
\begin{equation*}
\frac{1}{j} R_j \xrightarrow{j \rightarrow \infty} \frac{1}{P(\Delta X_{2} \Delta X_{1} < 0)} \quad P\text{-a.s.}
\end{equation*}
Because of $T_{1} = R_{1}$ and $T_j = R_j-R_{j-1}$, $j \in \N_{\geq 2}$, it holds that $R_j = \sum_{i=1}^j T_i$, and therefore
\begin{equation}
\label{eqProof2ndMainResult}
\frac{1}{j} \sum_{i=1}^j T_i(X) \xrightarrow{j \rightarrow \infty} \frac{1}{P(\Delta X_{2} \Delta X_{1} < 0)} \quad P\text{-a.s.}
\end{equation}
By Lemma \ref{LemmaAlmostSurelyPathspace} applied to
\begin{equation*}
B \coloneq \biggl\lbrace (\mathbf{x}_n)_{n \in \N_0} \in (\R^d)^{\N_0}: \lim_{j \rightarrow \infty} \frac{1}{j} \sum_{i=1}^j T_i((\mathbf{x}_n)_{n \in \N_0}) = P(\Delta X_{2} \Delta X_{1} < 0)^{-1} \biggr\rbrace,
\end{equation*}
the mean value $j^{-1} \sum_{i=1}^j T_i(X^{\mathbf{x}})$ converges $P$-almost surely to $P(\Delta X_{2} \Delta X_{1} < 0)^{-1}$ also for $\mu$-almost all $\mathbf{x} \in \R^d$.
Furthermore, $P(\Delta X_{2, \delta} \Delta X_{1, \delta} < 0) = P(T_{1, \delta}(X) = 1)  \xrightarrow{\delta \rightarrow 0} 1/2$ independent of the initial distribution of $\mathbf{X}$ by Theorem \ref{Thm1stResult} and thus it holds that $P(\Delta X_{2,\delta} \Delta X_{1,\delta} < 0)^{-1} \xrightarrow{\delta \rightarrow 0} 2$, which together with \eqref{eqProof2ndMainResult} proves the assertion.
\end{proof}

\section{The estimation method}
\label{SectionTheEstimationMethod}

In the SDE model \eqref{eqGenSDEIntro}, i.e.,
\begin{equation*}
\text{d}\mathbf{X}_t = \mathbf{F}(\mathbf{X}_t) \, \text{d}t + \mathbf{A} \, \text{d}\mathbf{W}_t, 
\end{equation*}
$t \in \R_0^+$, having $d$-th component \eqref{eqRelevantSDEIntro}, i.e.,
\begin{equation*}
\text{d} X_t \coloneq \text{d}\mathbf{X}^{(d)}_t = F(\mathbf{X}_t) \, \text{d}t + A \, \text{d} W_t,
\end{equation*}
we want to estimate the diffusion constant $A$ based on available time series data $x_0, x_{\Delta},\ldots,$ $x_{N \Delta} \in \R$, $N \in \N$, where $\Delta > 0$ is the step size. As before, suppose that Assumption \ref{Assumptions} holds. Regardless of the origin of the available data, we want to use the classical estimator based on the quadratic variation of the path given by
\begin{equation}
\label{eqEstimator}
\hat{A} \coloneq \sqrt{\frac{\sum_{n=0}^{N-1} (x_{(n+1)\Delta}-x_{n \Delta})^2}{\Delta N}}.
\end{equation}
Because of the data-model compatibility being restricted to specific scales in many applications as described in the introduction, our method aims to find an appropriate scale and then subsample the data accordingly. To do this, we take advantage of the fact that Theorem \ref{Thm1stResult} tells us what the distribution of monotone runs in the target model \eqref{eqRelevantSDEIntro} looks like on the infinitesimal scale. The idea is to compare characteristics of the empirical distribution of monotone runs of available data for a range of subsampling rates with the theoretical counterparts of the target model. A particularly suitable indicator is the arithmetic mean of the lengths of monotone runs, as we know its limit for the infinitesimal scale of the model under the assumption of ergodicity according to Theorem \ref{Thm2ndResult}. 

Let $k \ll N$ be a fixed subsampling factor. Since we want to use all available information, we consider all $k$ disjoint subsets of the original data set in which the data have a step size of $k \Delta$, namely
\begin{align*}
\mathcal{X}_0 &\coloneq \lbrace x_0, x_{k \Delta}, \ldots, x_{k \lfloor N/k \rfloor \Delta} \rbrace, \, \mathcal{X}_1 \coloneq \lbrace x_{\Delta}, x_{(1+k) \Delta}, \ldots, x_{(1+k \lfloor (N-1)/k \rfloor) \Delta} \rbrace, \ldots ,\\
\mathcal{X}_{k-1} &\coloneq \lbrace x_{(k-1)\Delta}, x_{(2k-1) \Delta}, \ldots, x_{(k-1+k \lfloor (N-k+1)/k \rfloor) \Delta} \rbrace.
\end{align*}
For each subset $\mathcal{X}_i$, the recurrence times $T_j(\mathcal{X}_i)$ and thus its monotone runs can be determined. Let $M(k)$ be the total number of monotone runs of all sub-datasets. There are $N+1-k$ increments in total, so the mean length of monotone runs of the entire data set for the subsampling factor $k$ is given by
\begin{equation*}
L(k) \coloneq \frac{N+1-k}{M(k)}.
\end{equation*}
This procedure can be performed for a range of subsampling factors $1,\ldots,k_{\text{max}}$, where $k_{\text{max}} \ll N$ should be such that $k_{\text{max}} \Delta$ still separates the stochastic and deterministic dynamics in \eqref{eqGenSDEIntro} from each other. For data originating from the target model \eqref{eqRelevantSDEIntro}, i.e., being a discrete-time sampling of its solution process, and $N$ sufficiently large, the mean length of monotone runs should be approximately 2, according to our analytical result given by Theorem \ref{Thm2ndResult}. So, in order to fit the target model to the available data, we search for subsampling factors $k \in \lbrace 1,\ldots, k_{\text{max}} \rbrace$ for which the mean length of monotone runs $L(k)$ is close to $2$. We adjust the estimator $\hat{A}$ given by \eqref{eqEstimator} for the diffusion constant so that it takes into account all increments with step size $k \Delta$ for a subsampling factor $k$ by defining
\begin{equation}
\label{eqEstimatorDepk}
\hat{A}(k) \coloneq \sqrt{\frac{\sum_{n=0}^{N-k} (x_{(n+k)\Delta}-x_{n \Delta})^2}{k\Delta (N+1-k)}}.
\end{equation}
Thus, if we have an optimal subsampling factor $k^* \in \lbrace 1, \ldots, k_{\text{max}} \rbrace$ with $L(k^*) \approx 2$, we can calculate the corresponding estimate $\hat{A}(k^*)$.

In general, we are not interested in a single $k$ for which $\vert L(k) -2 \vert$ is minimal, but rather in intervals for $k$ for which $L(k)$ lies within a neighborhood of $2$ with a small radius $r$. By choosing $k^*$ as the starting point of a suitable interval, we want to ensure that $k^*$ is not chosen on the basis of an outlier of $L$. We denote the length of the interval starting at $k$ and corresponding to an as long as possible tube for the curve of $L$ around $2$ with radius $r$ by $l \coloneq l(k,r)$. In summary, we are looking for a suitable subsampling factor $k$ that marks the beginning of a long tube with a small radius around $2$ in which the mean length of monotone runs remains. Figure \ref{figMethodScheme} illustrates this concept schematically. Of course, there is a trade-off between small tube radius and long tube length. To determine an optimal subsampling factor $k^*$, we therefore suggest the following procedure. First, we determine all Pareto-efficient combinations consisting of an as small as possible subsampling factor (and starting point of a tube) $k$, an as small as possible tube radius $r$, and an as large as possible corresponding tube length $l$. From the set $\mathcal{P}$ of these Pareto-efficient combinations, we want to use an automated process to select one that represents a balance between tube radius $r$ and its length $l$. To this end, we maximize the harmonic mean of the inverse tube radius and the tube length, i.e., the function
\begin{equation*}
h(r,l ) \coloneq \frac{2}{r+\tfrac{1}{l}} = \frac{2l}{rl+1}
\end{equation*}
on the set $\mathcal{P}$. The subsampling factor of this combination then is $k^*$.

\begin{figure}[t]
\begin{center}
\includegraphics[width=0.8\linewidth]{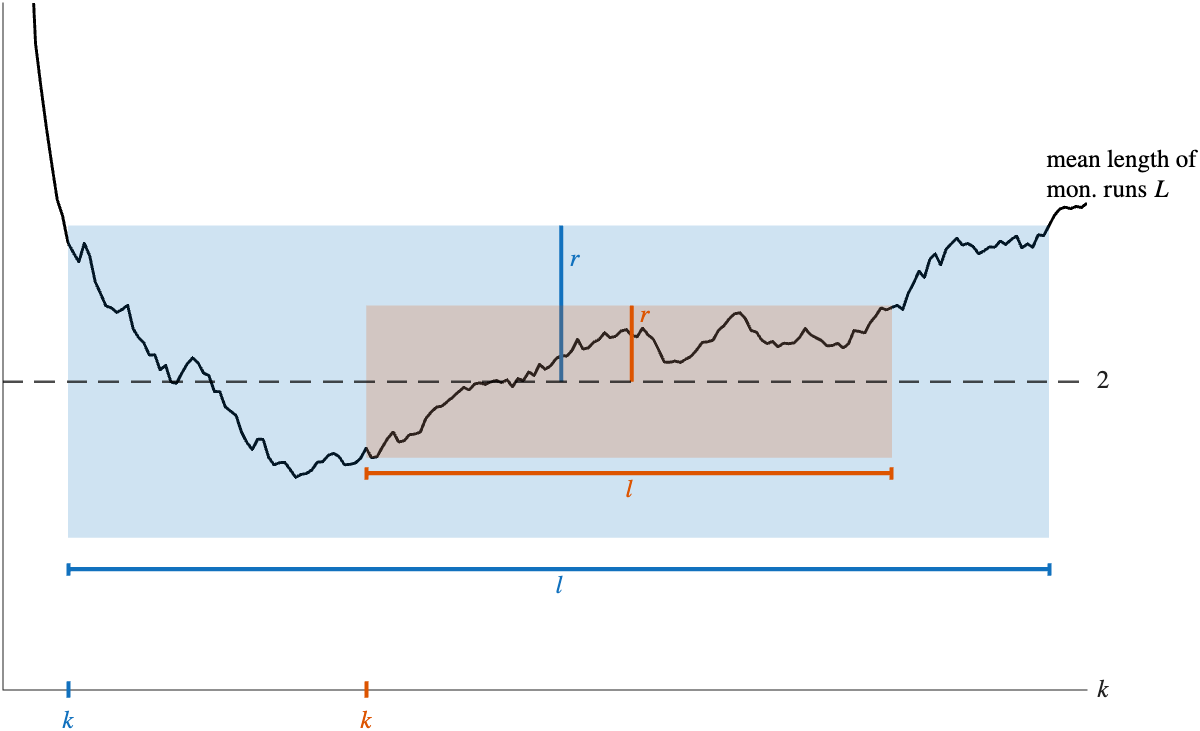} 
\end{center}
\caption{Mean length of monotone runs $L$ in dependence on the subsampling factor $k$ and two sets of exemplary Pareto-efficient combinations of subsampling factors $k$, corresponding tube radii $r$ and tube lengths $l = l(k,r)$.}
\label{figMethodScheme}
\end{figure}

Our method is summarized by the following algorithm.

\begin{algorithm}[Subsampling rate selection and estimation of the diffusion constant] 
\label{AlgorithmFullProcedure}
\ \\
\\
\textbf{Output:} Estimate $\hat{A}^*$ for diffusion constant $A$, corresponding optimal subsampling factor $k^*$, corresponding tube radius $r^*$, corresponding tube length $l^*$. \\
\ \\
\textbf{Input:} Data $\lbrace x_{n \Delta}, n = 0,\ldots,N \rbrace$ with step size $\Delta$,  maximal subsampling factor $k_{\text{max}}$.

\begin{itemize}
\item[1.] For every subsampling factor $k = 1,...,k_{\text{max}}$ consider the sub-datasets 
\begin{equation*}
\mathcal{X}^{(k)}_j \coloneq \lbrace x_{j \Delta}, x_{(j+k) \Delta}, \ldots, x_{(j+k \lfloor (N-j)/k \rfloor) \Delta} \rbrace,
\end{equation*}
$j = 0,\ldots,k-1$, determine the total number $M(k)$ of the monotone runs in all $\mathcal{X}^{(k)}_j$ and determine the corresponding mean length
\begin{equation*}
L(k) = \frac{N+1-k}{M(k)}.
\end{equation*}
\item[2.] Determine the set $\mathcal{P}$ of Pareto-efficient combinations of subsampling factors $k$, tube radii $r$ and tube lengths $l(k,r)$, such that $L(k+i) \in [2-r,2+r]$ for $i = 0,1,\ldots,l(k,r)-1$ (here, $k$ and $r$ should be small while $l(k,r)$ should be large).
\item[3.] Determine
\begin{equation*}
(k^*,r^*,l^*) = \argmax_{(k,r,l) \in \mathcal{P}} h(r,l) = \argmax_{(k,r,l) \in \mathcal{P}} \frac{2\ell}{rl +1}.
\end{equation*}
\item[4.] Calculate
\begin{equation*}
\hat{A}^* \coloneq \hat{A}(k^*) = \sqrt{\frac{\sum_{n=0}^{N-k^*} (x_{(n+k^*)\Delta}-x_{n \Delta})^2}{k^*\Delta (N+1-k^*)}}.
\end{equation*}
\item[5.] Return $\hat{A}^*, k^*, r^*, l^*$.
\end{itemize}
\end{algorithm}

\begin{remark}[Alternative indicators]
\label{RemarkAlternativeIndicators}
Since Theorem \ref{Thm1stResult} gives the full asymptotic distribution of the first length of monotone runs, one can construct additional indicators for the optimal subsampling factor $k^*$. An apparent alternative to the mean length of monotone runs is to consider the probability for monotone runs having a certain length. Let $m \in \N$, $\delta > 0$ and $X$ be a solution process of the target model \eqref{eqRelevantSDEIntro} in the sense of Assumption \ref{Assumptions}, then, by Theorem \ref{Thm1stResult}, for the probability that the first monotone run has a length of $m$ it holds that
\begin{equation*}
P(T_{1,\delta}(X) = m) \xrightarrow{\delta \rightarrow 0} \frac{1}{2^m}.
\end{equation*}
On the other hand, for a data set $x_0, x_{\Delta}, \ldots, x_{N \Delta}$ or for each subset at step size $k$, i.e., for each $\mathcal{X}^{(k)}_j = \lbrace x_{j \Delta}, x_{(j+k) \Delta}, \ldots, x_{(j+k \lfloor (N-j)/k \rfloor) \Delta} \rbrace$, the relative frequency of monotone runs of length $m$ can be determined. It is reasonable to compare the relative frequency analogously to the mean length of the monotone runs as a function of the subsampling factor $k$ with the theoretical value of $2^{-m}$ of the target model and to select a $k$ for which both quantities show good agreement. The general idea of comparing empirical parameters with parameters of the distribution of the first length of monotone runs in the target model may also be extended to variance, for example. Such considerations could possibly be extended to the construction of asymptotic confidence intervals for the expected value, but will not be discussed further in this paper.
\end{remark}

\begin{remark}[Extrema quadratic variation]
\label{RemarkExtQV}
Closely linked to the consideration of monotone runs is the extrema quadratic variation (ExtQV) of the sample, which was proposed and investigated in \cite{Manikas2018,Manikas2019}. Instead of squaring each individual increment in the data, as in the estimator based on quadratic variation, one can determine and square the total increment of each monotone run. The sum of these squared monotone run increments divided by the length $T$ of the time period over which the sampled process was observed yields the (relative) ExtQV of the sample. However, in \cite{Manikas2018,Manikas2019} the application to the homogenization case is considered and the estimation of the diffusion constant in the limit $\varepsilon \rightarrow 0$ and $\delta \rightarrow 0$ is investigated. Consequently, this approach is particularly aimed at situations involving very fine-grained data. Instead, we propose the following modification, regardless of the origin and resolution of the data. For a realization of a Brownian motion path the limit of the expected ExtQV, as $\delta \rightarrow 0$, is given by $1+\tfrac{4}{\pi}$, see \cite[Proposition 7.1]{Manikas2018}. Provided that this limit value remains valid in the case of a nonzero drift, the idea is to correct the ExtQV of the available data by the factor $(1+\tfrac{4}{\pi})^{-1}$ in order to obtain the squared diffusion constant $A^2$ and to use the resulting estimator in a similar way as $\hat{A}$ in the procedure described in Algorithm \ref{AlgorithmFullProcedure}. The key difference to the approach in \cite{Manikas2018,Manikas2019} is therefore that we want to use the factor $(V^{\infty})^{-1}$ to apply the ExtQV for estimating $A^2$ to situations that exceed $\varepsilon \rightarrow 0$ in the homogenization setting. In the present article, we do not investigate this approach further.
\end{remark}

\section{Application to fiber lay-down processes}
\label{SectionApplication}

In this section, we present an application of our method to a problem in the industrial production of nonwoven textiles. In the production process a fiber is spun from a molten granular and then entangled by acting turbulent air flows while laying down on a moving conveyer belt. Stochastic models based on SDEs for the lay-down curves of the fibers on the conveyor belt allow efficient numerical fiber simulations and in this context replace numerically complex models based on partial differential equations. Coupling these different models requires adapting the parameters of the stochastic model. This application is simulated fiber data is available and a suitable homogenization setting is known, with the help of which the method can be systematically investigated.

\subsection{Stochastic fiber lay-down models}
\label{SubsectionIntroApplication}

In \cite{Goetz2007} the \textit{basic model}
\begin{align}
\text{d} \boldsymbol \xi_t &= \boldsymbol \tau (\alpha_t) \: \text{d}t + v \mathbf{e}_1 \: \text{d}t \nonumber \\
\text{d} \alpha_t &= - \nabla V(\boldsymbol \xi_t) \cdot \boldsymbol \tau^\bot (\alpha_t) \: \text{d}t + A \: \text{d} W_t,
\label{eqBasicModel}
\end{align}
$t \in R_0^+$, with initial conditions $\boldsymbol \xi_0 \in \R^2$ and $\alpha_0 \in [0, 2\pi]$, was formulated. Here, $\boldsymbol\xi = \boldsymbol\eta - \boldsymbol\gamma$ denotes the difference of the arclength parameterized fiber curve $\boldsymbol\eta \colon \R_0^+ \times \Omega \rightarrow \R^2$ and the reference curve $\boldsymbol\gamma$ specifying the conveyer belt movement by $\boldsymbol\gamma_t = -v t \mathbf{e}_1$, where $v = v_{\text{belt}} / v_{\text{in}} \in [0,1]$ is the ratio between the belt speed $v_{\text{belt}}$ and the speed of the fiber production $v_{\text{in}}$.
The corresponding angles of the actual fiber curve $\boldsymbol\eta$ with respect to the direction of the conveyer belt movement are given by $\alpha: \R_0^+ \times \Omega \rightarrow \R$. By $\boldsymbol\tau(\alpha) \coloneqq (\cos \alpha, \sin \alpha)^\top$, we introduce the normalized tangent on the fiber, so that $\boldsymbol\tau^{\bot}(\alpha) \coloneqq (-\sin \alpha, \cos \alpha)^\top$ is the corresponding orthonormal polar unit vector. The drift $-\nabla V(\boldsymbol \xi) \cdot \boldsymbol \tau^\bot (\alpha)$ ensures that the fiber tends back to the origin, where the potential $V \colon \R^2 \rightarrow \R$ determines how wide the fiber can spread. A typical choice is 
\begin{equation*}
V(\boldsymbol\xi) = \frac{1}{2} \boldsymbol\xi^\top \mathbf{C}^{-1} \boldsymbol\xi
\end{equation*}
with positive-definite matrix $\mathbf{C} \in \R^{2 \times 2}$ giving a standard model for the buckling behavior of the fiber. The random effects of the production process, e.g., resulting from the turbulent flow during the fiber spinning and lay-down, are summarized in a one-dimensional standard Brownian motion $W$ with diffusion constant $A \in \R^+$.

By choosing $\mathbf{X}_t = (\boldsymbol \xi_t, \alpha_t) = ( \xi_t^{(1)},\xi_t^{(2)}, \alpha_t)$,
\begin{equation*}
\mathbf{F}(\mathbf{x}) = \mathbf{F}(x_1,x_2,x_3) = \begin{pmatrix}
\cos(x_3) + v \\
\sin(x_3) \\
-\nabla V(x_1, x_2) \cdot (-\sin(x_3), \cos(x_3))^\top
\end{pmatrix}
\end{equation*}
and $\mathbf{A} = \text{diag}(0,0,A)$, the basic model can be expressed in the notation of our general setting. In the case of $V(\boldsymbol\xi) = \tfrac{1}{2} \boldsymbol\xi^\top \, \mathbf{C}^{-1} \boldsymbol\xi$, the drift coefficient $\mathbf{F}$ is locally Lipschitz continuous and satisfies a linear growth condition implying the existence of a unique solution in the sense of Assumption \ref{Assumptions}. Regarding the ergodicity assumption of Theorem \ref{Thm2ndResult}, we use the fact that in \cite{Kolb2013} the fiber lay-down process described by \eqref{eqBasicModel} was shown to possess the strong Markov property. In particular, every sample $((\boldsymbol\xi, \alpha)_{n \Delta})_{n \in \N_0}$, $\Delta > 0$, is a Markov chain. At least for some special cases such as $v=0$, i.e., for a stationary conveyor belt, or $\mathbf{C}= \text{diag}(\sigma^2, \sigma^2)$ in the potential $V$, the existence of a unique invariant measure is known, see, e.g., \cite[Theorem 3.1]{Kolb2013}. Since, according to \cite[Proposition 2.3]{Kolb2013}, $((\boldsymbol\xi, \alpha)_{n \Delta})_{n \in \N_0}$ is also irreducible for every $\Delta > 0$, we can conclude by Lemma \ref{LemmaIrreducibility} that these skeleton chains are ergodic in the special cases mentioned above. Numerical tests suggest that we can apply our method also in the case $v>0$ and $\mathbf{C} = \text{diag}(\sigma_1^2, \sigma_2^2)$ with $\sigma_1 \neq \sigma_2$.

In a typical application, the ratio $v$ between conveyor belt speed and production speed is known, while the parameters of the potential $V$ and the diffusion constant $A$ must be estimated. The method used to this point for estimating these parameters in the special case of $V(\boldsymbol\xi) = \tfrac{1}{2} \boldsymbol\xi^\top \, \mathbf{C}^{-1} \boldsymbol\xi$, $\mathbf{C} = \text{diag}(\sigma_1^2, \sigma_2^2)$, is described in \cite{Klar2009,Grothaus2014}. We build on this and improve the estimation of $A$ by applying the method described by Algorithm \ref{AlgorithmFullProcedure}. An alternative approach for the estimation of the parameters in the basic model is discussed in \cite{Bock2018}. It is based on expected occupation times, approximated using Monte Carlo simulations. As a consequence, the accuracy of the estimates depend on the number of sample paths while our approach is designed to be applied to data from a single path.

In addition to the advantage of being able to test our approach directly on simulated fiber data, for stochastic fiber lay-down models also a homogenization setting can be formulated. This allows data to be generated using a classical multiscale diffusion with the advantage of knowing the true values of the parameters to be estimated in the basic model, at least asymptotically.
For this purpose, the \textit{improved smooth model} can be defined by
\begin{align}
\text{d} \boldsymbol\xi_t^{(\varepsilon)} &= \boldsymbol\tau(\alpha_t^{(\varepsilon)}) \: \text{d}t +v \mathbf{e}_1 \: \text{d}t \nonumber \\ 
\text{d} \alpha_t^{(\varepsilon)} &= -\nabla V(\boldsymbol\xi_t^{(\varepsilon)}) \cdot \boldsymbol\tau^\perp(\alpha_t^{(\varepsilon)}) \: \text{d}t + \frac{\kappa_t^{(\varepsilon)}}{\varepsilon} \: \text{d}t \nonumber \\
\text{d} \kappa_t^{(\varepsilon)} &= -\frac{1}{\varepsilon^2 R} \kappa_t^{(\varepsilon)} \: \text{d}t + \frac{K}{\varepsilon} \: \text{d} W_t, \label{eqSmoothModel}
\end{align}
equipped with appropriate initial conditions $\boldsymbol\xi_0 \in \R^2$, $\alpha_0 \in [0, 2\pi]$ and $\kappa_0 \in \R$. This formulation can be found in \cite{Maringer2013} as a slightly modified version of the original formulation in \cite{Herty2009}. By the third equation the curvature $\kappa$ of the fiber curve is described. Here, $R \in \R^+$ expresses the stiffness of the fiber, where larger values imply larger loops performed by the fiber. As usual in the homogenization setting, the solution $(\boldsymbol\xi^{(\varepsilon)}, \alpha^{(\varepsilon)})$ of the improved smooth model converges weakly to the solution of the basic model as $\varepsilon \rightarrow 0$, where the diffusion constant of the basic model is given by the product of the new diffusion constant $K \in \R_0^+$ and the stiffness parameter $R$, i.e., $A = RK$. For a proof of the respective statement for the original formulation of the smooth model, see \cite{Herty2009}. 

\begin{remark}[Reconstruction of angular data]
\label{RemarkAngularRec}
In practice, typically only fiber data $\boldsymbol\eta_{n \Delta}$ is provided, where $n \in \lbrace 0,\ldots,N \rbrace$, $N \in \N$ and $\Delta > 0$. This is equivalent to having data $\boldsymbol\xi_{n \Delta}$ corresponding to the $\boldsymbol\xi$ component in the basic model, since $\boldsymbol\eta$ and $\boldsymbol\xi$ can be calculated directly from each other using the known value of the ratio $v$ between conveyor belt speed and production speed. Since the diffusion constant $A$ is part of the equation for $\alpha$, angular data $\alpha_{n \Delta}$, $n \in \lbrace 0,\ldots,N-1 \rbrace$, is required in order to estimate $A$ via $\hat{A}$ given by \eqref{eqEstimator}. For the reconstruction of angular data we proceed as follows:
\begin{itemize}
\item[1.] We determine the slopes of the secants through successive points $\boldsymbol\eta_{(n+1)\Delta}$ and $\boldsymbol\eta_{n \Delta}$, and thus obtain preliminary angles $\tilde{\alpha}_{n \Delta} \in [0, 2 \pi)$.
\item[2.] The corresponding angular increments lie in $(-2 \pi, 2 \pi)$ and lead to an overestimation of the diffusion constant, since consecutive angles exceeding the $360^\circ$ threshold resulting in increments of large magnitude are included. For this reason, we correct the preliminary angles for each increment where we suspect that the $360^\circ$ threshold has been exceeded. The best assumption we can make is that this always holds for increments with absolute value greater than $\pi$. We then shift the affected angle and all subsequent angles by $2 \pi$ so that the corresponding angular increments fall below $\pi$ in absolute value.
\end{itemize}
\end{remark}

\subsection{Numerical tests}
\label{SubsectionNumericalTests}

In this subsection, we present some observations made in numerical tests on our method (Algorithm \ref{AlgorithmFullProcedure}) based on synthetic data from Euler approximations of the basic model itself \eqref{eqBasicModel} and the improved smooth model \eqref{eqSmoothModel}. In the following we always choose $\mathbf{C} = \text{diag}(\sigma_1^2, \sigma_2^2)$, $\sigma_1, \sigma_2 \in \R^+$, for the potential $V(\boldsymbol\xi) = \tfrac{1}{2} \boldsymbol\xi^\top \, \mathbf{C}^{-1} \boldsymbol\xi$ in both models. In this context, $\sigma_1$ and $\sigma_2$ are also referred to as \textit{throwing ranges}. In addition, we work with a dimensionless formulation of the models, which are obtained by the scaled parameters $\overline{\sigma}_1 = 1$, $\overline{\sigma}_2 = \tfrac{\sigma_2}{\sigma_1}$ and $\overline{A} = \sqrt{\sigma_1}A$ or $\overline{R} = \tfrac{R}{\sigma_1}$ and $\overline{K} = \sqrt{\sigma_1^3} K$, see, for example, \cite{Klar2009}. For simplicity, we omit the bar in the following.

We generate data from \eqref{eqBasicModel} and \eqref{eqSmoothModel} using a semi-implicit Euler-Maruyama method. Specifically, for a fixed step size $\Delta > 0$, the approximations $\hat{\alpha}_{n+1}$, $n = 0,1,\ldots$, for $\alpha_{(n+1)\Delta}$ in the basic model and $\hat{\kappa}_{n+1}$ for $\kappa_{(n+1)\Delta}$ in the smooth model are calculated explicitly from $\hat{\alpha}_n$ and $\hat{\kappa}_n$, respectively, while the remaining variables $\hat{\boldsymbol\xi}_{n+1}$ and $\hat{\alpha}_{n+1}$ already rely on $\hat{\alpha}_{n+1}$ and $\hat{\kappa}_{n+1}$ respectively, and thus are treated implicitly at this point in order to achieve slight stability benefits.

Self-consistency tests, i.e., using data from an approximation of the basic model itself confirm that the method works in principle. Of particular interest is the use of the homogenization setting by combining \eqref{eqBasicModel} and \eqref{eqSmoothModel}. Here, we use semi-implicit Euler-Maruyama approximations of the smooth model \eqref{eqSmoothModel} as simulated data to estimate the corresponding limit value (for $\varepsilon \rightarrow 0$) of the diffusion constant $A = RK$ in the basic model \eqref{eqBasicModel}.

\begin{figure}[t]
\begin{center}
\includegraphics[width=0.8\linewidth]{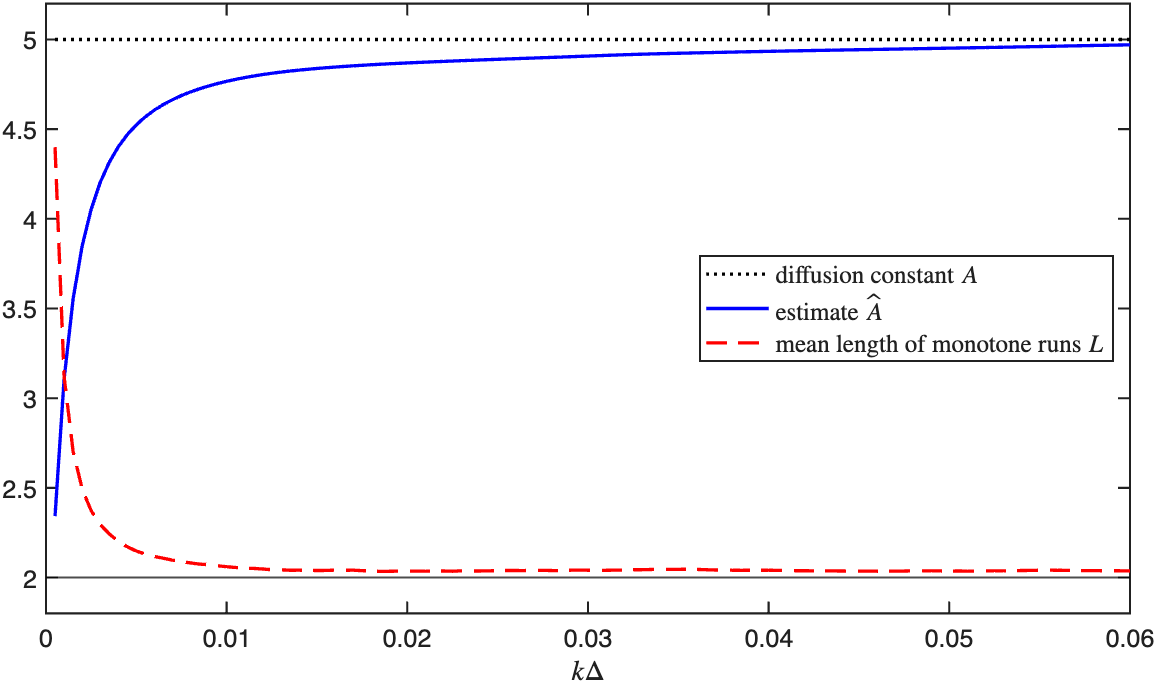} 
\end{center}
\caption{Estimates for the diffusion constant $A$ of the basic model \eqref{eqBasicModel} and mean lengths of monotone runs in dependence on the effective step size $k \Delta$ for data coming from an Euler-Maruyama approximation of the improved smooth model \eqref{eqSmoothModel} having parameters $R = K = \sqrt{5}$, $\varepsilon = 0.02$ and $v = 0.1$.}
\label{figAAMon01}
\end{figure}
\begin{figure}[t]
\begin{minipage}[h]{0.49\linewidth}
\begin{center}
\includegraphics[width=1\linewidth]{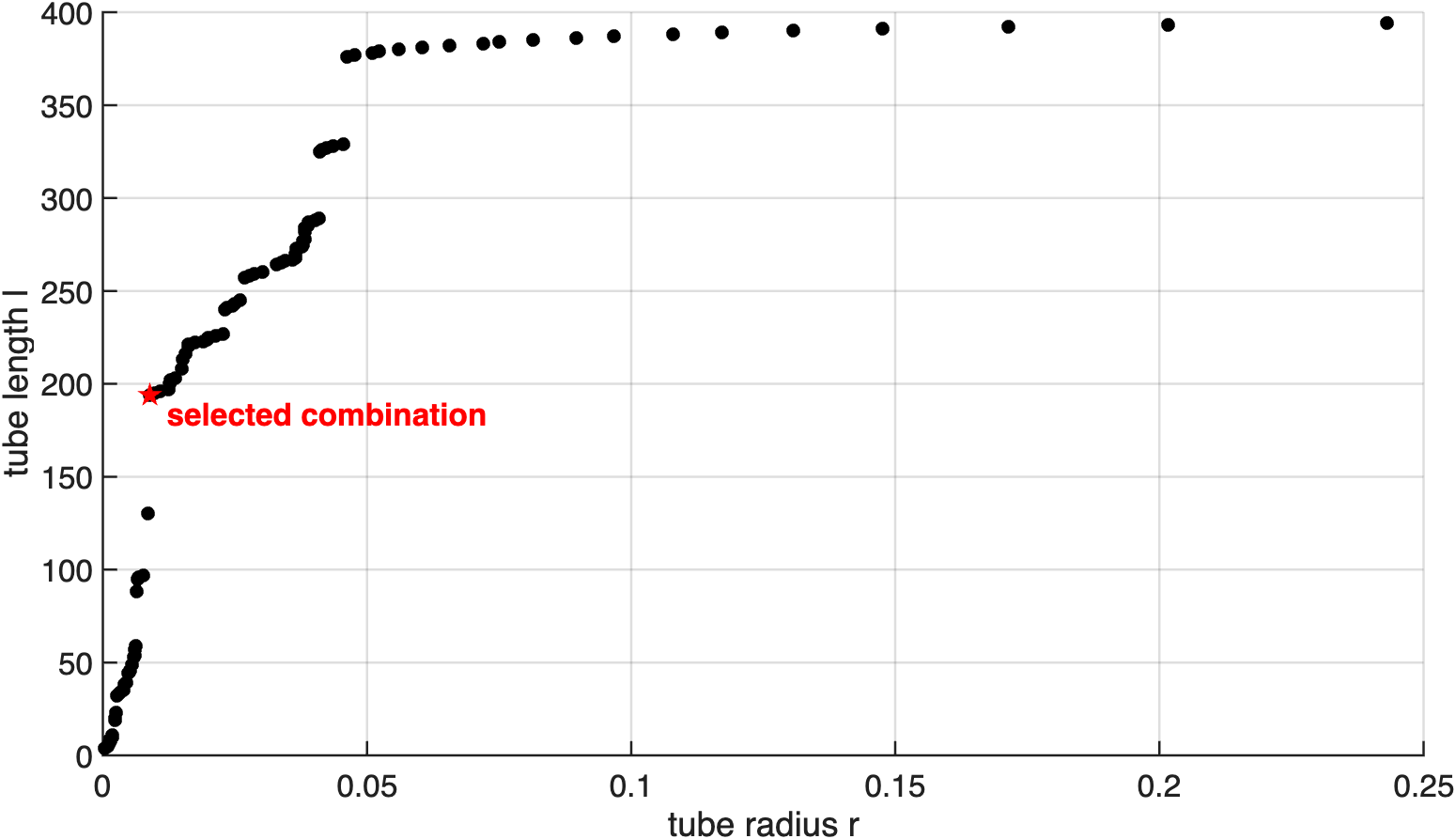} 
\end{center}
\end{minipage}
\hfill
\begin{minipage}[h]{0.49\linewidth}
\begin{center}
\includegraphics[width=1\linewidth]{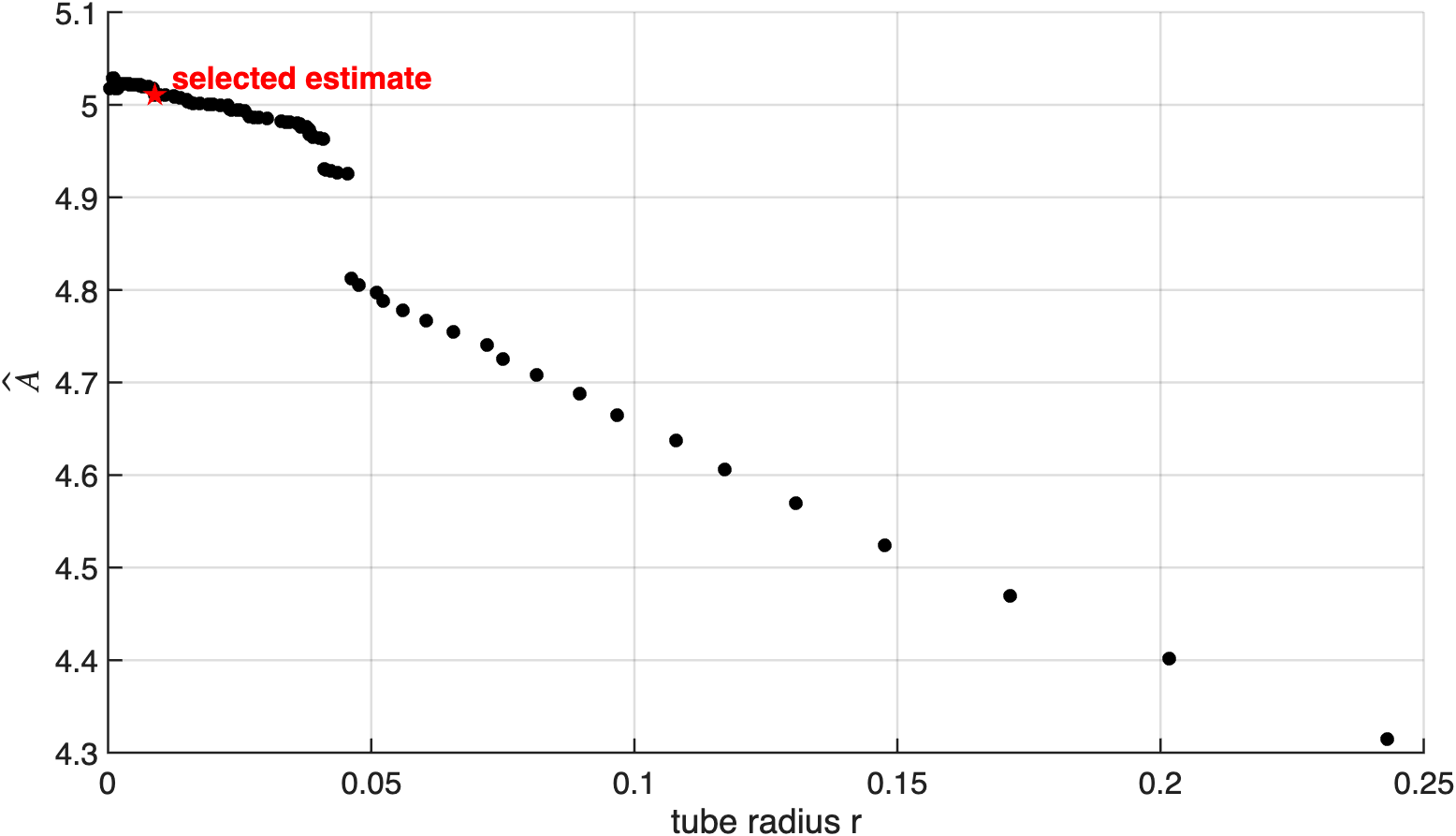} 
\end{center}
\end{minipage}
\caption{Pareto efficient combinations of tube lengths and radii around 2 for the curve of the mean length of monotone runs in the situation of Figure \ref{figAAMon01} (left) and corresponding estimates for the diffusion constant $A$ in dependence on the tube radius (right).}
\label{figParetoFront01}
\end{figure}

For the first example, we choose the parameters $\sigma_1 = \sigma_2= 1$ and $A = 5$, where the latter is composed of $R = \sqrt{A}$ and $K = \sqrt{A}$ in the improved smooth model. We set the scale separation parameter $\varepsilon = 0.02$. For the speed ratio $v$ of conveyor belt speed to production speed, we choose the value $0.1$ and assume that, unlike the other parameters, this is known. We generate $2 \times 10^8$ data points having step size $2\times 10^{-6}$ in order to have a sufficient resolution for the Euler-Maruyama approximation of the small scaled part in the multiscale diffusion. In order to make the situation realistic in terms of practical scenarios, we only use every 250th point from the Euler approximation described above, so that the initial step size of the simulated data is $\Delta = 5\times 10^{-4}$. For the same reason, we discard the angular data from the Euler-Maruyama approximation and instead reconstruct angular increments as described in Remark \ref{RemarkAngularRec}. Figure \ref{figAAMon01} shows the resulting estimates
\begin{equation*}
\hat{A}(k) = \sqrt{\frac{\sum_{n=1}^{N-k} (\alpha_{n+k}-\alpha_n)^2}{k\Delta (N-k)}}
\end{equation*}
for subsampling factors $k \leq 120$ and the corresponding mean lengths $L(k)$ of monotone runs. Here we have the typical observation that the diffusion constant is significantly underestimated without subsampling. Only with increasingly higher subsampling factors do the estimates approach the true value of the diffusion constant of the basic model. Simultaneously, the curve of the mean length of monotone runs falls to the value 2 expected on the basis of Theorem \ref{Thm2ndResult}, thus confirming that a high subsampling factor should be chosen. As described in Algorithm \ref{AlgorithmFullProcedure}, we determine the Pareto-efficient combinations of tube radii and lengths around 2 for the curve of the mean length of monotone runs and select a moderate combination by maximizing the harmonic mean of tube lengths and the corresponding inverse radius. While Figure \ref{figAAMon01} only shows the section for $k \leq 120$, we performed our method with a maximum subsampling factor of $k_{\text{max}} = 400$. The pareto efficient combinations and the selected combination, which in this example is given by $k^* = 207$, $r^* \approx 0.0089$ and $l^* = 194$ leading to an estimate of $\hat{A}^* \approx 5.0103$, are shown in the left hand side of Figure \ref{figParetoFront01}.
In fact, the exact selection from the Pareto-efficient combinations plays a minor role in the estimated value for $A$, as long as a moderate compromise is made between tube length and radius. This can be seen on the right hand side of Figure \ref{figParetoFront01}, where the estimated values for $A$ are plotted for all Pareto combinations in dependence on the tube radius.

In the first example, it appears that the largest possible subsampling factor should be selected. In general, this is not the case, as is evident in the following scenario. Here, a higher conveyor belt speed of $v = 0.4$ results in the best subsampling factor being within a small range of moderate values, see Figure \ref{figAAMon02}. Our method gives the combination $k^* = 21$, $r^* \approx 0.0276$ and $l^* = 30$, and an estimate of $\hat{A}^* \approx 4.7154$. Note that the curve of the monotone runs remains well below $RK = 5$ even at its maximum. Since $A$ and $RK$ coincide only in the limit of $\varepsilon \rightarrow 0$, this indicates that in the situation of a larger value for $v$, the scale separation parameter $\varepsilon$ should be smaller so that \eqref{eqSmoothModel} continues to be a good approximation for \eqref{eqBasicModel}.

\begin{figure}[t]
\begin{center}
\includegraphics[width=0.8\linewidth]{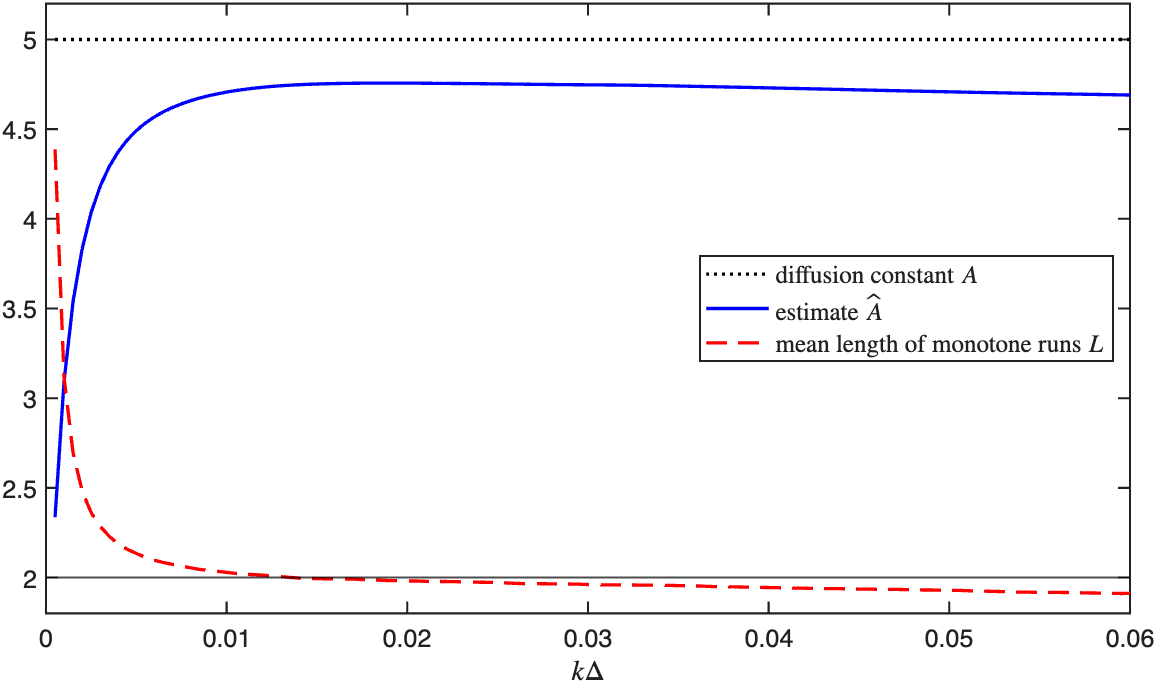} 
\end{center}
\caption{Estimates for the diffusion constant $A$ of the basic model \eqref{eqBasicModel} and mean lengths of monotone runs in dependence on the effective step size $k \Delta$ for data coming from an Euler-Maruyama approximation of the improved smooth model \eqref{eqSmoothModel} having parameters $R = K = \sqrt{5}$, $\varepsilon = 0.02$ and $v = 0.4$.}
\label{figAAMon02}
\end{figure}

\begin{figure}[t]
\begin{minipage}[h]{0.49\linewidth}
\begin{center}
\includegraphics[width=1\linewidth]{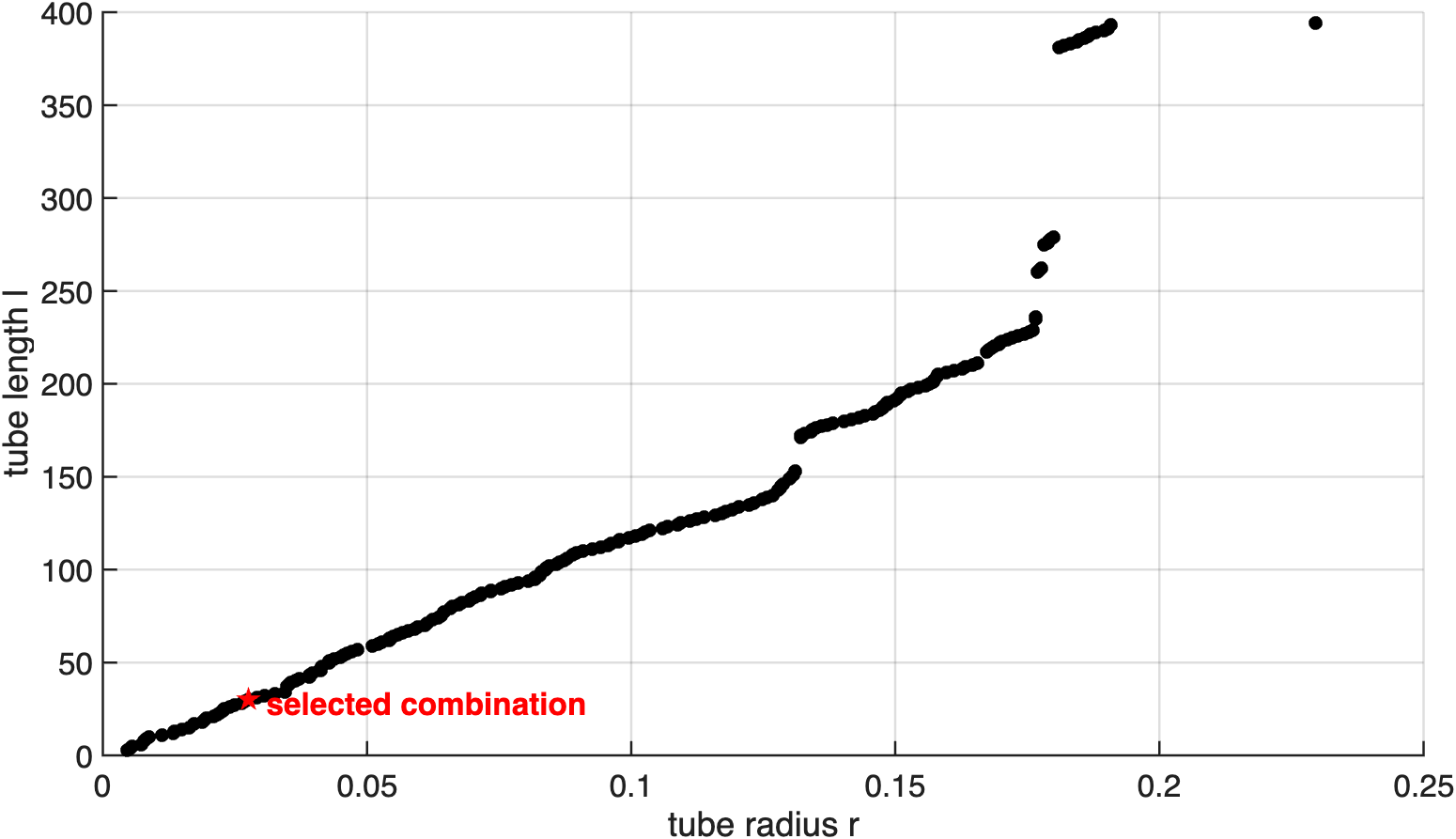} 
\end{center}
\end{minipage}
\hfill
\begin{minipage}[h]{0.49\linewidth}
\begin{center}
\includegraphics[width=1\linewidth]{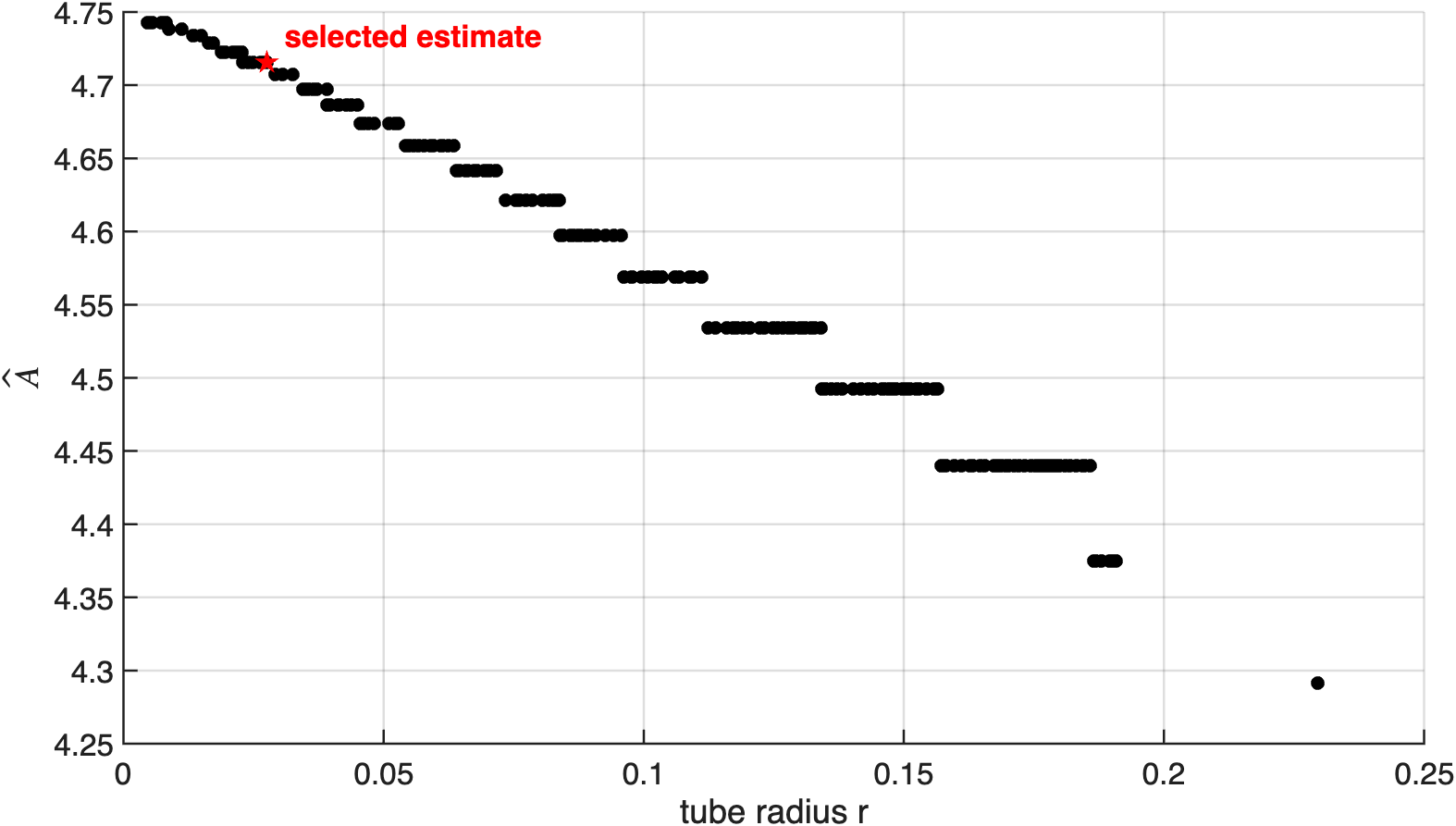} 
\end{center}
\end{minipage}
\caption{Pareto efficient combinations of tube lengths and radii around 2 for the curve of the mean length of monotone runs in the situation of Figure \ref{figAAMon02} (left) and corresponding estimates for the diffusion constant $A$ in dependence on the tube radius (right).}
\label{figParetoFront02}
\end{figure}

Our last example illustrates that in general a simple maximization of $\hat{A}$ is not effective either. In particular, it shows a situation in which too high subsampling factors lead to an overestimation of the diffusion constant. To this end, we set $A$ to $0.5$, while all other parameters remain the same as in the first example. Again, our method finds a suitable subsampling factor $k^* = 8$ with tube radius $r^* \approx 0.0854$ and corresponding tube length $l^* = 22$ giving an estimate of $\hat{A}^* \approx 0.4830$. The corresponding plots can be found in Figures \ref{figAAMon03} and \ref{figParetoFront03}.

\begin{figure}[t]
\begin{center}
\includegraphics[width=0.8\linewidth]{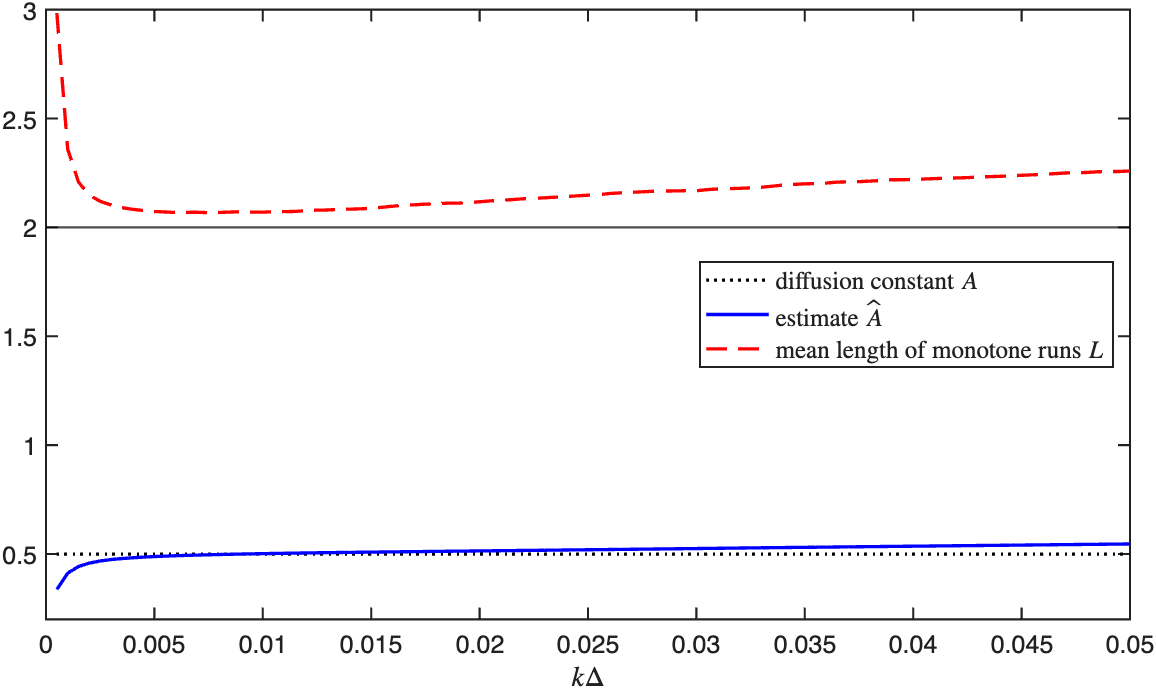} 
\end{center}
\caption{Estimates for the diffusion constant $A$ of the basic model \eqref{eqBasicModel} and mean lengths of monotone runs in dependence on the effective step size $k \Delta$ for data coming from an Euler-Maruyama approximation of the improved smooth model \eqref{eqSmoothModel} having parameters $R = K = \sqrt{0.5}$, $\varepsilon = 0.02$ and $v = 0.1$.}
\label{figAAMon03}
\end{figure}

\begin{figure}[t]
\begin{minipage}[h]{0.49\linewidth}
\begin{center}
\includegraphics[width=1\linewidth]{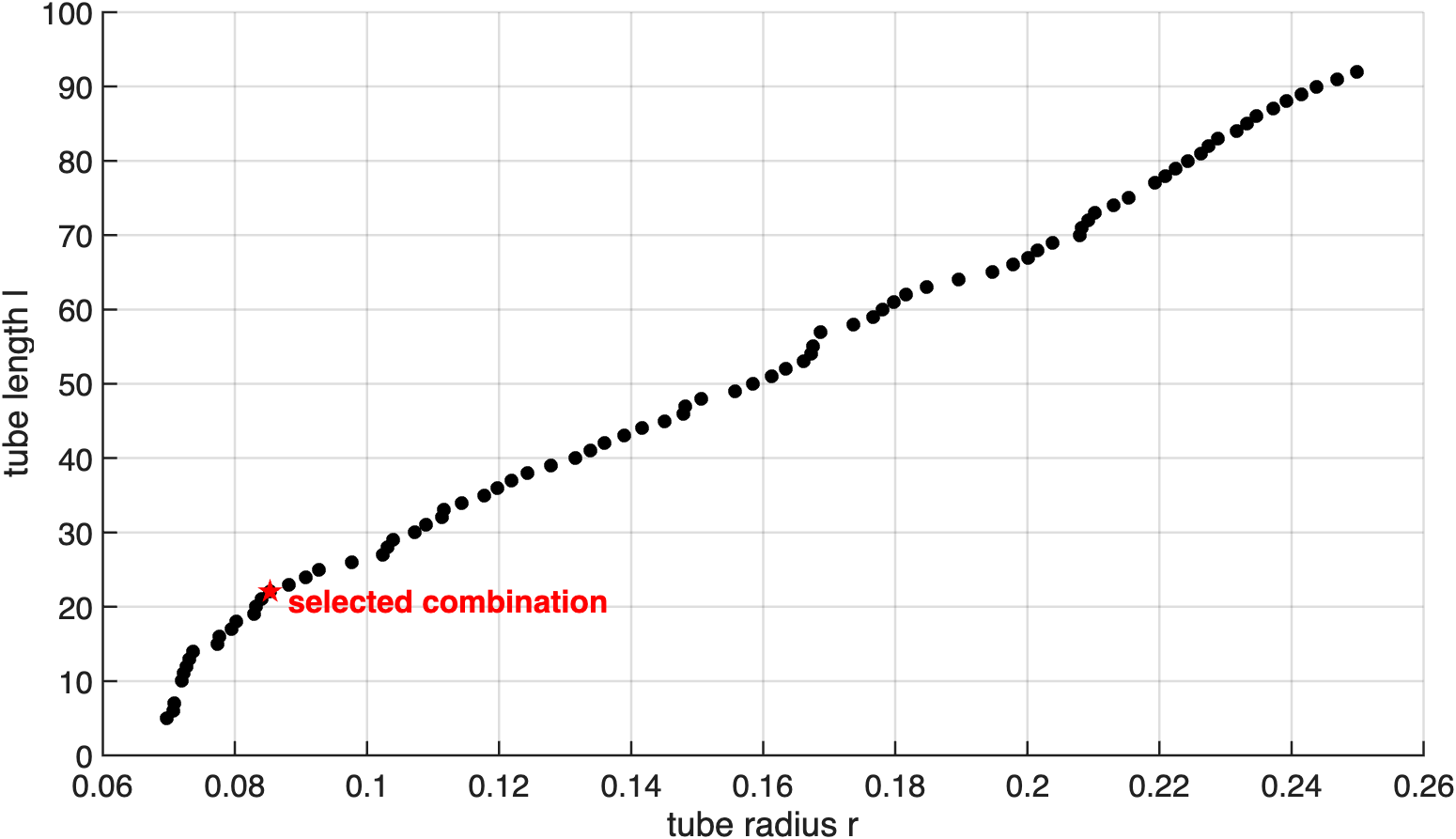} 
\end{center}
\end{minipage}
\hfill
\begin{minipage}[h]{0.49\linewidth}
\begin{center}
\includegraphics[width=1\linewidth]{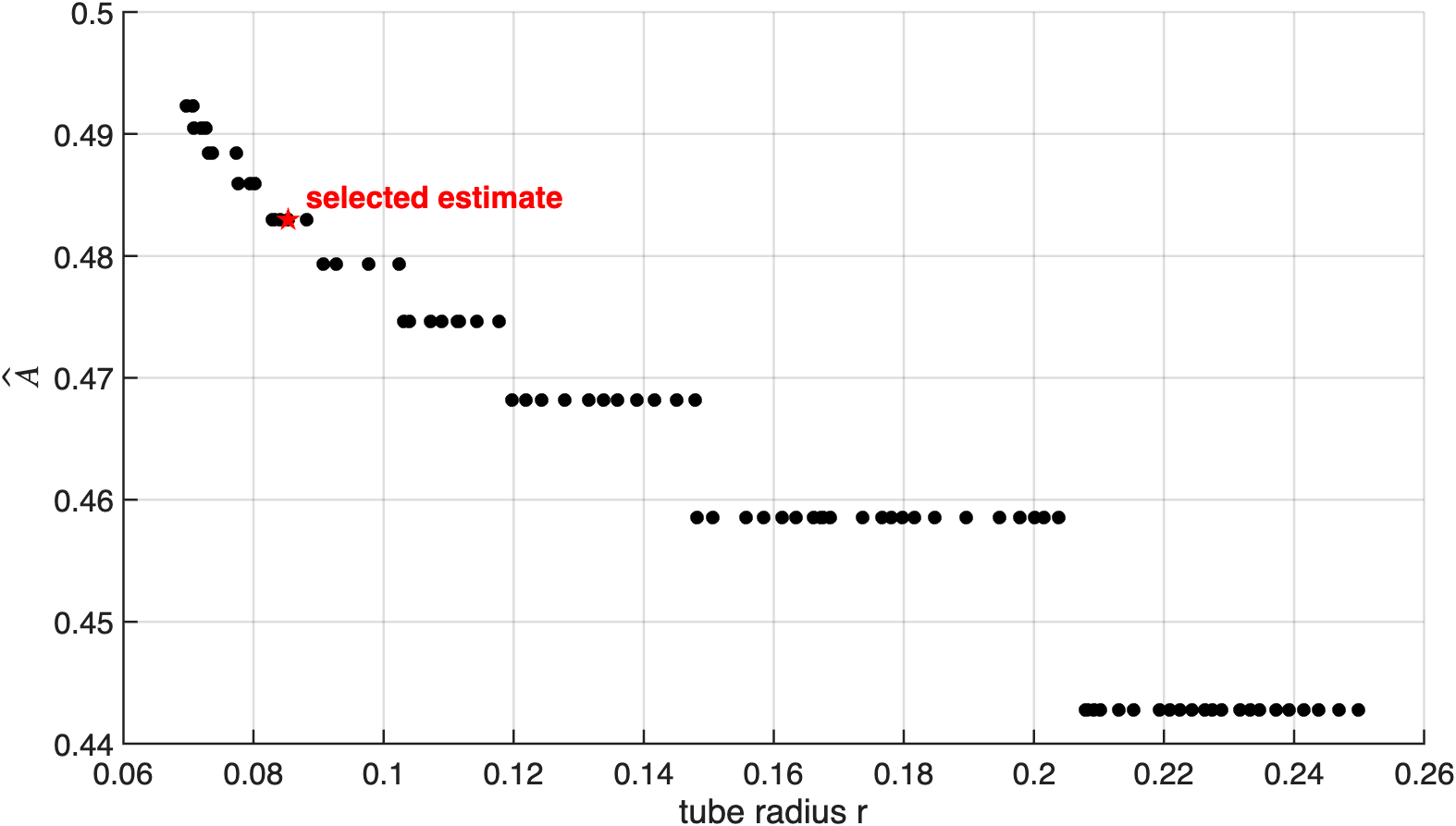} 
\end{center}
\end{minipage}
\caption{Pareto efficient combinations of tube lengths and radii around 2 for the curve of the mean length of monotone runs in the situation of Figure \ref{figAAMon03} (left) and corresponding estimates for the diffusion constant $A$ in dependence on the tube radius (right).}
\label{figParetoFront03}
\end{figure}

\subsection{Real-world application}
\label{SubsectionCompleteProcedure}

In this subsection, we address the real-world application of our method to the fiber lay-down setting. The data used here are now obtained from the FIDYST tool developed at Fraunhofer ITWM, which is based on partial differential equations and numerically simulates a single fiber for specified production parameters at comparatively high computational cost, see \cite{Klar2009,Marheineke2011,WebITWM}. In practice, the stochastic basic model \eqref{eqBasicModel} is therefore intended to be used for the efficient simulation of a large number of fibers that coincide qualitatively and quantitatively well with the computationally expensive simulated fiber data. To this end, the drift parameters as well as the diffusion constant of the basic model must be adapted to the respective simulated fiber data using an automated process.

\begin{figure}[t]
\begin{center}
\includegraphics[width=0.8\linewidth]{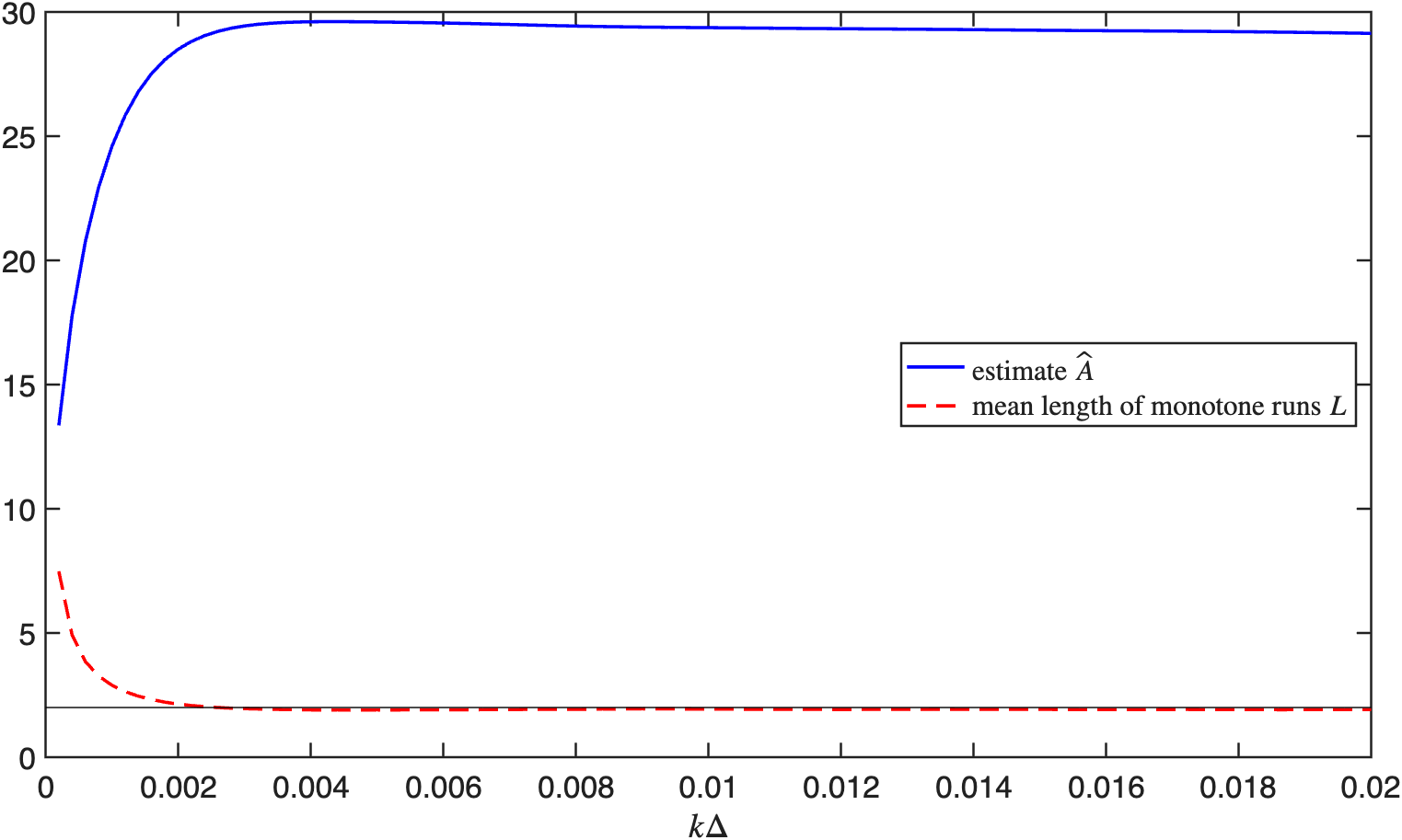} 
\end{center}
\caption{Real-world example for estimates for the diffusion constant $A$ of the basic model \eqref{eqBasicModel} and mean lengths of monotone runs in dependence on the effective step size $k \Delta$.}
\label{figAAMonFidyst}
\end{figure}

\begin{figure}[t]
\begin{minipage}[h]{0.49\linewidth}
\begin{center}
\includegraphics[width=1\linewidth]{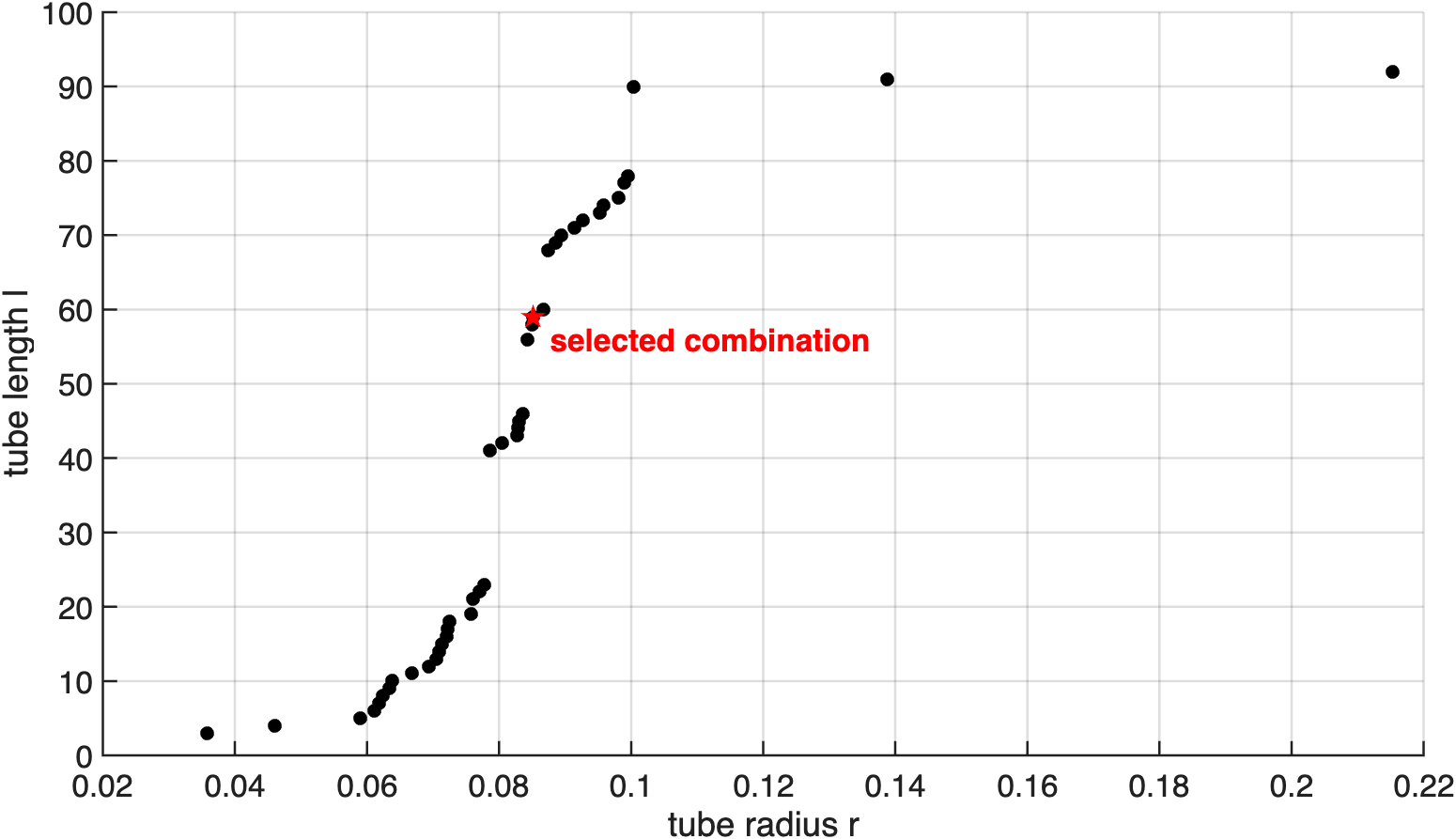} 
\end{center}
\end{minipage}
\hfill
\begin{minipage}[h]{0.49\linewidth}
\begin{center}
\includegraphics[width=1\linewidth]{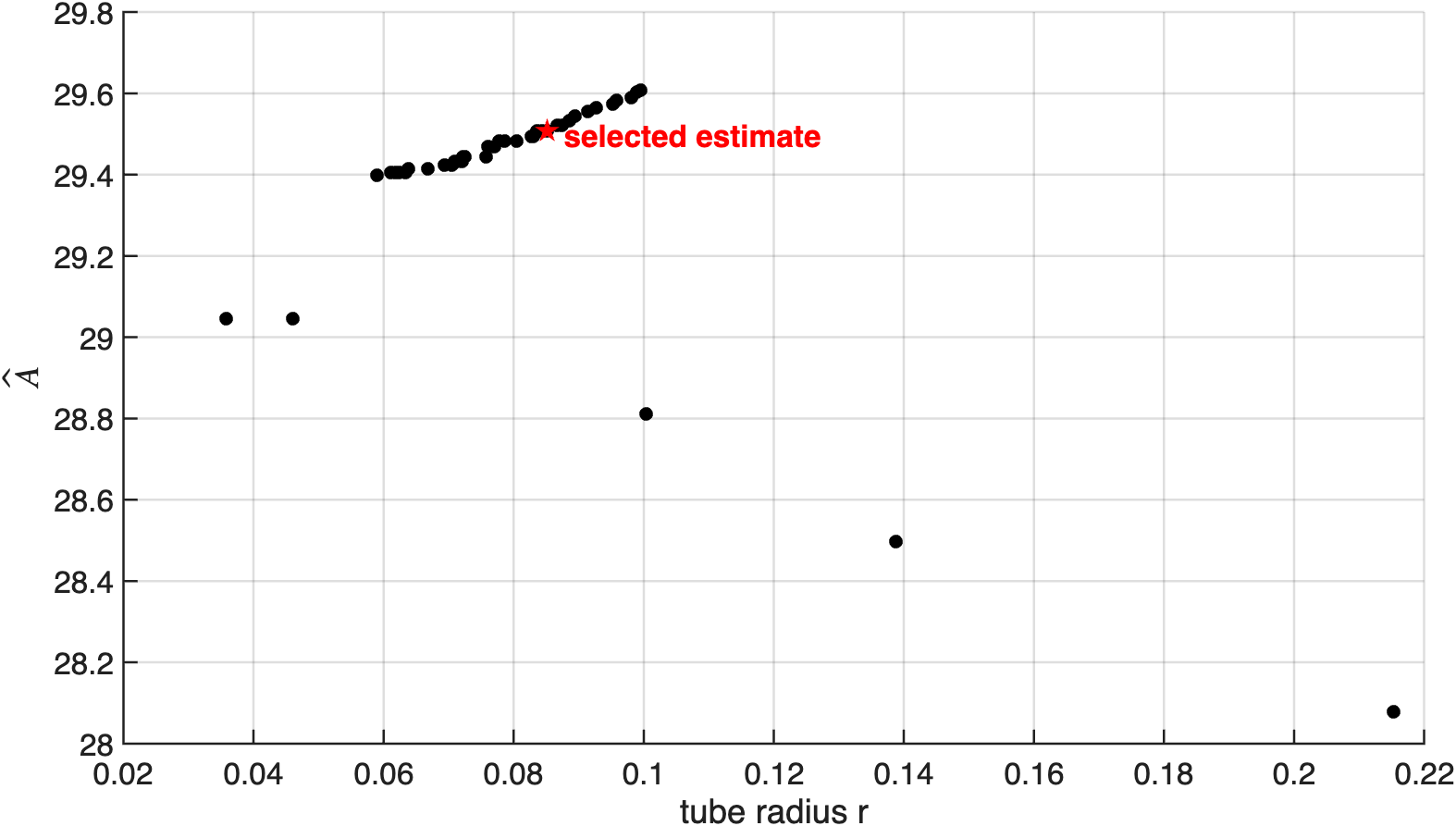} 
\end{center}
\end{minipage}
\caption{Pareto efficient combinations of tube lengths and radii around 2 for the curve of the mean length of monotone runs in the situation of Figure \ref{figAAMonFidyst} (left) and corresponding estimates for the diffusion constant $A$ in dependence on the tube radius (right).}
\label{figParetoFrontFidyst}
\end{figure}

Let us revisit the basic model
\begin{align*}
\text{d} \boldsymbol \xi_t &= \boldsymbol \tau (\alpha_t) \: \text{d}t + v \mathbf{e}_1 \: \text{d}t \\
\text{d} \alpha_t &= - \nabla V(\boldsymbol \xi_t) \cdot \boldsymbol \tau^\bot (\alpha_t) \: \text{d}t + A \: \text{d} W_t,
\end{align*}
with potential $V(\boldsymbol\xi) = \tfrac{1}{2} \boldsymbol\xi^\top \, \mathbf{C}^{-1} \boldsymbol\xi$ and $\mathbf{C} = \text{diag}(\sigma_1^2, \sigma_2^2)$. 
To estimate the throwing ranges $\sigma_1$ and $\sigma_2$, we use the idea of the existing Fraunhofer ITWM method. The estimation method discussed in \cite{Klar2009,Grothaus2014,Wegener2015} is based on the fact that the stationary distribution of $(\boldsymbol\xi, \alpha)$ is Gaussian with expectation $\mathbf{0}$ and covariance matrix $\mathbf{C}$. Obvious estimators for $\sigma_1$ and $\sigma_2$ are therefore given by the empirical standard deviations
\begin{equation*}
\sqrt{\frac{\sum_{n=0}^N {\xi_{n \Delta}^{(1)}}^2}{N+1}} \text{ and }
\sqrt{\frac{\sum_{n=0}^N {\xi_{n \Delta}^{(2)}}^2}{N+1}}.
\end{equation*}
In order to be able to motivate estimators via a stationary distribution also in the case of a moving conveyor belt, the current approach uses the large noise limit, i.e., $A \rightarrow \infty$, of the basic model. This results in the reduced model
\begin{equation*}
\text{d}\boldsymbol\xi_t = -A^{-2} \nabla V(\boldsymbol\xi_t) \, \text{d}t + v \mathbf{e}_1 \, \text{d}t + \sqrt{2} A^{-1} \, \text{d} \mathbf{W}_t,
\end{equation*}
where $\mathbf{W}$ is a two-dimensional standard Brownian Motion, for details see \cite{Bonilla2008}. It has an explicit stationary density  
\begin{equation*}
p_S(\boldsymbol\xi) \coloneq c \exp\left(-\frac{1}{2} \boldsymbol\xi^\top \mathbf{C}^{-1} \boldsymbol\xi + A^2 v \boldsymbol\xi^{(1)} \right),
\end{equation*}
where $c \in \R^+$ is the normalization constant. Using the identity
\begin{equation*}
-\frac{1}{2} \boldsymbol\xi^\top \mathbf{C}^{-1} \boldsymbol\xi + (\mathbf{C}^{-1} \boldsymbol\mu)^\top \boldsymbol\xi = -\frac{1}{2} (\boldsymbol\xi -\boldsymbol\mu)^\top \mathbf{C}^{-1} (\boldsymbol\xi- \boldsymbol\mu) + \frac{1}{2} \boldsymbol\mu^\top \mathbf{C}^{-1} \boldsymbol\mu,
\end{equation*}
where $\boldsymbol\mu \coloneq A^2 v \mathbf{C} \mathbf{e}_1$, it follows that
\begin{equation*}
p_S(\boldsymbol\xi) = \tilde{c} \exp\left( -\frac{1}{2} (\boldsymbol\xi -A^2 v \mathbf{C} \mathbf{e}_1)^\top \mathbf{C}^{-1} (\boldsymbol\xi-A^2 v \mathbf{C} \mathbf{e}_1) \right),
\end{equation*}
$\tilde{c} \in \R^+$. Thus, the stationary distribution of $\boldsymbol\xi$ from the large noise limit is Gaussian with expectation $A^2 v \mathbf{C} \mathbf{e}_1$ and covariance matrix $\mathbf{C}$. Assuming that the basic model is sufficiently close to the large noise limit, it therefore appears reasonable to center the $\boldsymbol\xi$ data and then, as in the case of the stationary conveyor belt, determine the empirical standard deviations of their components to estimate $\mathbf{C}$. Numerical experiments performed by us have shown that for a range covering all realistic combinations for $\sigma_1$, $\sigma_2$ and $A$, the stationary distribution of the basic model is well approximated by the stationary distribution of the large noise limit. For this reason, we ultimately use
\begin{equation}
\label{eqEstSigma}
\hat{\sigma}_1 \coloneq \sqrt{\frac{\sum_{n=0}^N  {(\xi_{n \Delta}^{(1)}-\bar{\xi}^{(1)}})^2}{N+1}} \text{ and }
\hat{\sigma}_2 \coloneq \sqrt{\frac{\sum_{n=0}^N  {(\xi_{n \Delta}^{(2)}-\bar{\xi}^{(2)}})^2}{N+1}},
\end{equation}
where $\bar{\xi}^{(i)} \coloneq \frac{1}{N} \sum_{n=1}^N \xi_{n \Delta}^{(i)}$, $i = 1,2$,
as estimators for the throwing ranges.

The results for a specific data set of a simulated fiber consisting of $182\,500$ data points with a step size of $\Delta = 2 \times 10^{-4}$, are shown in Figures \ref{figAAMonFidyst} and \ref{figParetoFrontFidyst}. The qualitative behavior of the curves for the mean length of monotone runs and the estimated values $\hat{A}$ of the diffusion constant is very similar to that of the curves in the simulated examples in Subsection \ref{SubsectionNumericalTests}. In particular, the curve of the mean length of monotone runs $L$ significantly exceeds the value 2 for small effective step sizes $k \Delta$ and therefore indicates the incompatibility of the available fiber data with the model. To find a suitable subsampling factor $k$ for which $L$ is close to 2, we use our method described in Algorithm \ref{AlgorithmFullProcedure} with $k_{\text{max}} = 100$. This yields the optimal subsampling factor $k^* = 34$ with a tube of length $l^* = 59$ and radius $r^* \approx 0.0851$ around 2, in which $L$ is located. The corresponding estimate of the diffusion constant is $\hat{A}^* = 29.5082$. Using \eqref{eqEstSigma}, the estimated throwing ranges in this example are $\hat{\sigma}_1 = 0.0843$ and $\hat{\sigma}_2 = 0.0452$.

\section{Conclusion and outlook}

A data-driven method for selecting a scale at which data and SDE model are compatible was presented and investigated in this paper. The analysis of monotone runs provides a suitable indicator for the rate at which the data should be subsampled to fit the model. This enables the identification of the diffusion constant of the SDE model assumed in this paper beyond specific multiscale settings and restrictive assumptions regarding practical application, such as decreasing step sizes. The method was demonstrated in a practical application using an example from the production of nonwoven textiles, in which fibers are simulated using an SDE model.

Only SDE models with additive noise were studied in this paper. It therefore remains an open question to what extent the results on the length of monotone runs on an infinitesimal scale, and thus also our method, can be applied to more general SDEs. Other open issues that were only addressed briefly in this paper include the use of alternative indicators or asymptotic confidence intervals (see Remark \ref{RemarkAlternativeIndicators}) and the modification of our method by using the extrema quadratic variation approach (see Remark \ref{RemarkExtQV}). 

\appendix

\section{Homogeneous Markov chains and ergodicity}
\label{SecAppendixErgodicity}

Let $(\mathbf{Z}_n)_{n \in \N_0}$ be an $\R^d$-valued homogeneous Markov chain on $(\Omega, \mathcal{A}, P)$ with Markov kernel $K$ and $\nu$ a probability measure on $\R^d$ such that $\mathbf{Z}_0 \sim \nu$. On the path space $((\R^d)^{\N_0}, \mathcal{B}(\R^d)^{\N_0})$ there exists a unique probability measure $\mathbbm{P}\!_{\nu}$ such that the coordinate process has the same finite-dimensional distributions as $(\mathbf{Z}_n)_{n \in \N_0}$, see, e.g., \cite[Theorem 3.1.2]{Douc2018}.

To distinguish between homogeneous Markov chains $(\mathbf{Z}_n)_{n \in \N_0}$ on $(\Omega, \mathcal{A}, P)$ with Markov kernel $K$ that differ only in their initial distributions, by $\mathbf{Z}^\nu: \Omega \rightarrow (\R^d)^{\N_0}$ we introduce the mapping $\omega \mapsto (\mathbf{Z}_n(\omega))_{n \in \N_0}$ corresponding to the initial distribution $\nu$, i.e., $\mathbf{Z}_0 \sim \nu$. Then it holds that $\mathbbm{P}\!_{\nu} = P_{\mathbf{Z}^\nu}$, see also, e.g., \cite[Theorem 1.3.4]{Douc2018}. The following lemma allows us to switch between initial distributions $\nu$ and $\delta_{\mathbf{z}}$, $\mathbf{z} \in \R^d$.

\begin{lemma}
\label{LemmaAlmostSurelyPathspace}
Let $B \in \mathcal{B}(\R^d)^{\N_0}$. Then it holds that $\mathbf{Z}^\nu \in B$ $P$-almost surely if and only if for $\nu$-almost all $\mathbf{z} \in \R^d$ it holds that $\mathbf{Z}^{\delta_{\mathbf{z}}} \in B$ $P$-almost surely.
\end{lemma}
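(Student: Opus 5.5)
The plan is to write both sides as integrals against the initial distribution and compare them pointwise. The key identity is the disintegration of the path-space law $\mathbbm{P}\!_{\nu}$ over the initial point:
\begin{equation*}
\mathbbm{P}\!_{\nu}(B) = \int_{\R^d} \mathbbm{P}\!_{\delta_{\mathbf{z}}}(B) \, \nu(\text{d}\mathbf{z}), \qquad B \in \mathcal{B}(\R^d)^{\N_0}.
\end{equation*}
We then use that $\mathbbm{P}\!_{\nu} = P_{\mathbf{Z}^\nu}$ and $\mathbbm{P}\!_{\delta_{\mathbf{z}}} = P_{\mathbf{Z}^{\delta_{\mathbf{z}}}}$. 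This turns the statement into the following claim: $\int (1-\mathbbm{P}\!_{\delta_{\mathbf{z}}}(B))\,\nu(\text{d}\mathbf{z}) = 0$ if and only if $\mathbbm{P}\!_{\delta_{\mathbf{z}}}(B)=1$ for $\nu$-a.e.\ $\mathbf{z}$.

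First, I will establish the identity for cylinder sets $B = B_0 \times \cdots \times B_n \times \R^d \times \cdots$. By the construction of $\mathbbm{P}\!_{\nu}$ through the Markov kernel (cf.\ \cite[Theorem 3.1.2]{Douc2018}),
\begin{equation*}
\mathbbm{P}\!_{\nu}(B) = \int_{B_0} \int_{B_1} \cdots \int_{B_n} K(\mathbf{z}_{n-1}, \text{d}\mathbf{z}_n) \cdots K(\mathbf{z}_0, \text{d}\mathbf{z}_1)\, \nu(\text{d}\mathbf{z}_0).
\end{equation*}
The inner iterated integral is exactly $\mathbbm{P}\!_{\delta_{\mathbf{z}_0}}(B)$, viewed as a function of $\mathbf{z}_0$. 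It is measurable in $\mathbf{z}_0$ by the measurability properties of Markov kernels, and $\mathbbm{P}\!_{\delta_{\mathbf{z}_0}}(B)$ includes the factor $\mathbbm{1}_{B_0}(\mathbf{z}_0)$.

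Next, I will extend to all $B \in \mathcal{B}(\R^d)^{\N_0}$ with a Dynkin ($\pi$–$\lambda$) argument. The cylinder sets form a $\pi$-system generating the product $\sigma$-algebra. The class of sets $B$ for which $\mathbf{z}\mapsto \mathbbm{P}\!_{\delta_{\mathbf{z}}}(B)$ is measurable and the identity holds is a $\lambda$-system: it is closed under complements and increasing unions by monotone convergence. This measurability and extension step is the only genuinely technical point. It is standard (it is essentially \cite[Theorem 3.1.2]{Douc2018}), and I expect no real obstacle there.

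Finally, set $g(\mathbf{z}) := 1-\mathbbm{P}\!_{\delta_{\mathbf{z}}}(B) \in [0,1]$. Then $\mathbf{Z}^\nu\in B$ $P$-a.s.\ is equivalent to $P_{\mathbf{Z}^\nu}(B)=1$, i.e.\ $\int g\,\text{d}\nu = 0$. Since $g\ge 0$, this holds if and only if $g=0$ $\nu$-a.e. That in turn means $P_{\mathbf{Z}^{\delta_{\mathbf{z}}}}(B)=1$, i.e.\ $\mathbf{Z}^{\delta_{\mathbf{z}}}\in B$ $P$-a.s., for $\nu$-almost all $\mathbf{z}$. Both directions of the equivalence follow at once.
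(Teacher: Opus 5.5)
Your proposal is correct and matches the paper's proof. Both rest on the disintegration $\mathbbm{P}\!_{\nu}(B)=\int_{\R^d}\mathbbm{P}\!_{\delta_{\mathbf{z}}}(B)\,\nu(\text{d}\mathbf{z})$ and on the fact that the nonnegative function $\mathbf{z}\mapsto 1-\mathbbm{P}\!_{\delta_{\mathbf{z}}}(B)$ has vanishing $\nu$-integral if and only if it vanishes $\nu$-a.e.; the only difference is that the paper cites \cite[Proposition 3.1.3 (ii)]{Douc2018} for the disintegration instead of rederiving it. If you do rederive it, check closure under countable \emph{disjoint} unions (by countable additivity and monotone convergence), not just under complements and increasing unions, since those two properties alone do not make a Dynkin system.
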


\begin{proof}
By definition of $\mathbbm{P}\!_{\nu}$, $\mathbf{Z}^\nu \in B$ $P$-almost surely is equivalent to $\mathbbm{P}\!_{\nu}(B) = 1$.
Using \cite[Proposition 3.1.3 (ii)]{Douc2018}, we get
\begin{equation*}
1 = \mathbbm{P}\!_{\nu}(B) = \int_{\R^d} \mathbbm{P}\!_{\delta_{\mathbf{z}}}(B) \, \nu(\text{d}\mathbf{z}).
\end{equation*}
This is equivalent to
\begin{equation*}
\int_{\R^d} 1 - \mathbbm{P}\!_{\delta_{\mathbf{z}}}(B) \, \nu(\text{d}\mathbf{z}) = 1 - \int_{\R^d} \mathbbm{P}\!_{\delta_{\mathbf{z}}}(B) \, \nu(\text{d}\mathbf{z}) = 0.
\end{equation*}
Since $1 - \mathbbm{P}\!_{\delta_{\mathbf{z}}}(B) \in [0,1]$ for all $\mathbf{z} \in \R^d$, this is true if and only if $1 - \mathbbm{P}\!_{\delta_{\mathbf{z}}}(B) = 0$, i.e., $\mathbbm{P}\!_{\delta_{\mathbf{z}}}(B) = 1$, for $\nu$-almost all $\mathbf{z} \in \R^d$. By definition of $\mathbbm{P}\!_{\delta_{\mathbf{z}}}$, this is equivalent to $\mathbf{Z}^{\delta_{\mathbf{z}}} \in B$ $P$-almost surely for $\nu$-almost all $\mathbf{z} \in \R^d$.
\end{proof}

An established concept of ergodicity is based on the definition of invariant sets. For our purposes, it suffices to consider the special case of stationary processes, which is discussed, for example, in \cite{Krengel1985}. For the more general situation of dynamic systems, see, for example, \cite{Douc2018}.

\begin{definition}[Invariant sets and ergodicity]
Let $\mathbf{Z}=(\mathbf{Z}_n)_{n \in \N_0}$ be a stationary $\R^d$-valued process.
A set $A \in \mathcal{A}$ is called invariant w.r.t. $\mathbf{Z}$ if there exists some $B \in \mathcal{B}(\R^d)^{\N_0}$ such that
\begin{equation*}
A = \lbrace (\mathbf{Z}_j, \mathbf{Z}_{j+1}, \ldots) \in B \rbrace
\end{equation*}
is true for all $j \in \N_0$. Furthermore, $\mathbf{Z}$ is called ergodic if any invariant set $A \in \mathcal{A}$ satisfies $P(A) \in \lbrace 0,1 \rbrace$.
\end{definition}

Similarly to the situation of solution processes to an SDE, we call a probability measure $\mu$ an invariant measure for a Markov kernel if the associated Markov chain with initial distribution $\mu$ is stationary. The following lemma gives an equivalent characterization of ergodicity of homogeneous Markov chains.

\begin{lemma}
\label{LemmaErgodicity}
Let $K$ be a Markov kernel on $(\R^d, \mathcal{B}(\R^d))$ with invariant measure $\mu$ and let $\mathbf{Z} = (\mathbf{Z}_n)_{n \in \N_0}$ be a homogeneous Markov chain with kernel $K$ and initial distribution $\mathbf{Z}_0 \sim \mu$. Then $\mathbf{Z}$ is ergodic if and only if
\begin{equation}
\label{eqLemmaErgodicity}
\lim_{N \rightarrow \infty} \frac{1}{N} \sum_{n=0}^{N-1} \mathbbm{1}_B(\mathbf{Z}_n) = \mu(B) \quad \text{in probability} \quad \text{for all } B \in \mathcal{B}(\R^d).
\end{equation}
\end{lemma}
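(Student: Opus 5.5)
The plan is to identify both conditions with statements about the shift-invariant $\sigma$-algebra on path space, and then use the Birkhoff ergodic theorem in one direction and a Markov-chain characterization of invariant sets in the other. Throughout, $\mathbf{Z}$ is stationary because $\mu$ is invariant and $\mathbf{Z}_0 \sim \mu$.

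\emph{Ergodic implies \eqref{eqLemmaErgodicity}.} Fix $B \in \mathcal{B}(\R^d)$ and apply Birkhoff's theorem to the stationary ergodic sequence $(\mathbbm{1}_B(\mathbf{Z}_n))_{n\in\N_0}$, using \cite[Theorem 4.4]{Krengel1985} as in the proof of Lemma~\ref{LemmaFiniteOccurrenceTimes}. This gives almost sure convergence of the empirical averages to $P(\mathbf{Z}_0\in B)=\mu(B)$. Almost sure convergence implies convergence in probability, which is \eqref{eqLemmaErgodicity}.

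\emph{\eqref{eqLemmaErgodicity} implies ergodic.} Let $A=\{(\mathbf{Z}_j,\mathbf{Z}_{j+1},\ldots)\in C\}$ be invariant for all $j$. The steps are as follows.
\begin{itemize}
\item[(i)] Set $h(\mathbf{z}) := \mathbbm{P}\!_{\delta_{\mathbf{z}}}(C)$. By the Markov property and invariance, $P(A\mid \mathbf{Z}_0,\ldots,\mathbf{Z}_n) = h(\mathbf{Z}_n)$ almost surely.
\item[(ii)] By Lévy's martingale convergence theorem, $h(\mathbf{Z}_n)\to\mathbbm{1}_A$ almost surely, since $A$ is measurable with respect to $\sigma(\mathbf{Z}_k, k\ge0)$.
\item[(iii)] Put $D := \{h > 1/2\}$. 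Step (ii) gives $\mathbbm{1}_D(\mathbf{Z}_n)\to\mathbbm{1}_A$ almost surely. Hence the Cesàro averages $\frac1N\sum_{n<N}\mathbbm{1}_D(\mathbf{Z}_n)$ also converge almost surely to $\mathbbm{1}_A$.
\item[(iv)] By \eqref{eqLemmaErgodicity} the same averages converge in probability to the constant $\mu(D)$. Uniqueness of limits in probability gives $\mathbbm{1}_A = \mu(D)$ almost surely. A $\{0,1\}$-valued random variable that is almost surely constant forces $P(A)\in\{0,1\}$.
\end{itemize}

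The main obstacle is step (i): the identity $P(A\mid \mathcal{F}_n)=h(\mathbf{Z}_n)$. I would prove it on path space. The invariance $A=\{\theta^n \mathbf{Z}\in C\}$, where $\theta$ is the shift, together with the Markov property in the form \cite[Proposition 3.1.3]{Douc2018}, gives $P(\theta^n\mathbf{Z}\in C\mid\mathcal{F}_n)=\mathbbm{P}\!_{\delta_{\mathbf{Z}_n}}(C)$. Some care is needed because invariance holds only as an equality of events in $\Omega$, not of subsets of path space. I would handle this by transporting everything to $\mathbbm{P}\!_\mu$ on path space via $\mathbbm{P}\!_\mu=P_{\mathbf{Z}^\mu}$, and noting that $C$ and $\theta^{-n}C$ then agree up to $\mathbbm{P}\!_\mu$-null sets. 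This is enough, since the conditional probabilities are only determined almost surely.
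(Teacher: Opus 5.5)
Your proof is correct. The forward direction is the same as the paper's: Birkhoff's theorem for the ergodic factor $(\mathbbm{1}_B(\mathbf{Z}_n))_n$, then almost sure convergence implies convergence in probability. Your converse, however, follows a genuinely different route.

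The paper first combines the hypothesis with Birkhoff's theorem to upgrade it to almost sure convergence. It then passes to path space with a strictly shift-invariant representative $B^*$ of $A$, and invokes \cite[Proposition 5.2.2 (iii)]{Douc2018} to obtain $\mathbbm{1}_{B^*}=\mathbbm{1}_{E}(\pi_0)$ $\mathbbm{P}_\mu$-a.s.\ for the Borel set $E=\{\mathbf{z}\in\R^d:\mathbbm{P}_{\delta_{\mathbf{z}}}(B^*)=1\}$ (called $C$ in the paper). Stationarity then gives $\mathbbm{1}_{B^*}=\mathbbm{1}_E(\pi_n)$ for every $n$. So the Cesàro averages are identically $\mathbbm{1}_{B^*}$, and they must equal $\mu(E)$.

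You only need $h(\mathbf{Z}_n)\to\mathbbm{1}_A$, which Lévy's upward theorem gives. The threshold set $D=\{h>1/2\}$ lets you avoid two facts the paper relies on: that $h$ is $\{0,1\}$-valued $\mu$-a.s., and that $h(\mathbf{Z}_n)$ does not depend on $n$. Comparing an almost sure limit with an in-probability limit also makes the upgrade step unnecessary.

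Your argument is therefore more elementary and self-contained. It uses only the Markov property and martingale convergence, with no structural theorem on invariant random variables and no strictly invariant $B^*$. It also uses stationarity only through the definition of ergodicity. The paper's argument yields a stronger structural statement: every invariant event coincides almost surely with $\{\mathbf{Z}_n\in E\}$ for all $n$.

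The extra care you anticipate in step (i) is not needed. With the paper's definition, the same set $C$ works for every shift, so $A=\{(\mathbf{Z}_n,\mathbf{Z}_{n+1},\ldots)\in C\}$ holds exactly as an identity of events in $\Omega$. The Markov property on $\Omega$ then applies directly, without passing to null sets on path space.
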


\begin{proof}
Let $\mathbf{Z}_0 \sim \mu$ and let $B \in \mathcal{B}(\R^d)$ be arbitrary but fixed. Since $\mathbf{Z}$ is stationary by assumption, $(\mathbbm{1}_B(\mathbf{Z}_n))_{n \in \N_0}$ is also stationary, see \cite[Corollary 4.2]{Krengel1985}. By \cite[Theorem 4.4]{Krengel1985},
\begin{equation*}
\lim_{N \rightarrow \infty} \frac{1}{N} \sum_{n=0}^{N-1} \mathbbm{1}_B(\mathbf{Z}_n) = P (\mathbf{Z}_0 \in B \, \vert \, \mathcal{J}) \quad P\text{-a.s.},
\end{equation*}
where $\mathcal{J}$ denotes the $\sigma$-algebra of invariant events of $(\mathbbm{1}_B(\mathbf{Z}_n))_{n \in \N_0}$. In particular,
\begin{equation}
\label{eqProofErgodicityInProb}
\lim_{N \rightarrow \infty} \frac{1}{N} \sum_{n=0}^{N-1} \mathbbm{1}_B(\mathbf{Z}_n) = P (\mathbf{Z}_0 \in B \, \vert \, \mathcal{J}) \quad \text{in probability}.
\end{equation}

Now, if $\mathbf{Z}$ is ergodic, then $(\mathbbm{1}_B(\mathbf{Z}_n))_{n \in \N_0}$ is also ergodic by \cite[Proposition 4.3]{Krengel1985} Thus, $P (\mathbf{Z}_0 \in B \, \vert \, \mathcal{J}) = P (\mathbf{Z}_0 \in B)$, which together with \eqref{eqProofErgodicityInProb} yields the first implication.

To show the converse, note that by \eqref{eqLemmaErgodicity} and \eqref{eqProofErgodicityInProb}, the uniqueness of the limit of convergence in probability implies $P (\mathbf{Z}_0 \in B \, \vert \, \mathcal{J}) = \mu(B)$ $P$-almost surely. Thus,
\begin{equation*}
\lim_{N \rightarrow \infty} \frac{1}{N} \sum_{n=0}^{N-1} \mathbbm{1}_B(\mathbf{Z}_n) = \mu(B) \quad P\text{-a.s.}
\end{equation*}
Let $\pi_n$ denote the $n$-th canonical coordinate mapping on $((\R^d)^{\N_0}, \mathcal{B}(\R^d)^{\N_0})$. Since $\mathbf{Z}_n(\omega) = \pi_n(\mathbf{Z}(\omega))$,
\begin{align*}
&\left\lbrace \omega \in \Omega: \lim_{N \rightarrow \infty} \frac{1}{N} \sum_{n=0}^{N-1} \mathbbm{1}_{B}(\mathbf{Z}_n(\omega)) = \mu(B) \right\rbrace \\
&\quad = \mathbf{Z}^{-1} \left( \left\lbrace \mathbf{z} \in (\R^d)^{\N_0}: \lim_{N \rightarrow \infty} \frac{1}{N} \sum_{n=0}^{N-1} \mathbbm{1}_{B}(\pi_n(\mathbf{z})) = \mu(B) \right\rbrace \right).
\end{align*}
Thus, \eqref{eqLemmaErgodicity} translates to
\begin{equation}
\label{eqLemmaErgodicityUpdated}
\lim_{N \rightarrow \infty} \frac{1}{N} \sum_{n=0}^{N-1} \mathbbm{1}_B(\pi_n) = \mu(B) \quad P_{\mathbf{Z}} \text{-a.s.} \quad \text{for all } B \in \mathcal{B}(\R^d).
\end{equation}
For a probability measure $\nu$ on $\R^d$ let $\mathbbm{P}\!_{\nu}$ denote the unique probability measure on the path space $((\R^d)^{\N_0}, \mathcal{B}(\R^d)^{\N_0})$ such that $(\pi_n)_{n \in \N_0}$ has the same finite-dimensional distributions as $\mathbf{Z}$ when $\mathbf{Z}_0 \sim \nu$. 
Now let $A \in \mathcal{A}$ be an invariant set for $\mathbf{Z}$. Then there exists a shift invariant set $B^* \in \mathcal{B}(\R^d)^{\N_0}$ such that $A = \mathbf{Z}^{-1}(B^*)$, i.e., $\mathbbm{1}_A = \mathbbm{1}_{B^*}(\mathbf{Z})$, see \cite[p.\,26]{Krengel1985}. Set $C = \lbrace \mathbf{z} \in \R^d: \mathbbm{P}\!_{\delta_{\mathbf{z}}}(B^*) = 1 \rbrace$. Since $\mathbbm{1}_{B^*}$ is a shift invariant random variable, \cite[Proposition 5.2.2 (iii)]{Douc2018} gives $\mathbbm{1}_{B^*} = \mathbbm{P}\!_{\delta_{\pi_0}}(B^*)$ $\mathbbm{P}\!_\mu$-a.s., so $\mathbbm{1}_{B^*} = \mathbbm{1}_C(\pi_0)$ $\mathbbm{P}\!_\mu$-a.s.\ by construction of $C$. Since under $\mathbbm{P}\!_\mu$ the process $(\pi_n)_{n \in \N_0}$ is stationary, the same arguments give $\mathbbm{1}_{B^*} = \mathbbm{1}_C(\pi_n) \: \mathbbm{P}\!_\mu$-a.s.\ for all $n \in \N_0$. Thus by \eqref{eqLemmaErgodicityUpdated},
\begin{equation*}
\mathbbm{1}_{B^*} = \frac{1}{N} \sum_{n=0}^{N-1} \mathbbm{1}_C(\pi_n) \xrightarrow{N \rightarrow \infty} \mu(C) \quad \mathbbm{P}\!_\mu \text{-a.s.}
\end{equation*}
and in particular
\begin{equation*}
\mathbbm{P}\!_\mu(B^*) = \mathbbm{E}_\mu(\mathbbm{1}_{B^*}) = \mathbbm{E}_\mu (\mu(C)) = \mu(C) \in \lbrace 0,1 \rbrace,
\end{equation*}
where $\mathbbm{E}_\mu$ denotes the expected value w.r.t. $\mathbbm{P}\!_\mu$.
Finally,
\begin{equation*}
P(A) = P(\mathbf{Z}^{-1}(B^*)) = P_{\mathbf{Z}}(B^*) = \mathbbm{P}\!_\mu(B^*) \in \lbrace 0,1 \rbrace,
\end{equation*}
i.e., $\mathbf{Z}$ is ergodic since $A$ was an arbitrary invariant set.
\end{proof}

For a Markov chain being the skeleton of a solution process $\mathbf{X}$ in the sense of Assumption \ref{Assumptions}, a sufficient condition for its ergodicity can be formulated based on the irreducibility of the chain, for details on this notion see, e.g., \cite[Chapter 9]{Douc2018}.

\begin{lemma}
\label{LemmaIrreducibility}
Suppose that Assumption \ref{Assumptions} holds, that there exists an invariant measure $\mu$ for \eqref{eqGenSDEIntro}, and let $\mathbf{X_0} \sim \mu$ be an initial random variable yielding the corresponding solution process $\mathbf{X}$ with $d$-th component $X$. Furthermore, let $(\mathbf{X}_{n \delta})_{n \in \N_0}$, $\delta > 0$, be irreducible. Then $(\mathbf{X}_{n \delta})_{n \in \N_0}$, $\delta > 0$ is ergodic.
\end{lemma}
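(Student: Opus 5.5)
The plan is to reduce ergodicity of the stationary skeleton chain to the standard fact that an irreducible Markov kernel admits at most one invariant probability measure, and that a unique invariant measure is extremal in the convex set of invariant probabilities. Fix $\delta > 0$ and let $K_\delta$ denote the transition kernel of the skeleton $(\mathbf{X}_{n\delta})_{n \in \N_0}$. This is a time-homogeneous Markov chain by the Markov property of the Itô diffusion $\mathbf{X}$ used in the proof of Lemma~\ref{Lemma1stResult}. Since $\mu$ is invariant for \eqref{eqGenSDEIntro}, $\mathbf{X}_0 \sim \mu$ makes $\mathbf{X}$ stationary, so $\mu$ is invariant for $K_\delta$ and the skeleton with $\mathbf{X}_0 \sim \mu$ is stationary. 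This puts us exactly in the setting of the definition of ergodicity above.

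First, I will invoke the result from \cite[Chapter 9]{Douc2018} (or Meyn--Tweedie) that an irreducible kernel possessing an invariant probability measure is positive and that this invariant probability is unique. Second, I will show that uniqueness of the invariant probability implies ergodicity of the stationary chain under $\mathbbm{P}\!_\mu$. Let $A = \{(\mathbf{Z}_j,\mathbf{Z}_{j+1},\ldots)\in B^*\}$ be an invariant set. As in the proof of Lemma~\ref{LemmaErgodicity}, \cite[Proposition 5.2.2]{Douc2018} gives $\mathbbm{1}_{B^*} = \mathbbm{1}_C(\pi_n)$ $\mathbbm{P}\!_\mu$-a.s. for all $n$, with $C = \{\mathbf{z}: \mathbbm{P}\!_{\delta_{\mathbf{z}}}(B^*) = 1\}$. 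Hence $C$ is absorbing modulo $\mu$, meaning $K_\delta(\mathbf{z}, C) = 1$ for $\mu$-a.e. $\mathbf{z}\in C$ and $K_\delta(\mathbf{z},C)=0$ for $\mu$-a.e. $\mathbf{z}\notin C$.

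Suppose $0 < \mu(C) < 1$. Then the restrictions $\mu(\cdot \cap C)/\mu(C)$ and $\mu(\cdot \cap C^c)/\mu(C^c)$ are two distinct invariant probability measures, contradicting uniqueness. Therefore $\mu(C)\in\{0,1\}$, and so $P(A) = \mathbbm{P}\!_\mu(B^*) = \mu(C) \in \{0,1\}$. Equivalently, one could verify \eqref{eqErgoCond} and apply Lemma~\ref{LemmaErgodicity}, but the direct invariant-set argument is shorter. Because $\delta>0$ was arbitrary, this gives the claim for all $\delta$.

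The main obstacle is the uniqueness step, and in particular checking that the irreducibility notion of \cite{Douc2018} yields uniqueness without extra regularity such as the Feller property or aperiodicity. I would first try the argument directly. If $\mu_1 \perp \mu_2$ were two invariant probabilities, their supports would give disjoint absorbing sets. The irreducibility measure $\phi$ charges at least one of them; the irreducible chain started in the other must then reach the first with positive probability, contradicting absorption. Handling the ``modulo null sets'' issues needs some care, and the cleanest way is to pass to the maximal absorbing sets provided by \cite[Chapter 9]{Douc2018}.
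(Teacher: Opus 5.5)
Your proposal is correct and follows the paper's argument: irreducibility yields uniqueness of the invariant probability (the paper cites \cite[Corollary 9.2.16]{Douc2018}), and uniqueness yields ergodicity. The only difference is that you prove the second step by hand, by restricting $\mu$ to the $\mu$-a.s.\ absorbing set $C$ from the proof of Lemma~\ref{LemmaErgodicity} and its complement, whereas the paper simply cites \cite[Theorem 5.2.6]{Douc2018} for this step.
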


\begin{proof}
Since $\mathbf{X}$ is a time-homogeneous Itô diffusion, $(\mathbf{X}_{n \delta})_{n \in \N_0}$ is a Markov chain. The irreducibility implies the uniqueness of the invariant measure, see \cite[Corollary 9.2.16]{Douc2018}, which is sufficient for the ergodicity $(\mathbf{X}_{n \delta})_{n \in \N_0}$, see \cite[Theorem 5.2.6]{Douc2018}.
\end{proof}

\bibliographystyle{alpha}
\bibliography{references}
\end{document}